\newif\ifworkinprogress
  \newcommand{\RM}[1]{\textcolor{blue}{\textbf{[RM] #1}}}
  \newcommand{\HW}[1]{\textcolor{green}{\textbf{[HW] #1}}} 		  
  \newcommand{\SL}[1]{\textcolor{red}{\textbf{[SL] #1}}}
  \newcommand{\RM}[1]{}
  \newcommand{\HW}[1]{}
  \newcommand{\SL}[1]{}
\newcommand{\R}{\mathbb{R}}
\newcommand{\T}{\top}
\newcommand{\la}{\langle}
\newcommand{\ra}{\rangle}
\newcommand{\dom}{{\rm dom}}
\newcommand{\rminf}{\mathop{{\rm inf}}}
\newcommand{\rmsup}{\mathop{{\rm sup}}}
\newcommand{\lam}{\lambda}
\newcommand{\dt}{\delta}
\newcommand{\Dt}{\Delta}
\newcommand{\ran}{{\rm range}}
\newcommand{\al}{\alpha}
\newcommand{\ga}{\gamma}
\newcommand{\ep}{\epsilon}
\newcommand{\na}{\nabla}
\newcommand{\lt}{\left}
\newcommand{\rt}{\right}
\newcommand{\argmax}{\mathop{{\rm argmax}}}
\newcommand{\argmin}{\mathop{{\rm argmin}}}
\newcommand{\spn}{{\rm span}}
\newcommand{\bsone}{\boldsymbol{1}}
\newcommand{\diam}{{\rm diam}}
\newcommand{\stt}{{\rm s.t.}}
\newcommand{\td}{\tilde}
\newcommand{\wtd}{\widetilde}
\newcommand{\mc}{\mathcal}
\newcommand{\Om}{\Omega}
\newcommand{\cE}{\mathcal{E}}
\newcommand{\cN}{\mathcal{N}}
\newcommand{\cP}{\mathcal{P}}
\newtheorem{theorem}{Theorem}[section]
\newtheorem{lemma}[theorem]{Lemma}
\newtheorem{proposition}[theorem]{Proposition}
\newtheorem{definition}{Definition}[section]
\newtheorem{remark}[theorem]{Remark}
\newtheorem{assumption}[theorem]{Assumption}
\begin{document}
\title{Frank-Wolfe Methods with an Unbounded Feasible Region and Applications to Structured Learning} 
\author{Haoyue Wang\thanks{MIT Operations Research Center (email: haoyuew@mit.edu).} \and
Haihao Lu\thanks{Booth School of Business, University of Chicago
({email:  haihao.lu@chicagobooth.edu}).}
\and
Rahul Mazumder\thanks{MIT Sloan School of Management, Operations Research Center and MIT Center for Statistics ({email: rahulmaz@mit.edu}).
This research was partially supported by awards from
the Office of Naval Research ONR-N000141812298 (Young Investigator Award), the National Science Foundation (NSF-IIS-1718258), MIT-IBM Watson
AI Lab to Rahul Mazumder. 
}}
\date{}
\maketitle

\begin{abstract}
    The Frank-Wolfe (FW) method is a popular algorithm for solving  large-scale convex optimization problems appearing in structured statistical learning. 
    However, the traditional Frank-Wolfe method can only be applied when the feasible region is bounded, which limits its applicability in practice. Motivated by two applications in statistical  learning, the $\ell_1$ trend filtering problem and matrix optimization problems with generalized nuclear norm constraints, we study a family of convex optimization problems where the unbounded feasible region is the direct sum of an unbounded linear subspace and a bounded constraint set. We propose two 
    new Frank-Wolfe methods: unbounded Frank-Wolfe method (uFW) and unbounded Away-Step Frank-Wolfe method (uAFW), for solving a family of convex optimization problems with this class of unbounded feasible regions. We show that under proper regularity conditions, the unbounded Frank-Wolfe method has a $O(1/k)$ sublinear convergence rate, and unbounded Away-Step Frank-Wolfe method has a linear convergence rate, matching the best-known results for the Frank-Wolfe method when the feasible region is bounded. Furthermore, computational experiments indicate that our proposed methods appear to outperform alternative solvers. 
\end{abstract}

\section{Introduction}\label{sec:intro}

The Frank-Wolfe (FW) method \cite{FW-origin}, also known as the conditional gradient method \cite{Demyanov1970approximate}, is a well-studied first-order algorithm for smooth convex optimization with a bounded feasible region. 
Compared to other first-order methods, such as projected gradient methods and proximal type methods \cite{Parikh2014proximal}, where a projection operation onto the feasible set is required at every iteration, the Frank-Wolfe method avoids projection by minimizing a linear objective over the feasible set. Solving this problem is often computationally more attractive than a projection step in several large-scale problems arising in machine learning. For instance, when the constraint set $S$ is polyhedral, the linear subproblem is given by a linear program, which may be a computationally friendlier alternative  compared to 
solving a convex quadratic program in the projection step. When the constraint set $S$ is a nuclear norm ball, the solution to the linear subproblem can be obtained by computing the leading singular vector/value pair, while the projection onto $S$ may require computing several leading singular vectors/values. Due to its computational efficiency and projection-free nature, the Frank-Wolfe method has emerged as a popular choice for solving large-scale convex optimization problems arising in machine learning applications~\cite{jaggi2013revisiting}.

In many real-world applications however, the feasible region of the optimization problem may be unbounded, which limits the applicability of Frank-Wolfe methods. We present two motivating examples from statistical  learning/high-dimensional statistics.
In the generalized lasso problem \cite{gen-lasso}, the problem of interest is to solve
\begin{equation}\label{intro:generalized-lasso}
	\min_{x\in \R^n} \|b - A x\|_2^2 \ ~~~\stt~~~ \|Hx\|_1 \le \delta \ ,
\end{equation}
where $x\in \R^n$ are the model coefficients or signal of interest (decision variable), $A\in \R^{N\times n}$ is the model matrix, $b\in \R^N$ is the response,  and $H\in \R^{m\times n}$ is a general (usually non-square or singular) matrix imposing additional structure on the unknown signal $x$. A special case is the  $\ell_1$ trend filtering problem \cite{fused-lasso,Ryan-trend-filter,Boyd-trend-filter}, where the matrix $H$ is the $r$-th order discrete derivative matrix with $r\ge 1$.
In generalized nuclear norm regularization problems \cite{Fithian2018fexible}, we are interested in the following problem
\begin{equation}\label{intro:generalized-nuclear-norm}
	\min_{X\in \R^{m\times n}} f(X) ~~\stt~~ \|PXQ\|_* \le \delta \ 
\end{equation}
where, the objective $f$ is a smooth convex function of the matrix variable $X\in \R^{m\times n}$, i.e., the gradient $\na f$ is Lipschitz continuous; $\|\cdot\|_*$ denotes the nuclear norm (i.e. the sum of singular values) of a matrix, and $P,Q$ are general (usually singular) matrices.  In example~\eqref{intro:generalized-nuclear-norm}, the feasible region is unbounded when matrices $P$ and $ Q$ are not full rank. Thus, the traditional Frank-Wolfe method is no longer applicable for these problems, even though the Frank-Wolfe method is known to work well for optimization problems involving the 
$\ell_1$ norm ball or nuclear norm ball constraints \cite{hazan2008sparse,Freund-inface}.

Notice that in Problems \eqref{intro:generalized-lasso} and \eqref{intro:generalized-nuclear-norm},
the feasible regions can be expressed as the direct sum of a linear subspace $T$ and a bounded set $S$. For example, the constraint set $\{x~|~\|Hx\|_1\le \delta\}$ in \eqref{intro:generalized-lasso} can be formulated as $T \oplus S:=\{x~|~x=s+t, t\in T, s\in S\}$, where $T= \ker(H)$ is a linear subspace, and $S=\{x\in \R^n~|~\|Hx\|_1 \le \delta, x \in (\ker(H) )^{\perp}\}$ is a bounded set in $\R^n$. The constraint $\{X~|~\|PXQ\|_*\le \delta\}$ in \eqref{intro:generalized-nuclear-norm} can also be expressed as $T \oplus S$ with a linear subspace $T$ and a bounded set $S$ (see Section \ref{subsection:generalized-nuclear-norm} for details). 
Thus motivated, in this paper, we study the following smooth constrained convex optimization problem over an unbounded feasible region $T \oplus S$:
\begin{equation}\label{problem1}
	\min_{x\in \R^n} f(x)~~~\stt~~~x \in T \oplus S
\end{equation}
where $f$ is a smooth convex function on $\R^n$, $T$ is a linear subspace of $\R^n$, $S$ is a bounded convex set ($S\subset T^{\perp}$), and $\oplus$ is the direct sum of two orthogonal spaces.

The major contribution of this paper is to generalize the traditional Frank-Wolfe method to solve~\eqref{problem1} with computational guarantees. In Section~\ref{sec:algorithm}, we present two new algorithms designed for \eqref{problem1}: {the unbounded Frank-Wolfe Method (uFW)} and {the unbounded Away-Step Frank-Wolfe Method (uAFW)}. We introduce new curvature constants generalizing those arising in the analysis of Frank-Wolfe methods over a bounded feasible region.
Our key idea is to alternate between a Frank-Wolfe step along the bounded set $S$ where we solve a linear subproblem; and a gradient descent step along the unbounded linear subspace $T$ which admits a simple projection operation. In Section~\ref{section: computational guarantees}, we show that under suitable conditions, 
uFW converges to an optimal solution of~\eqref{problem1} with a sublinear-rate $O(1/k)$, and 
uAFW converges to an optimal solution of~\eqref{problem1} with a linear rate. In Section~\ref{section: applications}, we discuss how to apply our proposed algorithms to 
Problems~\eqref{intro:generalized-lasso} and \eqref{intro:generalized-nuclear-norm},
focusing on how to make the computations efficient (by exploiting problem structure). Section~\ref{section: experiments} presents numerical experiments suggesting that our proposal outperforms alternatives by a significant margin.

\subsection{Related literature}\label{subsection: intro-related works}
As mentioned earlier, the Frank-Wolfe method has been extensively studied in the optimization community. The original Frank-Wolfe method, dating back to Frank and Wolfe~\cite{FW-origin},  was designed to solve a smooth convex optimization problem over a polytope. The method was then extended to more general settings with a convex bounded feasible region, and was shown to attain an $O(1/k)$ sublinear rate of convergence~\cite{dunn1978conditional,dunn1979rates,polyak1987introduction}.
Indeed, the $O(1/k)$ sublinear rate matches the lower complexity bound for solving a generic constrained convex optimization problem~\cite{lan2013complexity}. Recently, due to its projection-free nature, there has been renewed interest in the Frank-Wolfe method for solving large-scale optimization problems arising in structured statistical learning. Compared to other first-order methods (e.g., proximal gradient), which usually require us to  compute a projection onto the constraint set (at every iteration), the Frank-Wolfe method solves a linear subproblem, which can be simpler than the projection problem in many applications~\cite{jaggi2013revisiting, harchaoui2015conditional, hazan2008sparse, clarkson2010coresets}. 

Recent works have explored different properties of the FW method and its variants.
For example,~\cite{jaggi2013revisiting} studies the invariance of the Frank-Wolfe method under linear transformations, and introduces a curvature constant that improves the complexity analysis; 
\cite{lan2016conditional} proposes gradient sliding schemes which achieve the lower complexity bound on both the number of linear optimization oracle calls as well as gradient computations; 
\cite{bomze2019first} proves support identification in finite time of two variants of the FW method on the standard simplex, and later work \cite{bomze2020active} develops complexity bounds for support identification.
See for example, \cite[Chapter 7]{lan2020first} for a nice overview on the recent developments of the Frank-Wolfe method.

	Recently, \cite{gonccalves2020projection} proposes an interesting Frank-Wolfe type method to address problems with unbounded constraint sets, but their approach differs from what we propose. \cite{gonccalves2020projection} use accelerated gradient descent as their base algorithm and use FW to approximately solve the projection step. 
	This is a two-loop algorithm, which behaves similar to accelerated gradient descent.
	In contrast, our method is a single-loop algorithm, and resembles a Frank-Wolfe method.
	To deal with the unbounded constraint in each subproblem, \cite{gonccalves2020projection} performs a Frank-Wolfe step on the intersection of $S\oplus T$ and an Euclidean ball centered at the initial point. 
	This paper reports complexities of $O(1/\sqrt{\ep})$ for the number of gradient evaluations and $O(1/\ep)$ for the number of linear oracles in order to find a solution with primal optimality gap $\ep$. 
	Although their complexity bound on the number of gradient evaluations is better, each gradient evaluation is accompanied by solving many linear oracles. While the number of linear oracles is the same rate as ours, the linear oracles are different (due to the presence of an additional ball constraint) and can be harder to solve than the ones arising in our FW procedure.
	In addition, for the setting with a strongly convex objective function and polyhedral constraints, we prove linear convergence results---similar results are not discussed in~\cite{gonccalves2020projection}. 
	Our numerical experiments (cf. Section \ref{section: experiments}) appear to suggest that our proposal is computationally more attractive compared to the proposal of~\cite{gonccalves2020projection}.

The Away-step Frank-Wolfe method (AFW) is a variant of FW first proposed by Wolfe in 1970s~\cite{wolfe1970convergence}, that attempts to ameliorate the so-called zigzagging behavior of the vanilla FW method. 
Later,~\cite{guelat1986some} proposes a modified AFW that has a linear convergence guarantee for strongly convex objective function and under a strict complementarity condition. 
	Recently, \cite{garber2016linearly,lacoste2015global} propose new variants of AFW which are linearly convergent for strongly convex objective function on a polytope. The version of AFW in \cite{lacoste2015global} has been further investigated by \cite{beck2017linearly,pena2016neumann,pena2019polytope} with a different analysis technique.  
	In this paper, we generalize the version of AFW in \cite{lacoste2015global} to the structured unbounded setting \eqref{problem1} and prove a linear convergence rate when the objective function is strongly convex and the constraint set is a polyhedron.

The efficacy of the Frank-Wolfe method depends upon how easily one can solve the linear optimization oracle. A recent survey
\cite{jaggi2013revisiting} lists several constraint sets arising in modern applications that admit efficient linear optimization oracles. Examples include the $\ell_1$-norm ball~\cite{clarkson2010coresets} and the nuclear norm ball~\cite{hazan2008sparse,Freund-inface}, among others. Here we show that Frank-Wolfe type methods can also be applied to problems \eqref{intro:generalized-lasso} and \eqref{intro:generalized-nuclear-norm}, where the constraint sets are transformed/unbounded versions of the $\ell_1$-norm ball and nuclear norm ball, respectively.

A special case of~\eqref{intro:generalized-lasso} is the $\ell_1$ trend filtering problem~\cite{Boyd-trend-filter}. Statistical and computational aspects of this problem have been studied in~\cite{Boyd-trend-filter,Ryan-trend-filter,gen-lasso,wahlberg2012admm}. 
Currently, popular algorithms for the $\ell_1$ trend filtering problem include interior point methods \cite{Boyd-trend-filter},
ADMM-based algorithms
\cite{wahlberg2012admm,ramdas2016fast}, and parametric quadratic programming \cite{gen-lasso}.
The generalized nuclear norm regularized matrix estimation problem~\eqref{intro:generalized-nuclear-norm},
has important applications in computer vision~\cite{angst2011generalized}, collaborative filtering~\cite{srebro2010collaborative}, and matrix completion with side information and 
missing data~\cite{Fithian2018fexible,eftekhari2018weighted,chiang2015matrix}. 
To our knowledge,
Frank-Wolfe type methods are yet to be explored for problems~\eqref{intro:generalized-lasso} and~\eqref{intro:generalized-nuclear-norm}---bridging this gap is the focus of our paper.

\smallskip

\noindent {\bf{Notation:}} We let $\R^n$ denote the $n$-dimensional Euclidean space, and $\R^{m\times n}$ the space of $m\times n$ real matrices. For $x,y\in \R^n$, $\la x,y\ra =\sum_{i=1}^n x_i y_i$ denotes the Euclidean inner product of $x$ and $y$. Let $\bsone_n$ denote the vector in $\R^n$ with all coordinates being $1$. Let $\boldsymbol{0}_n$ denote the vector in $\R^n$ with all coordinates being $0$, and $ \boldsymbol{0}_{m\times n} $ be the matrix in $\R^{m\times n}$ with all elements being $0$.
Let $e_i$ be the unit vector in $\R^n$ with the $i$-th coordinate being $1$ and all other coordinates being $0$. Let $I_n$ be the identity matrix in $\R^{n\times n}$.
For $A,B \in \R^{m\times n}$, $\la A, B \ra:= \sum_{i=1}^m \sum_{j=1}^n A_{i,j} B_{i,j} $ defines the inner product of $A$ and $B$, and $A \otimes B$ denotes their Kronecker product. For a function $f$, $\dom(f)$ denotes the domain of $f$.
For a given linear subspace $T\subset\R^n$, we let $\dim(T)$ denote the dimension of $T$, and $T^\perp$ its orthogonal subspace. Furthermore, $\cP_T$ and $\cP_T^{\perp}$ denote the projection
operators onto the linear subspaces $T$ and its orthogonal subspace $T^{\perp}$ (respectively). For two subsets $T,S\subset \R^n$, $T\oplus S$ denotes the direct sum of the two subsets, i.e., $T\oplus S= \{x+y~|~  x\in T, y\in S\}$.
A function $f$ is said to be $L$-smooth over a set $Q$ if it is differentiable and $\|\nabla f(u) - \nabla f (v)\|_2 \leq L \| u - v\|_2$ for all $u,v \in Q$.

\section{Unbounded Frank-Wolfe Algorithms}\label{sec:algorithm}
In this section, we present two new Frank-Wolfe algorithms (with and without away-step) designed to solve \eqref{problem1} with an unbounded feasible region. As mentioned above, we alternate between performing a Frank-Wolfe step on the bounded set $S$ and a gradient descent step along the subspace $T$. Since the gradient descent direction can also be viewed as an extreme ray to the solution of the Frank-Wolfe linear subproblem along the linear subspace $T$, we call this the ``Unbounded (Away Step) Frank-Wolfe Method''. 

Algorithm \ref{CG-G method} presents our first algorithm, the unbounded Frank-Wolfe method, for solving \eqref{problem1}. In each iteration, we first perform a gradient descent step along the unbounded subspace $T$. To do so, we first compute the negative projected gradient onto $T$, namely $-\cP_{T}\nabla f(x^{k})$, and move the current solution towards this direction with step-size $\eta$. While the projection onto a bounded set (e.g., $S$) can be expensive, the projection onto a linear subspace is usually much cheaper, and the solution can be computed in closed-form. Moreover, for $\ell_1$ trend filtering, the subspace $T$ is in fairly low dimension, making the projection step even simpler. 
Section \ref{section: applications} shows that a solution $s^k$ of the linear subproblem within the bounded set $S$, can be solved efficiently for these two motivating examples by making use of problem structure. Finally, we perform the Frank-Wolfe step within set $S$ using step-size $\alpha_k$ by moving along the direction $s^k-\cP_T^{\perp} x^k$, i.e., the direction between the resulting extreme point and the current solution projected onto set $S$ (note that $\cP_{T}^\perp x^k = \cP_T^\perp y^k$ as $\cP_T^\perp \cP_T = 0$). 
Different step-size rules have been explored in the literature of the Frank-Wolfe method~\cite{Freund-2016-analysis}. Here, we study two step-size rules:
\begin{itemize}
	\item Line search step-size rule: 
	\begin{equation}\label{line-search step-size rule}
		\alpha_{k} \in \argmin_{\al\in[0,1]} f(y^k + \al (s^k - \cP_T^{\perp}x^{k}) ).
	\end{equation}
	
	\item Simple step-size rule: 
	\begin{equation}\label{simple step-size rule}
		\al_k = \tfrac{2}{k+2} ~ \text{if}~ f(y^k
		+\tfrac{2}{k+2}(s^{k}-\cP_T^{\perp} x^k ))\le f(x^0); ~\al_k = 0~ \text{otherwise.}
	\end{equation}
\end{itemize}

The simple step-size rule avoids line search. Indeed, $\al_k = {2}/{(k+2)}$ is the standard step-size in the Frank-Wolfe literature~\cite{Freund-2016-analysis}; and here, for technical reasons, we also want to make sure that $x^k$ stays in a level set of $f$, i.e., $f(x^k)\le f(x^0)$ for $k\ge 1$. In practice, this condition is always satisfied after the first several iterations.

\begin{algorithm}
	\caption{Unbounded Frank-Wolfe Method (uFW)}
	\label{CG-G method}
	\begin{algorithmic}
		\STATE \textbf{Initialize.} Initialize with $x^{0}\in T \oplus S$, gradient descent step-size $\eta$ and Frank-Wolfe step-size sequence $\alpha_k\in [0,1]$ for all $k$.
		\STATE {Perform the following steps for iterations $k=0,1,2,...$}
		\STATE \ \ \ \ \ \ \ \ \ (1) \textbf{Gradient descent step} (along subspace $T$):\ \  $y^{k}=x^{k}-\eta\cP_{T}\nabla f(x^{k})$.
		\STATE \ \ \ \ \ \ \ \ \ (2) \textbf{Solve linear oracle}:\ \  $s^{k}=\arg\min_{s\in S}\langle\nabla f(y^{k}),s\rangle$.
		\STATE \ \ \ \ \ \ \ \ \ (3) \textbf{Frank-Wolfe step} (along subset $S$):\ \  $x^{k+1}= y^k
		+\alpha_{k}(s^{k}-\cP_T^{\perp} x^k )$.
	\end{algorithmic}
\end{algorithm}
Unlike gradient descent, the traditional Frank-Wolfe method does not have linear convergence even if the objective function is strongly convex. An intuitive explanation is that the solutions to the linear subproblems in the Frank-Wolfe algorithm may alternate between two extreme points of the constraint set, the iterate solutions zigzag, slowing down the convergence of the algorithm (see~\cite{lacoste2015global} for a nice explanation). Away-step Frank-Wolfe method allows moving in a direction opposite to the maximal solution of the linear model evaluated at the current solution. The iterate solution can land on a certain face of the constraint set, thereby avoiding the aforementioned zigzagging phenomenon. Moreover, with away-steps, the Frank-Wolfe method enjoys a linear convergence rate when the objective function is further strongly convex and the constraint set is a polytope \cite{lacoste2015global,pena2016neumann,pena2019polytope}. In addition to faster convergence, the away-step Frank-Wolfe method can usually lead to a sparser solution \cite{bomze2019first,bomze2020active}, which is helpful in many applications where sparsity and interpretability are desirable properties of a solution~\cite{Freund-inface}.

Algorithm \ref{CG-G with away steps} adapts the away-step Frank-Wolfe method to solve \eqref{problem1} with the unbounded feasible region $T\oplus S$. The major difference of Algorithm \ref{CG-G with away steps} with Algorithm \ref{CG-G method} is that the former allows performing an away step for the update along $S$. 
Note that $\cP_T^\perp x^k$ can be represented as a convex combination of at most $k$ vertices $V(x^k)\subset \text{vertices}(S)$, which can be written as $\cP_T^\perp x^k = \sum_{v\in V(x^k)} \lam_{v}(x^k) v$, with $\lam_{v}(x^k)>0$ for $v\in V(x^k)$ and $ \sum_{v\in V(x^k)} \lam_{v}(x^k) = 1$. In Algorithm \ref{CG-G with away steps}, we keep track of the set of vertices $V(x^k)$ and the weight vector $\lam(x^k)$, and choose between the Frank-Wolfe step and away step accordingly. After a step is taken in the direction $T^\perp$, we update the set of vertices $V(x^{k+1})$ and the weights $\lam(x^{k+1})$. Unlike Algorithm \ref{CG-G method}, we perform a line search to find the Frank-Wolfe
step-size\footnote{Note that other step size rules using the smoothness parameter or backtracking line search \cite{pedregosa2020linearly} also work for uAFW under which linear convergence can be proved (with slightly different arguments). We focus on the exact line-search steps for simplicity.} (the simple step-size rule does not apply here). Note the cardinality of the vertex set $V(x^k)$ in the maximization problem in Step (2) of Algorithm \ref{CG-G with away steps} is usually small, so the linear oracle for computing $v^k$ can be much simpler than the computation of $s^k$.

Formally speaking, if we decide to take a Frank-Wolfe step (See Algorithm \ref{CG-G with away steps}), we update the vertex set by 
\begin{eqnarray}\label{update-vertex1}
	V(x^{k+1}) = \{s^k\} ~~ \text{if} ~ \al_k=1;~ \text{and} ~~ V(x^{k+1}) = V(x^k) \cup \{s^k\} ~~ \text{if} ~ \al_k\neq1.
\end{eqnarray}
We update the weights by
\begin{equation}\label{update-weight1}
	\begin{aligned}
		&\lam_{s^k}(x^{k+1}) = (1-\al_k) \lam_{s^k} (x^k) + \al_k, \\
		\text{and}~~ & \lam_v(x^{k+1}) = (1-\al_k) \lam_v (x^{k})~~~~~\text{for}~~v\in V(x^k) \backslash \{s^k\}.
	\end{aligned}
\end{equation}
\begin{equation}\label{update-vertex2}
	V(x^{k+1}) = V(x^{k}) \backslash \{v^k\} ~ \text{if} ~ \al_k = \al_{\max};~~\text{and}~~ V(x^{k+1}) = V(x^k) ~ \text{if} ~ \al_k \neq \al_{\max}.
\end{equation}
We update the weights by 
\begin{equation}\label{update-weight2}
	\begin{aligned}
		&\lam_{v^k} (x^{k+1}) = (1+ \al_k) \lam_{v^k} (x^k) - \al_k, \\
		\text{and} ~~ & \lam_{v}(x^{k+1}) = (1+\al_k) \lam_v^{k} ~~~~~ \text{for} ~  v\in V(x^k) \backslash \{v^k\}.
	\end{aligned}
\end{equation}
This update follows from recent work on away-step Frank-Wolfe methods~\cite{lacoste2015global, pena2019polytope}.

\begin{algorithm}
	\caption{Unbounded Away-Step Frank-Wolfe Method (uAFW)}
	\label{CG-G with away steps}
	\begin{algorithmic}
		\STATE \textbf{Initialize.} Initialize with $x^{0}\in T \oplus S$ such that $\cP_T^{\perp} x^0 $ is a vertex of $S$, vertex set $V(x^0) =\{ \cP_T^\perp x^0 \}$, weight parameters $\lam_{v}(x^0) = 1$ for $v\in V(x^0)$, and gradient descent step-size $\eta$.
		\STATE Perform the following updates for iterations $k=0,1,2, \dots$:
		\STATE \ (1) {Gradient descent} (along subspace $T$):
		$
		y^{k}=x^{k}-\eta \cP_{T}\nabla f(x^{k}) . $
		\STATE \  (2) {Linear oracle}: 
		$
		s^{k}:= \arg\min_{s\in S}\la{\na f(y^{k})},s\ra, ~ 
		v^k:= \argmax_{v\in V(x^k)} \la {\na f(y^{k})},v \ra .
		$
		\STATE \ (3) {Find the moving direction}: \\
		\ \ \    \quad \textbf{if} $\la \na f(y^k), s^k- \cP_T^{\perp} x^k \ra < \la \na f(y^k), \cP_T^{\perp} x^k  - v^k \ra  $: (Frank-Wolfe step)
		\STATE \ \ \   \quad \quad $d^k:= s^k - \cP_T^{\perp} x^k$; $\al_{\max} = 1$.
		\STATE \ \ \   \quad \textbf{else}: (away step)
		\STATE \ \ \  \quad \quad $d^k:= \cP_T^{\perp}x^k - v^k$; $\al_{\max} = {\lam_{v^k}(x^k)  }/({1- \lam_{v^k}(x^k)}).$
		\STATE \   
		(4) {FW/away step} (along subset $S$):  $x^{k+1} = y^{k}+\alpha_{k}d^k$,
		where $\alpha_k\in[0,\al_{\max}]$ is chosen by line search.
		\STATE \   
		(5) Update vertices $V(x^{k+1})$ and weights  $\lam_v(x^{k+1})$ for $v\in V(x^{k+1})$ according to equations \eqref{update-vertex1} - \eqref{update-weight2}.
	\end{algorithmic}
\end{algorithm}

\section{Computational Guarantees}\label{section: computational guarantees}
In this section, we derive computational guarantees of uFW and uAFW. When the objective is smooth, we show that uFW converges to an optimal solution with sublinear rate $O(1/k)$. Additionally, if the objective is strongly convex and the constraint set $S$ is a polyhedron, we show that uAFW converges linearly to an optimal solution.

\subsection{Sublinear Rate for uFW}\label{subsection: analysis of uFW}

We start by introducing some quantities that appear in the computational guarantees of uFW.

Let $X^0$ denote the level set of $f(x)$ with maximal value $f(x^0)$, that is, 
\begin{eqnarray}\label{def:X0}
	X^0 := \{x\in \R^n ~|~ f(x) \le f(x^0)\}.
\end{eqnarray}
For the two step-size rules \eqref{simple step-size rule} and \eqref{line-search step-size rule}, the iterations $x^k$ and $y^k$ stay in the level set 
$X^0$ (see analysis below).
Throughout the paper, we assume that $f$ has a bounded level set along the subspace $T$, i.e.,
\begin{equation}\label{def: DT}
	D_T:= \mathop{\rmsup}\limits_{x,y\in X^0 } \|\cP_{T}(x-y)\|_2 < \infty \ .
\end{equation}
The traditional Frank-Wolfe method is invariant under affine transformations.  
\cite{jaggi2013revisiting}~introduces a curvature constant of the objective with respect to the bounded constraint set, which is used to improve the convergence rate of the Frank-Wolfe method by using the  invariance (of affine transformations). In order to measure the progress of the Frank-Wolfe step in Algorithm \ref{CG-G method}, we extend the curvature constant in \cite{jaggi2013revisiting} to our setting \eqref{problem1} with an unbounded constraint set:

\begin{definition}
	The curvature constant of $f$ with respect to the constraint set $S$ and level set $X^0$ is defined as
	\begin{equation}\label{def: Cf}
		\begin{aligned}
			C_{f,x^0}^S:=
			\sup_{s\in S ,\ x,y\in \R^n, \al\in (0,1]}
			\big\{ &
			({2}/{\al^2})\lt[  f(y) - f(x) - \la  \na f(x), y-x \ra  \rt] ~\big| \\
			& \cP_T^{\perp} x \in S,\  x\in X^0,\ 
			y = x+ \al (s - \cP_T^\perp x) \big\} \ .
		\end{aligned}
	\end{equation}
\end{definition}

The parameter $C_{f,x^0}^S$ quantifies the curvature information of $f$ when moving within the set $S$, and its value is dependent on the function $f$, the constraint set $S\oplus T$, and the initial point $x^0$. In particular, it is easy to check that $C_{f,x^0}^S$ is smaller than $L'\diam(S)^2$, where $L'$ is the smoothness parameter of $f$ with respect to any norm, and $\diam(S)$ is the diameter of set $S$ with respect to the same chosen norm. A related quantity appears in the standard  analysis of the Frank-Wolfe method \cite{jaggi2013revisiting}.

To measure the progress of the gradient descent step in Algorithm \ref{CG-G method}, we introduce below the smoothness constant of $f$ with respect to the subspace $T$:
\begin{definition}
	The smoothness parameter of $f$ with respect to the subspace $T$ and level set $X^0$ is defined as the Lipschitz constant of $\nabla f$ along subspace $T$:
	\begin{equation}\label{def: L}
		L_{f,x^0}^T := \sup_{\substack{x,y\in T\oplus S\\ x \neq y}} \lt\{ \frac{\|\cP_T\na f(x) - \cP_T\na f(y)\|_2}{\|x-y\|_2} ~\Big|~   x-y \in T,\ x,y\in X^0\rt\} \ .
	\end{equation}
\end{definition}

Now we are ready to present the sublinear rate of uFW:

{\color{black}
	\begin{theorem}\label{conv-rate-Cf}
		Consider Algorithm \ref{CG-G method} with initial solution $x^0$, gradient descent step-size $\eta \le 1/L_{f,x^0}^T$, and Frank-Wolfe step-size $\alpha_k$ following either the line-search rule~\eqref{line-search step-size rule} or the simple step-size rule~\eqref{simple step-size rule}.
		Then it holds for all $k\ge 1$ that
		$$
		f(x^{k})-f^{*}\le\frac{2C_{f,x^0}^S + 16  D_T^2 / \eta}{k+2} 
		$$
		where $f^*$ is the optimal objective value for problem \eqref{problem1}.
	\end{theorem}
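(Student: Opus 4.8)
The plan is to distill the two elementary descent inequalities produced by the gradient step and the Frank--Wolfe step in each iteration, fold them into a single recursion on the optimality gap $h_k:=f(x^k)-f^*$, and unroll it to the claimed rate. First I would record the invariants that give the constants meaning. Since $X^0$ is a sublevel set of the convex function $f$ it is convex, and a short induction shows $x^k,y^k\in X^0$ and $\cP_T^\perp x^k\in S$ for all $k$: the identity $\cP_T^\perp x^{k+1}=(1-\al_k)\cP_T^\perp x^k+\al_k s^k$ exhibits $\cP_T^\perp x^{k+1}$ as a convex combination of points of $S$, while both step--size rules keep $f(x^{k+1})\le f(x^0)$ (for line search because $\al=0$ is admissible, for \eqref{simple step-size rule} by its explicit level--set test). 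I also use $\cP_T^\perp y^k=\cP_T^\perp x^k$ throughout. The gradient step then obeys $f(y^k)\le f(x^k)-\tfrac{\eta}{2}\|\cP_T\na f(x^k)\|_2^2$, by the descent lemma restricted to $T$ (exactly what $L^T_{f,x^0}$ in \eqref{def: L} controls) together with $\eta\le 1/L^T_{f,x^0}$; and the Frank--Wolfe step obeys $f(x^{k+1})\le f(y^k)-\al_k G_k+\tfrac{\al_k^2}{2}C^S_{f,x^0}$, where $G_k:=\la\na f(y^k),\cP_T^\perp y^k-s^k\ra\ge 0$ is the Frank--Wolfe gap, by applying \eqref{def: Cf} at the legal base point $y^k$. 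For the line--search rule this last inequality holds with $\al_k$ replaced by the nominal value $2/(k+2)$, since line search does at least as well as any fixed step; that is the version I would propagate.

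The technical core is to bound $G_k$ from below by the optimality gap. Writing a minimizer $x^*$ with $\cP_T^\perp x^*\in S$, convexity at $y^k$, optimality of $s^k$ for the linear oracle, and Cauchy--Schwarz with $\|\cP_T(y^k-x^*)\|_2\le D_T$ give $f(y^k)-f^*\le G_k+D_T\|\cP_T\na f(y^k)\|_2$, so $G_k\ge (f(y^k)-f^*)-D_T\|\cP_T\na f(y^k)\|_2$. The residual $D_T\|\cP_T\na f(y^k)\|_2$ is the term that couples the two directions, and I would transport it to the quantity the gradient step actually decreases via the Lipschitz estimate $\|\cP_T\na f(y^k)\|_2\le(1+\eta L^T_{f,x^0})\|\cP_T\na f(x^k)\|_2\le 2\|\cP_T\na f(x^k)\|_2$. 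Substituting all three estimates (together with $f(y^k)-f^*\le h_k-\tfrac{\eta}{2}p_k^2$, writing $p_k:=\|\cP_T\na f(x^k)\|_2$) yields, for $k\ge 1$ so that $\al_k<1$, $h_{k+1}\le (1-\al_k)\big(h_k-\tfrac{\eta}{2}p_k^2\big)+2\al_k D_T p_k+\tfrac{\al_k^2}{2}C^S_{f,x^0}$. Maximizing the concave expression $-(1-\al_k)\tfrac{\eta}{2}p_k^2+2\al_k D_T p_k$ over $p_k$ (completing the square) eliminates the gradient norm and leaves the clean recursion $h_{k+1}\le (1-\al_k)h_k+\tfrac{B}{(k+2)^2}$ with $B$ of order $C^S_{f,x^0}+D_T^2/\eta$; tracking the constants through the square and the bound $(k+2)/k\le 3$ for $k\ge1$ is what produces the stated numerical constant.

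Finally I would solve the recursion. With $\al_k=2/(k+2)$ one has $1-\al_k=k/(k+2)$, and the induction $h_k\le B/(k+2)$ propagates because the inductive step reduces to $(k+1)(k+3)\le (k+2)^2$. The base case $k=0$ needs separate care: there the nominal $\al_0=1$ annihilates the factor $(1-\al_0)$ and hence the gradient--descent decrease, so I cannot pay for the residual out of the decrease. Instead I would bound it uniformly: since $f$ attains its minimum over the slice $\cP_T^\perp x^0+T$ at a point still in $X^0$ (that slice meets $X^0$ in a compact set by finiteness of $D_T$), $T$--smoothness gives $\|\cP_T\na f(y^0)\|_2\le L^T_{f,x^0}D_T\le D_T/\eta$, whence $h_1\le \tfrac12 C^S_{f,x^0}+D_T^2/\eta\le B/3$, seeding the induction. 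The simple rule \eqref{simple step-size rule} needs the extra remark that whenever its level--set test rejects the nominal step, one both retains the gradient--descent decrease and certifies through the curvature inequality that $G_k$ is already $O(1/k)$, so the same recursion is maintained. I expect the main obstacle to be exactly this coupling: the gradient step is analyzed at $x^k$ while the oracle is evaluated at $y^k$, forcing the subspace residual to be transported between the two points (costing the factor $1+\eta L^T_{f,x^0}$) and then absorbed by the descent through completion of the square---and the degenerate $\al_0=1$ base case, which must be handled by a uniform gradient bound rather than by the per--step decrease.
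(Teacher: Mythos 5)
Your proof follows essentially the same route as the paper's: the same per-iteration inequality $h_{k+1}\le(1-\al_k)\bigl(h_k-\tfrac{\eta}{2}\|\cP_T\na f(x^k)\|_2^2\bigr)+2\al_k D_T\|\cP_T\na f(x^k)\|_2+\tfrac{\al_k^2}{2}C_{f,x^0}^S$ (the paper's inequality (3.16)), the same elimination of the gradient norm by completing the square (the paper phrases it as $a^2+b^2\ge 2ab$), and the same uniform bound $\|\cP_T\na f\|_2\le L_{f,x^0}^T D_T$ via a minimizer on the affine slice $x^0+T$ for the degenerate $\al_0=1$ base case. The one slip is quantitative: absorbing the cross term costs $\tfrac{8D_T^2}{k(k+2)\eta}$, and bounding this by $\tfrac{c\,D_T^2/\eta}{(k+2)^2}$ with $(k+2)/k\le 3$ for all $k\ge 1$ gives $c=24$ rather than the stated $16$ (and the induction from $h_1$ to $h_2$ does not close with the constant $16$ if one uses $24$ at $k=1$); to recover the stated constant you must, as the paper does, treat $k=1$ as a second explicit base case using $\|\cP_T\na f(x^1)\|_2\le L_{f,x^0}^T D_T$ and reserve the bound $(k+2)/k\le 2$ for $k\ge 2$.
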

	\begin{remark}
		We consider two special cases of Theorem~\ref{conv-rate-Cf}. When $T=\{0\}$, Algorithm \ref{CG-G method} recovers the traditional Frank-Wolfe method, and Theorem \ref{conv-rate-Cf} implies $f(x^{k})-f^{*}\le O({C_{f,x^0}^S}/{k})$, which recovers the standard convergence rate for the traditional Frank-Wolfe method. When $S=\{0\}$, Algorithm \ref{CG-G method} recovers the traditional gradient descent method, and Theorem \ref{conv-rate-Cf} implies $f(x^{k})-f^{*}\le O(L_{f,x^0}^T D_T^2/{k})$ if we take $\eta = 1/L_{f,x^0}^T$. This recovers the standard sublinear rate for gradient descent. 
	\end{remark}

		\subsubsection{Practical termination rules}\label{terminate-rules-FW-1}
		It is well known that the FW method (for problem \eqref{problem1} with $T=\{0\}$) naturally produces a primal-dual gap that can be used as a termination criterion (see, e.g. \cite{Freund-2016-analysis}). On the other hand, the gradient descent method (for problem \eqref{problem1} with $S=\{0\}$)
		can be terminated if the norm of the gradient is below a specified tolerance. In light of these results, we propose to set a termination criteria for Algorithm~\ref{CG-G method} if both the following quantities
		\begin{equation}\label{def: Gk and Hk}
			G_k := \la \na f(y^k) , \cP_T^\perp y^k - s^k \ra , \quad \text{and} ~ ~~~ H_k:=  \| \cP_T \na f(y^k) \|_2
		\end{equation}
		are small.
		Note that both $G_k$ and $H_k$ are always non-negative, and they can be computed as a by-product of Algorithm \ref{CG-G method}. The next proposition shows that 
		both $\min_{1\le k \le 2m-1} \{G_k\}$ and $\min_{1\le k \le 2m-1} \{H_k\} $ converge to $0$ with a rate $O(1/m)$. 
		
		\begin{proposition}\label{prop: stop-criterion-conv}
			Under the same assumption of Theorem \ref{conv-rate-Cf}, it holds for all $m\ge 1$ that
			\begin{equation}\label{ineq1: stop-criterion-conv}
				\min_{k=1,\ldots,2m-1} G_k \le \frac{1}{\log(4/3)} \frac{2C_{f,x^0}^S+8D_T^2/\eta}{m+1}\ ,
			\end{equation}
			and 
			\begin{equation}\label{ineq2: stop-criterion-conv}
				\min_{k=1,\ldots,2m-1} H_k \le \sqrt{\frac{8C_{f,x^0}^S\eta +32 D_T^2  }{\log(4/3)}} \left(\frac{\eta^{-1}+ L_{f,x^0}^T}{m+1}\right).
			\end{equation}
			
		\end{proposition}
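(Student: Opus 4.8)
The plan is to reduce everything to two one-step inequalities and then extract the minimum of $G_k$ (resp.\ $H_k$) by summing over a geometrically growing window of indices. First I would record the per-iteration progress. The gradient-descent step~(1), together with $\eta\le 1/L_{f,x^0}^T$, the Lipschitz bound along $T$, and $y^k-x^k=-\eta\cP_T\na f(x^k)\in T$, gives the descent estimate $f(x^k)-f(y^k)\ge\frac{\eta}{2}\|\cP_T\na f(x^k)\|_2^2$ (so in particular $f(y^k)\le f(x^k)$, keeping iterates in $X^0$). The Frank--Wolfe step~(3), together with the curvature definition \eqref{def: Cf} applied at $x=y^k$ (valid since $\cP_T^{\perp}y^k=\cP_T^{\perp}x^k\in S$ and $y^k\in X^0$) and the optimality of $s^k$ in the linear oracle (which forces $G_k\ge 0$), gives $f(x^{k+1})\le f(y^k)-\al_k G_k+\frac{\al_k^2}{2}C_{f,x^0}^S$. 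The one subtlety I would handle here is uniformity across the two step-size rules: for line search the realized decrease is no worse than that of the ``virtual'' step $\tfrac{2}{k+2}$, so I may replace $\al_k$ by $\tfrac{2}{k+2}$ inside the curvature term for both rules. Writing $h_k:=f(x^k)-f^*$ and combining, I obtain for every $k$
\[
  \tfrac{2}{k+2}\,G_k \;\le\; (h_k-h_{k+1}) + \tfrac{2C_{f,x^0}^S}{(k+2)^2},
  \qquad
  \tfrac{\eta}{2}\,\|\cP_T\na f(x^k)\|_2^2 \;\le\; (h_k-h_{k+1}) + \tfrac{2C_{f,x^0}^S}{(k+2)^2},
\]
each obtained from one combined inequality by dropping the other (nonnegative) progress term.

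Next I would sum each inequality over the window $k=m,\dots,2m-1$. Telescoping gives $\sum_{k=m}^{2m-1}(h_k-h_{k+1})=h_m-h_{2m}\le h_m$, and $\sum_{k=m}^{2m-1}(k+2)^{-2}\le (m+1)^{-1}$, so each right-hand side is at most $h_m+2C_{f,x^0}^S/(m+1)$; inserting Theorem~\ref{conv-rate-Cf} bounds this by $(4C_{f,x^0}^S+16D_T^2/\eta)/(m+1)$. For $G_k$ the crucial point is the dimension-free weight sum $\sum_{k=m}^{2m-1}\tfrac{2}{k+2}\ge 2\log\tfrac{2m+2}{m+2}\ge 2\log(4/3)$ (monotone in $m$, value $\log(4/3)$ at $m=1$), so that $2\log(4/3)\min_{m\le k\le 2m-1}G_k\le\sum_{k=m}^{2m-1}\tfrac{2}{k+2}G_k$; this yields \eqref{ineq1: stop-criterion-conv}, and since the minimum over $1\le k\le 2m-1$ is no larger, the stated bound follows. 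For the second inequality I would instead use $\min_{m\le k\le 2m-1}\|\cP_T\na f(x^k)\|_2^2\le\tfrac1m\sum_{k=m}^{2m-1}\|\cP_T\na f(x^k)\|_2^2$, producing a bound of order $(8C_{f,x^0}^S\eta+32D_T^2)/(\eta^2 m(m+1))$.

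Finally I would convert the $x^k$-gradient into $H_k=\|\cP_T\na f(y^k)\|_2$. Using the Lipschitz bound along $T$ once more with $y^k-x^k\in T$ gives the pointwise estimate $H_k\le(1+\eta L_{f,x^0}^T)\|\cP_T\na f(x^k)\|_2$, hence $\min_k H_k\le(1+\eta L_{f,x^0}^T)\min_k\|\cP_T\na f(x^k)\|_2$; the factor $(1+\eta L_{f,x^0}^T)/\eta=\eta^{-1}+L_{f,x^0}^T$ is exactly what appears in \eqref{ineq2: stop-criterion-conv}. To recast the $1/(m(m+1))$ decay in the uniform form stated in the proposition, I would use $m(m+1)\ge\log(4/3)(m+1)^2$ for $m\ge1$ and then take square roots.

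The main obstacle is not any single computation but getting the constants to line up cleanly: the argument must produce one per-iteration inequality that is simultaneously valid for the line-search and simple step-size rules (this is where the ``virtual step'' $\tfrac{2}{k+2}$ enters), and it must reconcile the fact that the termination quantity $H_k$ is evaluated at $y^k$ whereas the descent guarantee is naturally phrased at $x^k$. Resolving this mismatch through the smoothness-along-$T$ estimate is what generates the precise $(\eta^{-1}+L_{f,x^0}^T)$ factor; the remaining ingredients (the doubling window that makes $\sum 1/(k+2)$ a constant, and the telescoping into Theorem~\ref{conv-rate-Cf}) are then routine.
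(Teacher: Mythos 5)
Your proof is correct and follows essentially the same route as the paper's: the combined one-step descent inequality (valid for both step-size rules via the virtual step $2/(k+2)$), summation over the doubling window $[m,2m-1]$ with $\sum_k 2/(k+2)\ge 2\log(4/3)$, telescoping into Theorem \ref{conv-rate-Cf}, and the $(1+\eta L_{f,x^0}^T)$ conversion from $\|\cP_T\nabla f(x^k)\|_2$ to $H_k$. The only (harmless) deviation is that you split the inequality and bound $\min_k G_k$ and $\min_k H_k$ at possibly different indices via separate averaging arguments, whereas the paper extracts a single index $j$ controlling both; your variant still yields the stated bounds, with the $H_k$ estimate in fact marginally tighter.
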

		Finally, note that if some additional problem-specific parameters are known, 
		we can compute upper bounds for the primal optimality gap $f(y^k) - f^*$ based on $G_k$ and $H_k$. 
		\begin{proposition}\label{prop: primal-gap-with-GH}
			Under the same assumption of Theorem \ref{conv-rate-Cf}, we have
			\begin{equation}
				f(y^k) - f^* \le G_k + H_k D_T~~\forall k \geq 1.
			\end{equation}
			If in addition $f(\cdot)$ is $\mu$-strongly convex with respect to $\|\cdot\|_2$ (Definition \ref{strong-convex-def}), it holds 
			\begin{equation}
				f(y^k) - f^* \le G_k + H_k^2/(2\mu)~~\forall k \geq 1.
			\end{equation}
		\end{proposition}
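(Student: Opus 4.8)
The plan is to start from the first-order convexity inequality and split the resulting inner product into its $T$ and $T^{\perp}$ components, matching each component to one of the two computable quantities $G_k$ and $H_k$. Fix any optimal solution $x^*$ of \eqref{problem1}; since $x^0$ is feasible we have $f^* = f(x^*) \le f(x^0)$, so $x^* \in X^0$, and by the analysis accompanying Theorem~\ref{conv-rate-Cf} the iterate $y^k \in X^0$ as well. Convexity of $f$ gives $f(y^k) - f^* \le \la \na f(y^k), y^k - x^* \ra$. Using that $\cP_T$ and $\cP_T^\perp$ are complementary orthogonal projections, I would write
$$
\la \na f(y^k), y^k - x^* \ra = \la \cP_T \na f(y^k), \cP_T(y^k - x^*) \ra + \la \cP_T^\perp \na f(y^k), \cP_T^\perp(y^k - x^*) \ra .
$$

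For the first (main) inequality I would bound the two terms separately. For the $T^{\perp}$ term, note $x^* \in T\oplus S$ forces $\cP_T^\perp x^* \in S$; since $s^k$ solves the linear oracle over $S$ and both $s^k, \cP_T^\perp x^*$ lie in $T^{\perp}$, the optimality $\la \na f(y^k), s^k\ra \le \la \na f(y^k), \cP_T^\perp x^*\ra$ lets me replace $\cP_T^\perp x^*$ by $s^k$, yielding exactly $\la \na f(y^k), \cP_T^\perp y^k - s^k\ra = G_k$. For the $T$ term, Cauchy--Schwarz gives a bound of $H_k\,\|\cP_T(y^k - x^*)\|_2$, and since $y^k, x^* \in X^0$ the definition \eqref{def: DT} of $D_T$ caps the norm factor by $D_T$. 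Adding the two bounds gives $f(y^k) - f^* \le G_k + H_k D_T$.

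For the strongly convex refinement I would keep the $G_k$ bound on the $T^{\perp}$ term but replace plain convexity by the $\mu$-strong convexity inequality $f^* \ge f(y^k) + \la \na f(y^k), x^* - y^k\ra + \tfrac{\mu}{2}\|x^* - y^k\|_2^2$. Dropping the (nonnegative) $T^{\perp}$ part of $\|x^* - y^k\|_2^2$ and writing $u := \cP_T(y^k - x^*)$, the $T$-contribution becomes at most $\la \cP_T \na f(y^k), u\ra - \tfrac{\mu}{2}\|u\|_2^2 \le H_k \|u\|_2 - \tfrac{\mu}{2}\|u\|_2^2$. Maximizing this scalar quadratic in $\|u\|_2$ (optimum at $\|u\|_2 = H_k/\mu$) produces the value $H_k^2/(2\mu)$, which combined with the $G_k$ term gives the second inequality. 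Note this route no longer needs $D_T$, since strong convexity itself controls the $T$-component.

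I do not expect a genuine obstacle here; the argument is essentially clean bookkeeping of the orthogonal split. The only points requiring care are the membership/feasibility facts I rely on --- that $x^*, y^k \in X^0$ (so that $D_T$ applies) and that $\cP_T^\perp x^* \in S$ (so that the linear-oracle optimality of $s^k$ can be invoked) --- together with the elementary observation that inner products of $T^{\perp}$ vectors with $\na f(y^k)$ equal their inner products with $\cP_T^\perp \na f(y^k)$, which is what makes $G_k$ and $H_k$ appear exactly rather than up to spurious cross terms.
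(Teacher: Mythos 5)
Your proposal is correct and follows essentially the same route as the paper's proof: the convexity (resp.\ strong convexity) inequality, the orthogonal split of $\la \na f(y^k), y^k - x^*\ra$ into $T$ and $T^\perp$ parts, the linear-oracle optimality of $s^k$ to produce $G_k$, and Cauchy--Schwarz with $D_T$ (resp.\ the quadratic maximization giving $H_k^2/(2\mu)$) for the $T$ part. The extra care you take in verifying $x^*, y^k \in X^0$ and $\cP_T^\perp x^* \in S$ is implicit in the paper's argument but correctly spelled out here.
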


	\subsection{Proofs of Theorem \ref{conv-rate-Cf} and Proposition \ref{prop: stop-criterion-conv}}
	We first present Lemmas~\ref{lemma-zero} and~\ref{claim1} before presenting the proofs of Theorem \ref{conv-rate-Cf} and Proposition \ref{prop: stop-criterion-conv}:
	\begin{lemma}\label{lemma-zero}
		For any $x,y\in  T\oplus S$ with $y-x \in T$ and $x,y\in X^0$, the following holds
		$$
		f(y) \le f(x) + \la \na f(x), y-x \ra + (L_{f,x^0}^T/2) 
		\|y-x\|_2^2 \ .
		$$
	\end{lemma}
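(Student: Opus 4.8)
The plan is to reproduce the classical descent-lemma argument---integrating $\na f$ along the segment from $x$ to $y$---but with two adaptations forced by our setting: the full gradient must be reduced to its $T$-projection $\cP_T\na f$, and the entire segment must be shown to remain inside the level set $X^0$ so that the specialized constant $L_{f,x^0}^T$ in \eqref{def: L} actually applies.

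First I would record the orthogonality reduction. Since $y-x\in T$ while $\cP_T^\perp\na f(z)\in T^\perp$ for every $z$, we have $\la \na f(z), y-x\ra = \la \cP_T\na f(z), y-x\ra$ for all $z$; in particular the first-order term in the statement equals $\la \cP_T\na f(x), y-x\ra$. Setting $z_t:=x+t(y-x)$ for $t\in[0,1]$ and applying the fundamental theorem of calculus to $t\mapsto f(z_t)$ (valid since $f$ is smooth), this reduction gives
$$
f(y)-f(x)-\la \na f(x), y-x\ra = \int_0^1 \la \cP_T\na f(z_t)-\cP_T\na f(x),\, y-x\ra \, dt .
$$

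The crux is bounding the integrand by the definition of $L_{f,x^0}^T$, which requires checking that each pair $(z_t,x)$ satisfies the constraints in \eqref{def: L}: (i) $z_t,x\in T\oplus S$, which holds because $T\oplus S$ is convex (Minkowski sum of the subspace $T$ and the convex set $S$) and contains both endpoints $x,y$; (ii) $z_t-x=t(y-x)\in T$, immediate; and (iii) $z_t,x\in X^0$. Point (iii) is the one genuinely nontrivial step, and it is exactly where convexity of $f$ is used: since $f$ is convex its sublevel set $X^0$ is convex, so with $x,y\in X^0$ the whole segment $\{z_t\}_{t\in[0,1]}$ lies in $X^0$. Everything else is bookkeeping.

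With (i)--(iii) in hand, the definition \eqref{def: L} yields $\|\cP_T\na f(z_t)-\cP_T\na f(x)\|_2 \le L_{f,x^0}^T\|z_t-x\|_2 = L_{f,x^0}^T\, t\,\|y-x\|_2$ for $t>0$ (the bound is trivial at $t=0$). Finally I would apply Cauchy--Schwarz to the integrand, pull out $\|y-x\|_2^2$, and evaluate $\int_0^1 t\,dt = 1/2$, which delivers the claimed quadratic upper bound. I expect the main obstacle to be purely a matter of justifying the segment-in-$X^0$ claim cleanly (via convexity of the sublevel set), as the remaining estimates are routine.
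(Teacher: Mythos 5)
Your proof is correct and follows essentially the same route as the paper, which simply cites the classical descent-lemma argument (Lemma 1.2.3 of \cite{Nesterov-intro-book}) restricted to directions in $T$; your integration-along-the-segment argument is exactly that proof. The two details you make explicit---replacing $\na f$ by $\cP_T\na f$ via orthogonality of $y-x\in T$ against $T^\perp$, and using convexity of the sublevel set $X^0$ to ensure the whole segment satisfies the constraints in the definition \eqref{def: L} of $L_{f,x^0}^T$---are precisely the adaptations the paper leaves implicit, and you handle them correctly.
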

	The proof of Lemma \ref{lemma-zero} is the same as that appearing in the proof of Lemma 1.2.3 in \cite{Nesterov-intro-book} except that we restrict the analysis to the case $ y-x \in T $. 
	
	\smallskip
	
	For notational convenience we let $z^{k+1}:=y^{k}+(2/(k+2))(s^k - \cP_T^{\perp} x^k)$ for $k\ge 0$.
	\begin{lemma}\label{claim1}
		Suppose we set $\eta \le 1/L_{f,x^0}^T$. 
		Then for both step-size rules \eqref{line-search step-size rule} and \eqref{simple step-size rule}, we have 
		$x^k\in X^0 $ for all $k\ge 0$ and
		$f(x^{k+1})\le f(z^{k+1})$ for all $k\ge 1$.
	\end{lemma}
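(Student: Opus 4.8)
The plan is to prove both assertions by a single induction on $k$, with induction hypothesis that $x^k\in X^0$ (I will also silently maintain $\cP_T^\perp x^k\in S$, which persists because $\cP_T^\perp x^{k+1} = (1-\al_k)\cP_T^\perp x^k + \al_k s^k$ is a convex combination of points of $S$, using $\cP_T^\perp y^k = \cP_T^\perp x^k$). The base case $x^0\in X^0$ is immediate. In the inductive step I treat the gradient-descent step and the Frank-Wolfe step in turn, and the observation that organizes everything is that $z^{k+1}$ is \emph{exactly} the point reached by the Frank-Wolfe update with $\al = 2/(k+2)\in[0,1]$.

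For the gradient-descent step the goal is $f(y^k)\le f(x^k)$, which then gives $y^k\in X^0$. Writing $g:=\cP_T\na f(x^k)$ and $x(t):=x^k - t g\in T\oplus S$, I would apply Lemma \ref{lemma-zero} with the pair $(x^k, x(t))$ (legitimate since $x(t)-x^k=-tg\in T$) and use $\la\na f(x^k),g\ra = \|g\|_2^2$ to obtain $f(x(t))\le f(x^k) - t(1 - tL_{f,x^0}^T/2)\,\|g\|_2^2$. Since $t\le\eta\le 1/L_{f,x^0}^T$ forces the bracket $1 - tL_{f,x^0}^T/2\ge 1/2>0$, this yields $f(x(t))\le f(x^k)$, and $t=\eta$ gives the claim.

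The one genuine obstacle is that Lemma \ref{lemma-zero} (equivalently, the constant $L_{f,x^0}^T$) is only available when \emph{both} endpoints lie in $X^0$, so it cannot be applied to $x(t)$ until $x(t)\in X^0$ is already known. To break this circularity I would run a continuity argument. Set $T^\ast := \sup\{t\in[0,\eta] : f(x(s))\le f(x^k)\ \forall s\in[0,t]\}$; this is well defined, and for every $s\le T^\ast$ we have $x(s)\in X^0$, so the displayed descent inequality is valid on $[0,T^\ast]$ and gives $f(x(T^\ast))\le f(x^k)$. I then rule out $T^\ast<\eta$: if $f(x(T^\ast))<f(x^k)$, continuity lets me extend the interval past $T^\ast$, a contradiction; if $f(x(T^\ast))=f(x^k)$, the descent inequality forces $T^\ast\|g\|_2^2=0$, so either $g=0$ (then $x(\cdot)$ is constant and $T^\ast=\eta$) or $T^\ast=0$, which contradicts $\phi'(0)=-\|g\|_2^2<0$ for $\phi(t):=f(x(t))$ when $g\neq 0$. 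Hence $T^\ast=\eta$ and $f(y^k)\le f(x^k)\le f(x^0)$.

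For the Frank-Wolfe step I would split on the step-size rule. Under the line-search rule \eqref{line-search step-size rule}, $x^{k+1}$ minimizes $f$ along the segment, so taking $\al=0$ gives $f(x^{k+1})\le f(y^k)\le f(x^0)$ (hence $x^{k+1}\in X^0$), while the feasible choice $\al=2/(k+2)$ gives $f(x^{k+1})\le f(z^{k+1})$ directly. Under the simple rule \eqref{simple step-size rule}, if $f(z^{k+1})\le f(x^0)$ then $x^{k+1}=z^{k+1}$ and both conclusions hold with equality; otherwise $\al_k=0$, so $x^{k+1}=y^k$ and $f(x^{k+1})=f(y^k)\le f(x^0)<f(z^{k+1})$, again giving $x^{k+1}\in X^0$ and $f(x^{k+1})\le f(z^{k+1})$. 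This closes the induction and establishes both statements (the argument in fact works for all $k\ge 0$). The FW part is routine; essentially all the work is in the level-set-invariance of the gradient step above.
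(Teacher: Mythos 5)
Your proof is correct and follows the same basic structure as the paper's: establish descent of the gradient step, then split on the two step-size rules (your treatment of the Frank--Wolfe step is essentially identical to the paper's, with your direct induction replacing the paper's minimal-counterexample contradiction). The one substantive difference is your handling of $f(y^k)\le f(x^k)$: the paper simply invokes Lemma \ref{lemma-zero} on the pair $(x^k,y^k)$ (this is inequality \eqref{combine2}), which is mildly circular since that lemma requires $y^k\in X^0$ --- exactly the membership one is in the middle of proving. Your continuity/supremum argument on $T^\ast=\sup\{t\in[0,\eta]: f(x(s))\le f(x^k)\ \forall s\in[0,t]\}$, together with the sign analysis of $\phi'(0)$ to rule out $T^\ast=0$, closes this gap cleanly; it is the more careful version of what the paper takes for granted. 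One small caveat: when you apply Lemma \ref{lemma-zero} to $(x^k,x(s))$ for $s\le T^\ast$, the underlying Nesterov-style integral argument needs the Lipschitz bound along the whole segment, which is fine here because $X^0$ is convex (as $f$ is convex), but it is worth a word. Everything else checks out.
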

	\textbf{Proof:}
	We first note that the conclusions hold true trivially for 
	the line-search step-size rule \eqref{line-search step-size rule}. For the simple step-size rule \eqref{simple step-size rule}, we first show that $f(x^k)\le f(x^0)$ for any $k\ge 0$. Suppose there exists a $k$ such that $f(x^k)> f(x^0)$, then without loss of generality, we assume $k\ge 1$ is the smallest index that satisfies $f(x^k)> f(x^0)$, thus it holds that $f(x^{k-1})\le f(x^0)$. From the simple step-size rule in \eqref{simple step-size rule}, if $f(z^k)\le f(x^0)$, it holds $x^k=z^k$, whereby $f(x^k)=f(z^k)\le f(x^0)$. Otherwise, $\al_{k-1}=0$, hence it holds $x^k=y^{k-1}$, whereby $f(x^k)= f(y^{k-1})\le f(x^{k-1})\le f(x^0)$. In either case, we have $f(x^k)\le f(x^0)$, which contradicts the assumption. Therefore, we have $f(x^k)\le f(x^0)$ for any $k\ge 0$. Consequently, if $f(z^{k+1})\le f(x^0)$, we have $x^{k+1}=z^{k+1}$ whereby $f(x^{k+1})\le f(z^{k+1})$. Otherwise we have $f(z^{k+1})>f(x^0)$, whereby $f(x^{k+1})\le f(x^0)<f(z^{k+1})$, therefore it always holds that $f(x^{k+1})\le f(z^{k+1})$. This completes the proof of Lemma \ref{claim1}.\qed
	
	\smallskip
	\noindent
	\textbf{Proof of Theorem \ref{conv-rate-Cf}.}
	First, note that
	\begin{equation}\label{combine2}
		\begin{aligned}
			f(y^k) & \mathop{\le}\limits^{(i)}
			f(x^k) + \la \na f(x^k) , y^k-x^k \ra +  \|y^k-x^k\|_2^2/(2\eta) \\
			&\mathop{=}\limits^{(ii)}
			f(x^k) 
			-{(\eta/2)}\|\cP_T \na f(x^k) \|_2^2 \le 
			f(x^k) \le f(x^0)
			\ ,
		\end{aligned}
	\end{equation}
	where $(i)$ uses Lemma \ref{lemma-zero} and the assumption $\eta \le 1/L_{f,x^0}^T$, and $(ii)$ is from the update rule $y^k = x^k - \eta \cP_T \na f(x^k)$.
	Let $x^*$ be an optimal solution of \eqref{problem1}.
	Recall that we have defined $z^{k+1}:=y^{k}+({2}/{(k+2)})(s^k - \cP_T^{\perp} x^k)$, and shown in Lemma \ref{claim1} that $f(x^{k+1}) \le f(z^{k+1}) $.
	As a result, we have
	\begin{equation}\label{ieq3}
		\begin{aligned}
			f(x^{k+1}) & \le  f(z^{k+1}) \\
			& \le 
			f(y^k) + \frac{2}{k+2} \la \na f(y^k) , s^k - \cP_T^\perp x^k \ra + \frac{2C_{f,x^0}^S}{(k+2)^2} \\
			&\le 
			f(y^k) +  \frac{2}{k+2} \la \na f(y^k) , \cP_T^\perp x^* - \cP_T^\perp y^k \ra + \frac{2C_{f,x^0}^S}{(k+2)^2} \ ,
		\end{aligned}
	\end{equation}
	where the
	second inequality is from the definition of $C_{f,x^0}^S$ with $s=s^k,x=y^k$, $y=z^{k+1}$ and $\al = {2}/{(k+2)}$, and the third inequality is by the definition of $s^k$ 
	and $\cP_{T}^\perp x^k = \cP_{T}^\perp y^k$.
	Since
	\begin{equation}\label{combine3}
		\la \na f(y^k) , \cP_T^\perp x^* - \cP_T^\perp y^k \ra = 
		\la  \na f(y^k) ,  x^* -  y^k \ra + 
		\la  \cP_T \na f(y^k) , y^k - x^*   \ra \ ,
	\end{equation}
	we have
	\begin{equation}\label{ieq4}
		\begin{aligned}
			f(x^{k+1}) & \mathop{\le}\limits^{(i)} 
			\frac{2}{k+2} f(y^k) + \frac{k}{k+2} f(x^k) - \frac{\eta}{2} \frac{k}{k+2} \|\cP_T \na f(x^k) \|_2^2 + \frac{2C_{f,x^0}^S}{(k+2)^2} \\
			&~~~ +\frac{2}{k+2} \la  \na f(y^k) ,  x^* -  y^k \ra + 
			\frac{2}{k+2} \la  \cP_T \na  f(y^k) , y^k - x^*   \ra  \\
			& \mathop{\le}\limits^{(ii)} \frac{2}{k+2} f^* + \frac{k}{k+2} f(x^k) + \frac{2C_{f,x^0}^S}{(k+2)^2} \\
			& ~~~ + 
			\frac{2}{k+2} \la  \cP_T \na f(y^k) , y^k - x^*   \ra -  \frac{\eta}{2} \frac{k}{k+2} \|\cP_T \na f(x^k) \|_2^2 \ ,
		\end{aligned}
	\end{equation}
	where $(i)$ utilizes \eqref{combine2}, \eqref{ieq3} and \eqref{combine3}, and $(ii)$ 
	is because $f(y^k)  + \la \na f(y^k), x^* - y^k \ra \le f^*$ due to the convexity of $f$. Note that
	\begin{equation}\label{ieq5}
		\begin{aligned}
			& \la \cP_T \na f(y^k), y^k-x^* \ra \\
			&= 
			\la  \cP_T \na f(y^k) - \cP_T \na f(x^k) , y^k-x^* \ra + \la \cP_T \na f(x^k), y^k-x^* \ra \\
			&\mathop{\le}\limits^{(i)}
			L_{f,x^0}^T \|y^k-x^k\|_2 D_T + \|\cP_T \na f(x^k)\|_2 D_T \\
			& \mathop{=}\limits^{(ii)}
			\lt( L_{f,x^0}^T \eta  +1  \rt) \|\cP_T \na f(x^k)\|_2 D_T  \mathop{\le}\limits^{(iii)}
			2\|\cP_T \na f(x^k)\|_2 D_T \ ,
		\end{aligned}
	\end{equation}
	where in the above display, $(i)$ follows from the definitions of $D_T$ and $L_{f,x^0}^T$ in \eqref{def: DT} and \eqref{def: L} and the fact $\{y^k,x^*\}\subset X^0$, and $(ii)$ is due to the
	update rule in step (1) of Algorithm \ref{CG-G method}, and $(iii)$ utilizes the
	assumption $\eta \le 1/ L_{f,x^0}^T$. 
	By combining \eqref{ieq4} and \eqref{ieq5}, we arrive at
	\begin{equation}\label{ieq6}
		\begin{aligned}
			f(x^{k+1}) & \le 
			\frac{2}{k+2} f^* + \frac{k}{k+2} f(x^k) + \frac{2C_{f,x^0}^S}{(k+2)^2} \\
			& ~~~ + 
			\frac{4D_T}{k+2} \|\cP_T \na f(x^k)\|_2
			-  \frac{\eta}{2} \frac{k}{k+2} \|\cP_T \na f(x^k) \|_2^2
		\end{aligned} 
	\end{equation}
	for all $k\ge0$.
	For $k\ge 2$, using \eqref{ieq6} and the following inequality 
	$$
	\frac{8 D_T^2}{k(k+2)\eta}+ \frac{\eta}{2} \frac{k}{k+2} \|\cP_T \na f(x^k) \|_2^2\ge \frac{4D_T}{k+2} \|\cP_T \na f(x^k)\|_2 \ ,
	$$
	(which follows from $a^2+b^2\geq 2ab$ for scalars $a,b$), we have
	\begin{eqnarray}
		f(x^{k+1}) & \mathop{\le} & \frac{2}{k+2} f^* + \frac{k}{k+2} f(x^k) + \frac{2C_{f,x^0}^S}{(k+2)^2} + \frac{8D_T^2}{k(k+2) \eta} \nonumber\\
		&\mathop{\le}\limits^{(i)}& \frac{2}{k+2} f^* + \frac{k}{k+2} f(x^k) + \frac{2C_{f,x^0}^S+16  D_T^2/\eta}{(k+2)^2} \ .  \nonumber
	\end{eqnarray}
	Note that 
	inequality $(i)$ above makes use of  $k\ge2$. 
	
	Let $h_k:= f(x^k) - f^*$ and $C:= 2C_{f,x^0}^S+16  D_T^2/\eta$, then we have 
	\begin{equation}\label{eq:induction}
		h_{k+1} \le {k h_{k}}/{(k+2)} + {C}/{(k+2)^2}, \quad \forall k\ge 2 \ .
	\end{equation}
	To complete the proof, we will show that $h_k\le {C}/{(k+2)}$
	by induction. First, it follows from \eqref{ieq6} by choosing $k=0$ that
	$$
	h_1 \le (1/2) C_{f,x^0}^S + 2D_T \|\cP_T \na f(x^0)\|_2\ .
	$$
	Let $w^0$ be a minimizer of $f(x)$ on the affine subspace $x^0 + T$ (the existence of a minimizer is guaranteed by the assumption that $D_T<\infty$), then it holds that $w^0\in X^0$, $w^0-x^0 \in T$ and $\cP_T \na f(w^0) = 0$. Thus we have
	\begin{equation}\label{eq--1}
		\begin{aligned}
			h_1 &\le (1/2) C_{f,x^0}^S +2D_T \|\cP_T \na f(x^0)- \cP_T \na f(w^0)\|_2 
			\\
			& \le (1/2) C_{f,x^0}^S + 2 L_{f,x^0}^T D_T^2 \le C/3 \ .
		\end{aligned}
	\end{equation}
	Again, using \eqref{ieq6} with $k=1$, we have
	\begin{equation}\label{eq--2}
		\begin{aligned}
			h_2 &\le (1/3) h_1 + (2/9)  C_{f,x^0}^S + (4/3) D_T \| \cP_T \na f(x^1) \|_2 \\
			&\le (1/3) \lt(  (1/2) C_{f,x^0}^S + 2 L_{f,x^0}^T D_T^2\rt) + (2/9) C_{f,x^0}^S +
			(4/3)L_{f,x^0}^T D_T^2 \\
			&\le C/4.
		\end{aligned}
	\end{equation}
	Inequalities \eqref{eq--1} and \eqref{eq--2} imply that
	$h_k\le C/(k+2)$ holds for $k=1, 2$. Now suppose $h_k\le C/(k+2)$ holds for some $k\ge 2$, then it follows from \eqref{eq:induction} that $h_{k+1} \le {C}/{(k+3)}$. 
	Therefore, $h_k \le C/(k+2)$ $\forall k \ge1$ by induction---this finishes the proof by substituting the value of $C$.\qed
	
}

	\smallskip
	\noindent
	\textbf{Proof of Proposition \ref{prop: stop-criterion-conv}.}
	Let $\tau_k:= 2/(k+2)$ and $\wtd H_k := \| \cP_{T} \na f(x^k) \|_2 $ (note that $\wtd H_k$ is different from $H_k$). From \eqref{combine2}, \eqref{ieq3} and noting that $\cP_T x^k = \cP_T y^k  $, we have 
	\begin{equation}\label{new-proof-rev-ineq-1}
		f(x^{k+1}) \le f(x^k) - \tau_k G_k + ({\tau_k^2}/{2}) C_{f,x^0}^S - (\eta/2)\wtd H_k^2 \ . 
	\end{equation}
	Summing inequality~\eqref{new-proof-rev-ineq-1} from $k=m$ to $k=2m+1$ and rearranging terms we have 
	\begin{equation}\label{ineq--new1}
		\sum_{k=m}^{2m-1} (\tau_k G_k + (\eta/2)\wtd H_k^2) \le 
		f(x^m) - f(x^{2m-1}) + \sum_{k=m}^{2m-1} \frac{\tau_k^2}{2} C_{f,x^0}^S \ .
	\end{equation}
	Note that 
	\begin{equation}\label{ineq--new2}
		\sum_{k=m}^{2m-1} \frac{\tau_k^2}{2} =  \sum_{k=m}^{2m-1} \frac{2}{(k+2)^2} \le 2\sum_{k=m}^{2m-1} \Big( \frac{1}{k+1} - \frac{1}{k+2} \Big) \le \frac{2}{m+1} \ .
	\end{equation}
	Combining \eqref{ineq--new1}, \eqref{ineq--new2}, and noting that $f(x^m) - f(x^{2m-1}) \le  f(x^m) - f^*$, we have 
	\begin{equation}\label{ineq--new3}
		\sum_{k=m}^{2m-1} (\tau_k G_k + (\eta/2)\wtd H_k^2) \le f(x^m) - f^* + \frac{2C_{f,x^0}^S}{m+1} \ .
	\end{equation} 
	By \eqref{ineq--new3} and the conclusion of Theorem \ref{conv-rate-Cf} we have 
	\begin{equation}\label{ineq--new4}
		\sum_{k=m}^{2m-1} (\tau_k G_k + (\eta/2)\wtd H_k^2) \le \frac{2C_{f,x^0}^S + 16  D_T^2 / \eta}{m+2} + \frac{2C_{f,x^0}^S}{m+1} \le \frac{4C_{f,x^0}^S + 16  D_T^2 / \eta}{m+1} \ .
	\end{equation} 
	Note that 
	\begin{equation}\label{ineq--new4.5}
		\sum_{k=m}^{2m-1} \tau_k = \sum_{k=m}^{2m-1} \frac{2}{k+2} \ge \int_{m}^{2m} \frac{2}{t+2} dt = 2 \log\frac{2m+2}{m+2} \ge 2 \log(4/3) \ .
	\end{equation}
	So by \eqref{ineq--new4} and \eqref{ineq--new4.5}, there exists some $m\le j \le 2m-1$ satisfying
	\begin{equation}\label{ineq--new5}
		G_j + \eta \wtd H_j^2 /(2\tau_j)  \le \Big( \sum_{k=m}^{2m-1} \tau_k  \Big)^{-1} \sum_{k=m}^{2m-1} (\tau_k G_k + (\eta/2)\wtd H_k^2)
		\le \frac{2C_{f,x^0}^S + 8  D_T^2 / \eta}{(m+1)\log(4/3)} \ .
	\end{equation}
	The first 
	inequality~\eqref{ineq1: stop-criterion-conv} follows immediately from \eqref{ineq--new5}. To prove the second inequality of this proposition, note that 
	\begin{equation}\label{ineq--new7}
		\begin{aligned}
			H_j - \wtd H_j &\le \| \cP_T \na f(x^j) - \cP_T \na f(y^j) \|_2 \le L_{f,x^0}^T \| x^j - y^j \|_2 \\
			&\le
			L_{f,x^0}^T \eta \| \cP_T \na f(x^j) \|_2 = L_{f,x^0}^T \eta \wtd H_j \ .
		\end{aligned}
	\end{equation}
	Combining \eqref{ineq--new5} and \eqref{ineq--new7} we have 
	\begin{equation}\label{ineq--new8}
		H_j \le (1+\eta L_{f,x^0}^T) \wtd H_j \le (1+\eta L_{f,x^0}^T) \Big( \frac{2\tau_j}{\eta} \frac{2C_{f,x^0}^S + 8  D_T^2 / \eta}{(m+1)\log(4/3)}  \Big)^{1/2} \ .
	\end{equation}
	Recall that $\tau_j = 2/(j+2) \le 2/(m+1)$, so by \eqref{ineq--new8} we arrive at~\eqref{ineq2: stop-criterion-conv}, which completes the proof. \qed

	\textbf{Proof of Proposition \ref{prop: primal-gap-with-GH}.}
	Let $x^*$ be an optimal solution of \eqref{problem1}. 
	By the convexity of $f(\cdot)$, we have 
	\begin{equation}
		\begin{aligned}
			f(y^k) - f^* &\le \la \na f(y^k), y^k - x^* \ra \\
			&= \la \na f(y^k), \cP_T^\perp y^k - \cP_T^\perp x^* \ra + 
			\la \cP_T \na f(y^k), \cP_T (y^k -  x^*)\ra \\
			&\le 
			\la \na f(y^k), \cP_T^\perp y^k -  s^k \ra + \| \cP_T \na f(y^k)\|_2 \| \cP_T (y^k -  x^*)\|_2 \\
			&\le G_k + H_k D_T \ ,
		\end{aligned}
	\end{equation}
	where the second inequality is by the definition of $s^k$ and the third inequality is by the definition of $D_T$. 
	
	If in addition $f(\cdot)$ is $\mu$-strongly convex with respect to $\|\cdot\|_2$, then 
	\begin{equation}
		\begin{aligned}
			f(y^k) - f^* &\le \la \na f(y^k), y^k - x^* \ra - \frac{\mu}{2} \| y^k - x^* \|_2^2 \\
			&= \la \na f(y^k), \cP_T^\perp y^k - \cP_T^\perp x^* \ra + 
			\la \cP_T \na f(y^k),  y^k -  x^*\ra - \frac{\mu}{2} \| y^k - x^* \|_2^2 \\
			&\le 
			\la \na f(y^k), \cP_T^\perp y^k -  s^k \ra +  \| \cP_T \na f(y^k)\|^2_2/(2\mu)\\
			&\le G_k + H_k^2/(2\mu)
		\end{aligned}
		\nonumber
	\end{equation}
	where the second inequality uses Cauchy-Schwarz inequality and the definition of $s^k$.

\subsection{Linear Rate for uAFW}\label{subsection: analysis of uAFW}

In this section, we establish the global linear convergence of uAFW (Algorithm \ref{CG-G with away steps}).

The standard linear convergence result for away-step Frank-Wolfe method relies on a suitable curvature constant of $f$ with respect to an extended constraint set~\cite{lacoste2015global} (equivalently, it is the relative smoothness constant to the constraint set presented in~\cite{gutman2019condition}). Here we generalize this notion to the setting of~\eqref{problem1}.
\begin{definition}
	We define $\bar C_{f,x^0}^S$, the curvature constant of $f$ with respect to an extended constraint set along the set $S$ as follows:
	\begin{equation}
		\begin{aligned}
			\bar C_{f,x^0}^S:=
			\sup_{\substack{s\in S, \  x,y\in \R^n \\\al\in [-1,1] \backslash \{0\}}} 
			\big\{ & ({2}/{\al^2})\lt(  f(y) - f(x) - \la  \na f(x), y-x \ra  \rt)    ~\big| \\
			& ~~~~	\cP_T^{\perp} x \in S,\ f(x)\le f(x^0),\ 
			y = x+ \al (s - \cP_T^\perp x)
			\big\} \ .
		\end{aligned}
		\nonumber
	\end{equation}
\end{definition}

Note that $\bar C_{f,x^0}^S$ is different from
$ C_{f,x^0}^S$ defined in \eqref{def: Cf}.
In the definition of $\bar C_{f,x^0}^S$, $\al$ is allowed to take values in $[-1,1] \backslash \{0\}$, while in the definition of $ C_{f,x^0}^S $ it takes values in $(0,1]$. In other words, $\bar C_{f,x^0}^S$ quantifies the curvature of $f$ with respect to an extended set $\bar S:= S+(S-S)$ (i.e., the Minkowski sum of $S$ and $S-S$). 
In the special case when $f$ is $L$-smooth over $\bar S\oplus T$, if we define  $\bar D:= \diam (\bar S)$, then we have $\bar C_{f,x^0}^S \le L \bar D^2 $.
Intuitively, $\bar C_{f,x^0}^S$ 
should
appear in the computational guarantees for uAFW
because the algorithm not only moves towards the vertices but also away from vertices, and $\bar C_{f,x^0}^S$ provides an upper bound on the curvature information of $f$ when moving in these directions.

To obtain a linear rate for uAFW we also need to assume strong convexity of the objective function.

\begin{definition}\label{strong-convex-def}
	Let X be a convex set in $\R^n$. For $\mu>0$, we say a function f is $\mu$-strongly convex on $X$ with respect to a norm $\|\cdot\|$
	if for all $x,y\in X$, we have
	$$
	f(y) - f(x) - \la \na f(x),y-x \ra \ge (\mu/2) \|y-x\|^2 \ .
	$$
\end{definition}

Finally, following \cite{pena2019polytope}, we define the facial distance $\psi(S)$ of the polyhedral set $S$ as follows:
\begin{equation}\label{facial-distance-defn}
	\psi(S) := \min_{\mbox{$\scriptsize{
				\begin{array}{c}
					F \in \text{faces}(S), \\
					\emptyset \neq F \neq S
				\end{array}
			} $}}
	\text{dist}(F , \text{conv}(V(S) \backslash F)) \ ,
\end{equation}
where $V(S)$ denotes the set of all vertices of $S$, $\text{faces}(S)$ the set of all faces of $S$; and $V(S) \backslash F$ means $V(S) \cap F^c$.
Note $\psi(S)$
appears in the computational guarantees of the classical away-step Frank-Wolfe algorithm in the analysis of~\cite{pena2019polytope}. It is strictly positive for a polyhedral $S$, but may be zero for a general convex set $S$.

\smallskip

We make the following assumptions to establish the linear convergence rate.
\begin{assumption}\label{awaystep-ass}
	(1) $S$ is a bounded polyhedron.
	
	(2)	$X^0$ is bounded. \quad
	
	(3) $f$ is $L$-smooth over the extended constraint $\bar S \oplus T$ with $\bar S:= S+(S-S) = \{x+(y-z)~|~x,y,z \in S\}$.

	(4) $f$ is $\mu$-strongly convex on $S\oplus T$ with respect to $\|\cdot\|_2$.

\end{assumption}

As discussed before, condition (3) of Assumption \ref{awaystep-ass} guarantees that the parameter $\bar C_{f,x^0}^S $ is bounded above. 
The next theorem presents the linear convergence of 
Algorithm \ref{CG-G with away steps}.

\begin{theorem}\label{CGG-linear-conv}
	Suppose that $f$ is convex and continuously differentiable, and satisfies Assumption \ref{awaystep-ass}. 
	Let $\{(x^k,y^k)\}_{k\ge0}$ be the sequence generated by Algorithm \ref{CG-G with away steps} by setting $\eta \le 1/L$; and recall that $f^*$ is the optimal objective value. Let $\sigma~=~\mu\psi(S)^2 / 4$,
	then we have
	\begin{equation}\label{CGG-linear-conv-eq}
		f(y^k) - f^* \le \left(1- \min\lt\{1/2, {\sigma}/{ \bar C_{f,x^0}^S}\rt\} \frac{\mu\eta}{4} \right)^{k/2} (f(y^0) - f^*) \ .
	\end{equation}
\end{theorem}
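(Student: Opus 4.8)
The plan is to track the optimality gap $f(y^k)-f^*$ and show it contracts by a constant factor on a constant fraction of the iterations. As in the analysis of uFW, exact line search (and $\eta\le 1/L$) guarantees that $f(x^0)\ge f(y^0)\ge f(x^1)\ge\cdots$ is non-increasing, so all iterates stay in $X^0$; by Assumption \ref{awaystep-ass} this makes $\bar C_{f,x^0}^S<\infty$ and $\psi(S)>0$. I would isolate two sources of per-iteration progress: the gradient-descent substep reduces $f$ along $T$, while the Frank--Wolfe/away substep reduces $f$ along $S\subset T^\perp$. The overall rate should be governed by whichever mechanism is slower, which is the origin of the $\min\{1/2,\sigma/\bar C_{f,x^0}^S\}$ factor, while the extra $\mu\eta/4$ comes from quantifying progress through strong convexity.

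First I would record the two decrease inequalities. For the gradient-descent substep, the argument of Lemma \ref{lemma-zero} with $\eta\le 1/L$ gives, exactly as in the first line of \eqref{combine2},
\[
f(y^k)\le f(x^k)-\tfrac{\eta}{2}\|\cP_T\na f(x^k)\|_2^2 .
\]
For the Frank--Wolfe/away substep, I would use $L$-smoothness over $\bar S\oplus T$ and the curvature constant $\bar C_{f,x^0}^S$ to bound $f(y^k+\al d^k)\le f(y^k)-\al g^k+\tfrac{\al^2}{2}\bar C_{f,x^0}^S$ for $\al\in[0,\al_{\max}]$, where $g^k:=\max\{g_{\mathrm{FW}}^k,g_{\mathrm{A}}^k\}=-\la\na f(y^k),d^k\ra\ge0$ is the larger of the Frank--Wolfe and away gaps selected in Step (3). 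Exact line search then yields the standard two-case bound: either $f(y^k)-f(x^{k+1})\ge (g^k)^2/(2\bar C_{f,x^0}^S)$ (a ``good step'', when the unconstrained minimizer lies in $[0,\al_{\max}]$), or the step is capped at $\al_{\max}$, which for a Frank--Wolfe step ($\al_{\max}=1$) still gives $f(y^k)-f(x^{k+1})\ge g^k/2$ and for an away step is a ``drop step'' that removes a vertex from $V(x^k)$.

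The crucial lower bound on $g^k$ comes from the facial-distance inequality of \cite{pena2019polytope}, applied inside $T^\perp$ to the point $\cP_T^\perp x^k=\cP_T^\perp y^k\in S$, the vertices $s^k,v^k$, and the functional $\cP_T^\perp\na f(y^k)$:
\[
g^k\ \ge\ \frac{\psi(S)}{2}\,\frac{\la\cP_T^\perp\na f(y^k),\,\cP_T^\perp y^k-\cP_T^\perp x^*\ra}{\|\cP_T^\perp y^k-\cP_T^\perp x^*\|_2}.
\]
I would then convert directional derivatives into the gap by decomposing $\la\na f(y^k),y^k-x^*\ra$ into its $T$- and $T^\perp$-parts, bounding the $T$-part by $\|\cP_T\na f(y^k)\|_2\,\|y^k-x^*\|_2$ and the $T^\perp$-part through the displayed inequality, and using $\mu$-strong convexity in the form $f(y^k)-f^*\le\la\na f(y^k),y^k-x^*\ra-\tfrac{\mu}{2}\|y^k-x^*\|_2^2$ followed by maximization over $\|y^k-x^*\|_2$ (an AM--GM step). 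This yields an estimate of the shape $f(y^k)-f^*\lesssim \tfrac1\mu\|\cP_T\na f(y^k)\|_2^2+(g^k)^2/\sigma$ with $\sigma=\mu\psi(S)^2/4$; since $y^k=x^k-\eta\cP_T\na f(x^k)$ and $\eta\le 1/L$, smoothness gives $\|\cP_T\na f(y^k)\|_2\le 2\|\cP_T\na f(x^k)\|_2$, so the right-hand side is controlled by the very quantities $\|\cP_T\na f(x^k)\|_2^2$ and $(g^k)^2$ that appear in the two decrease inequalities. Matching coefficients on a good step then produces a one-step contraction of the form $f(x^k)-f(x^{k+1})\ge \rho\,(f(y^k)-f^*)$, where the balancing of the along-$T$ and along-$S$ terms fixes the constant $\rho=\min\{1/2,\sigma/\bar C_{f,x^0}^S\}\cdot\mu\eta/4$.

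Finally I would run the standard away-step counting argument: each Frank--Wolfe substep adds at most one vertex to the active set and each drop step removes one, while $|V(x^0)|=1$, so at most $k/2$ of the first $k$ substeps are drop steps; on the remaining (at least $k/2$) good steps the gap contracts by $(1-\rho)$, and on drop steps $f$ does not increase. Telescoping then gives $f(y^k)-f^*\le(1-\rho)^{k/2}(f(y^0)-f^*)$, which is \eqref{CGG-linear-conv-eq}. The main obstacle is the coupling of the two substeps in the third step: the Frank--Wolfe gaps are evaluated at $y^k$ (and only its $T^\perp$ component enters), whereas strong convexity and the gradient-descent decrease naturally involve the gradient at $x^k$, so the $T$/$T^\perp$ decomposition and the smoothness-based transfer between $\na f(x^k)$ and $\na f(y^k)$ must be carried out carefully. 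It is precisely this balancing of the along-$T$ and along-$S$ progress, together with the index bookkeeping needed to close the recursion in $f(y^k)$, that determines both the constant $\min\{1/2,\sigma/\bar C_{f,x^0}^S\}\mu\eta/4$ and the $k/2$ exponent.
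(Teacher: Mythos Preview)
Your overall strategy is sound and yields the theorem up to constants, but it is genuinely different from the paper's argument. The paper first reduces to an axis-aligned problem $\bar f(u,w)$ with $u\in S$, $w\in\R^r$, introduces the marginal function $F(w)=\min_{u\in S}\bar f(u,w)$, and uses Danskin's theorem to show $F$ is $\mu$-strongly convex. The Frank--Wolfe substep is then analyzed relative to $F(w^{k+1})$ (Proposition~\ref{FW-contraction}), and Proposition~\ref{prop:combining} couples the two substeps by a \emph{case split}: either $F(w^{k+1})-f^*\le(1-\tau)\bigl(\bar f(u^k,w^{k+1})-f^*\bigr)$, in which case the Frank--Wolfe contraction alone suffices, or the opposite holds, in which case $\|\na_w\bar f(u^{k+1},w^{k+1})\|$ is provably close to $\|\na F(w^{k+1})\|$ and the gradient step gives the required decrease via $\mu$-strong convexity of $F$. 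This yields a clean one-step recursion directly in $f(y^k)-f^*$. Your route bypasses $F$ and Danskin entirely by decomposing $\la\na f(y^k),y^k-x^*\ra$ into its $T$- and $T^\perp$-parts and invoking the facial-distance inequality on the $T^\perp$-part; this is more elementary and arguably more transparent about where the constant $\sigma=\mu\psi(S)^2/4$ comes from.

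Two places in your write-up need tightening. First, the full Frank--Wolfe step case ($\al_k=\al_{\max}=1$) is mentioned but not integrated into the contraction: the decrease $g^k/2$ does \emph{not} dominate $\rho\cdot(g^k)^2/(2\sigma)$ for fixed $\rho$ when $g^k$ is large, so you cannot reuse the quadratic gap bound there. You should instead use the linear bound $\la\cP_T^\perp\na f(y^k),\cP_T^\perp(y^k-x^*)\ra\le G_k\le g^k$ (definition of $s^k$), which gives $f(y^k)-f^*\lesssim\mu^{-1}\|\cP_T\na f(x^k)\|_2^2+g^k$ and hence a contraction factor of order $\mu\eta$ in that case. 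Second, your telescoping is loose: from $f(x^k)-f(x^{k+1})\ge\rho\,(f(y^k)-f^*)$ one gets (using $f(x^{k+1})\le f(y^k)$) only $(1+\rho)\bigl(f(x^{k+1})-f^*\bigr)\le f(x^k)-f^*$, i.e.\ a contraction in $f(x^k)-f^*$ with factor $(1+\rho)^{-1}$ and initial value $f(x^0)-f^*$, not the recursion in $f(y^k)-f^*$ with factor $1-\rho$ and initial value $f(y^0)-f^*$ that the theorem asserts. The paper's $F$-based case split avoids this by producing the recursion $f(y^{k+1})-f^*\le(1-\rho')\bigl(f(y^k)-f^*\bigr)$ directly. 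Your route still delivers the same linear rate up to constants, but the exact constant $\min\{1/2,\sigma/\bar C_{f,x^0}^S\}\cdot\mu\eta/4$ does not fall out of the matching-coefficients step as written; what you actually obtain is $\min\{c\,\mu\eta,\sigma/\bar C_{f,x^0}^S\}$ for some absolute $c$.
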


\begin{remark}
	If we set the stepsize $\eta = 1/L$ (recall, $f$ is $L$-smooth over $\bar S \oplus T $),
	the linear convergence rate in \eqref{CGG-linear-conv-eq} depends on two ratios: $\sigma/ \bar C_{f, x^0}^S$ and $ \mu/L$. The first ratio is in accordance with the linear rate parameter for the classical away-step Frank-Wolfe algorithm, while the second one is the rate for classical gradient descent. 
\end{remark}

	\subsubsection{Practical termination rules}
	Similar to Section~\ref{terminate-rules-FW-1}, we can use quantities 
	$G_k$ and $H_k$ defined in \eqref{def: Gk and Hk} (with $y^k$ being the iterations produced by uAFW) to derive a practical termination criteria for uAFW. 
	The following proposition which is a variant of Theorem 2 in \cite{lacoste2015global}
	presents the rate at which $G_k$ and $H_k$ converge to zero. 
	\begin{proposition}\label{prop: stop-criterion-conv2}
		Under the same assumption of Theorem \ref{CGG-linear-conv}, it holds:
		
		(1) $H_k \le (1+ \eta L^T_{f,x^0}) \sqrt{(2/\eta)(f(x^k) - f^*)}$.
		
		(2) 
		Among the first $k$ iterations of Algorithm \ref{CG-G with away steps}, there are at least $\lfloor k/2 \rfloor$ iterations satisfying: 
		\begin{equation}\label{ineq: stop-criterion-conv2}
			G_k \le \min \Big\{ 2(f(y^k) - f^*), \sqrt{2 \bar C_{f,x^0}^S (f(y^k)-f^*) } \Big\}. 
		\end{equation}
	\end{proposition}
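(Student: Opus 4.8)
The plan is to handle the two parts separately, reusing the one-step descent estimate \eqref{combine2} established for the gradient-descent step and adapting the classical away-step bookkeeping of \cite{lacoste2015global,pena2019polytope} to the alternating structure of uAFW.

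For part (1), I would first control $H_k=\|\cP_T\na f(y^k)\|_2$ by the companion quantity $\wtd H_k=\|\cP_T\na f(x^k)\|_2$ at the pre-gradient iterate. Since the gradient step gives $y^k-x^k=-\eta\,\cP_T\na f(x^k)\in T$ and $x^k,y^k\in X^0$ (the exact line search keeps $f$ nonincreasing, so $f(x^k)\le f(x^0)$ and $f(y^k)\le f(x^k)$), the Lipschitz definition \eqref{def: L} yields
\[
H_k\le \|\cP_T\na f(y^k)-\cP_T\na f(x^k)\|_2+\wtd H_k\le L^T_{f,x^0}\|y^k-x^k\|_2+\wtd H_k=(1+\eta L^T_{f,x^0})\wtd H_k .
\]
It then remains to bound $\wtd H_k$ by the primal gap. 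Here I would invoke \eqref{combine2}, which holds verbatim for uAFW because $\eta\le 1/L\le 1/L^T_{f,x^0}$, giving $f(y^k)\le f(x^k)-(\eta/2)\wtd H_k^2$; combined with $f(y^k)\ge f^*$ this gives $\wtd H_k\le\sqrt{(2/\eta)(f(x^k)-f^*)}$, and substituting into the previous display proves (1).

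For part (2) the first task is the combinatorial step count. Following \cite{lacoste2015global}, I would classify each iteration by Steps (3)--(5): a \emph{drop step} is an away step with $\al_k=\al_{\max}$, which deletes the vertex $v^k$ from $V(x^k)$ through \eqref{update-vertex2}, and all other iterations I call \emph{good}. By the updates \eqref{update-vertex1}--\eqref{update-vertex2}, a good step enlarges $|V(x^k)|$ by at most one while a drop step decreases it by exactly one; since $|V(x^0)|=1$ and $|V(x^k)|\ge 1$ throughout, the number of drop steps among the first $k$ iterations cannot exceed the number of good steps, so there are at least $\lfloor k/2\rfloor$ good steps. The second task is to verify the $G_k$ bound on every good step. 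Let $g_k:=\max\{G_k,\ \la\na f(y^k),v^k-\cP_T^\perp x^k\ra\}$ be the gap of the direction $d^k$ actually selected in Step (3), so that $\la\na f(y^k),d^k\ra=-g_k$ and $g_k\ge G_k$ by the selection rule. Applying the curvature definition of $\bar C_{f,x^0}^S$ (which admits $\al\in[-1,1]\backslash\{0\}$, thereby covering both the Frank--Wolfe and the away directions) along $d^k$ gives, for every admissible $\al$,
\[
f(y^k+\al d^k)\le f(y^k)-\al g_k+(\al^2/2)\,\bar C_{f,x^0}^S ,
\]
and a standard computation as in \cite{lacoste2015global} shows that on a good step the exact line search (where $\al_k$ does not reach the away boundary) guarantees a decrease $f(y^k)-f(x^{k+1})\ge\min\{g_k/2,\ g_k^2/(2\bar C_{f,x^0}^S)\}$.

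Finally I would combine this with $f(x^{k+1})\ge f^*$ to get $\min\{g_k/2,\ g_k^2/(2\bar C_{f,x^0}^S)\}\le f(y^k)-f^*$; the two regimes $g_k\le\bar C_{f,x^0}^S$ and $g_k>\bar C_{f,x^0}^S$ then yield $g_k\le\sqrt{2\bar C_{f,x^0}^S(f(y^k)-f^*)}$ and $g_k\le 2(f(y^k)-f^*)$ respectively, and since $G_k\le g_k$ the same two bounds transfer to $G_k$, giving exactly \eqref{ineq: stop-criterion-conv2}. The two descent estimates are routine; the genuinely delicate points are (i) checking that the gradient-descent step along $T$ does not perturb the vertex bookkeeping (it alters only the $T$-component and leaves $\cP_T^\perp x^k=\cP_T^\perp y^k$ and the active set $V(x^k)$ untouched, so the classical counting applies unchanged), and (ii) ensuring the curvature estimate is legitimately used along the away direction, which is precisely why $\bar C_{f,x^0}^S$ rather than $C_{f,x^0}^S$ must appear. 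I expect the step-counting/bookkeeping to be the true crux, since that is where our alternating $T$-/$S$-structure must be reconciled with the away-step drop argument.
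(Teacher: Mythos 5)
Your proposal is correct and follows essentially the same route as the paper's Appendix B proof: part (1) via the chain $H_k \le (1+\eta L^T_{f,x^0})\|\cP_T\nabla f(x^k)\|_2$ combined with the descent estimate \eqref{combine2}, and part (2) via the drop-step counting argument of Proposition \ref{FW-contraction} together with the standard quadratic-model descent bound $\min\{g_k/2,\, g_k^2/(2\bar C_{f,x^0}^S)\}\le f(y^k)-f^*$ and $G_k\le g_k$. (Your case split $g_k\le \bar C_{f,x^0}^S$ versus $g_k>\bar C_{f,x^0}^S$ is the same as the paper's split on $\bar\alpha_k<1$ versus $\bar\alpha_k=1$, and your final passage from ``one bound in each regime'' to the stated $\min$ mirrors the same step taken in the paper's own proof.)
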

	The proof of Proposition \ref{prop: stop-criterion-conv2} is relegated to Appendix \ref{appendix sec: proof of prop-stop-criterion-conv2}. In light of Theorem \ref{CGG-linear-conv}, Proposition \ref{prop: stop-criterion-conv2} and noting that $f(x^{k+1}) \le f(y^k)$, it can be seen that $G_k$ and $H_k$ converge to $0$ with a linear rate. Hence we can terminate Algorithm \ref{CG-G with away steps} when $G_k$ and $H_k$ are smaller than a tolerance value. \\
	The conclusions of Proposition \ref{prop: primal-gap-with-GH} also hold true for Algorithm \ref{CG-G with away steps}. These results can be used to compute upper bounds on $f(y^k) - f^*$ if $D_T$ or $\mu$ is known.

\subsection{Proof of Theorem \ref{CGG-linear-conv}}

We divide the proof of Theorem \ref{CGG-linear-conv} into the following three steps.

\smallskip

\noindent \textbf{Step 1. A reduction to the axis-aligned case:} Notice that both Frank-Wolfe and gradient descent are invariant under orthogonal transformations; and so is uAFW. 
 That is, given an orthogonal transformation $Q$, 
	suppose $\{x^k,y^k\}_{k=0}^{\infty}$ are the  iterations generated by uAFW for the objective function $f(\cdot)$, constraint set $T\oplus S$ 
	and initialization $x^0$. Suppose 
	$\{\td x^k, \td y^k\}_{k=0}^{\infty}$ are the iterations generated by uAFW for the objective function $g(\cdot) := f(Q^{-1}\cdot)$, constraint set $Q(T) \oplus Q(S)$ (where $Q(T):= \{Qx~|~ x\in T\}$ and $Q(S)$ is defined similarly) and initialization $\td x^0 = Qx^0$. Then it holds $\td x^k = Qx^k$ and $\td y^k = Q y^k$ for all $k\ge 0$. Assumption \ref{awaystep-ass} also holds for the function and constraints obtained after the orthogonal transformation.
Thus without loss of generality, we can assume that $\{e_{n-r+1},e_{n-r+2},\ldots, e_n\}$ is a basis of the $r$-dimensional subspace $T$.
Then we can convert the original problem \eqref{problem1} to the following form:
\begin{equation}\label{problem1-eq3}
	\min_{u,w} \ \bar f(u,w) ~~~ {\rm s.t.}~~~u \in S\subset \R^{n-r} \ , w \in \R^{r} \ ,
\end{equation}
where $S\subset \R^{n-r}$ is a bounded polyhedral set (with a slight abuse, we are reusing the notation $S$). We refer to~\eqref{problem1-eq3} as the \emph{axis-aligned form}. Although the transformed problem~\eqref{problem1-eq3} is equivalent to the original problem~\eqref{problem1}, in practice it can be computationally expensive to perform such transformation. We
use formulation~\eqref{problem1-eq3} 
to help facilitate our proof.

Let $\na_u \bar f(u,w)$ and $\na_w \bar f(u,w)$  denote the sub-vectors of $\na \bar f(u,w)$ corresponding to the derivatives of $u$ and $w$ respectively. 
Then we can rewrite uAFW (Algorithm \ref{CG-G with away steps}) as Algorithm~\ref{Away-step CG-G (normal form)}.

\begin{algorithm}
	\caption{uAFW for axis-aligned form problem \eqref{problem1-eq3}}
	\label{Away-step CG-G (normal form)}
	\begin{algorithmic}
		\STATE Starting from $(u^{0},w^{0})$ such that $u^0$ is a vertex of $ S $.
		\STATE For $k=0,1,2, \dots$:
		\STATE (1) $w^{k+1} = w^k - \eta \na_w \bar f(u^k,w^k) $.
		\STATE (2) 
		$s^{k}:= \arg\min_{s\in S}\la\na_u \bar f(u^k,w^{k+1}),s\ra$,
		$v^k:= \argmax_{v \in V(u^k)} \la \na_u \bar f(u^k,w^{k+1}) ,v \ra $.
		\STATE \qquad \textbf{if} 
		$ \la \na_u \bar f(u^k,w^{k+1}) , s^k - u^k \ra < 
		\la \na_u \bar f(u^k,w^{k+1}), u^k - v^k \ra $: (Frank-Wolfe step)
		\STATE \qquad \quad $d^k:= s^k - u^k$; $\al_{\max} = 1$.
		\STATE \qquad \textbf{else}: (away step)
		\STATE \qquad \quad $d^k:= u^k - v^k$; $\al_{\max} = {\lam_{v^k}(u^k)  }/({1- \lam_{v^k}(u^k)})$.
		\STATE (3)  $u^{k+1} = u^{k}+\alpha_{k}d^k$ ; 
		where $\alpha_{k} \in \argmin\nolimits_{\al\in[0,\al_{\max}]} f(u^k + \al d^k, w^{k+1} )$.
		\STATE (4) Update $V(u^{k+1})$ and $\lam(u^{k+1})$.
	\end{algorithmic}
\end{algorithm}

Note that in Algorithm \ref{Away-step CG-G (normal form)}, we use $V(u^{k+1})$ and $\lam(u^{k+1})$ in place of the corresponding $V(x^{k+1})$ and $\lam(x^{k+1})$ in Algorithm \ref{CG-G with away steps} because their values only depend on $u^{k+1}$. 
More specifically, we update $V(u^{k+1})$ and $\lam(u^{k+1})$ in the following way: For a Frank-Wolfe step, let $V(u^{k+1}) = \{s^k\}$ if $\al_k=1$; otherwise $V(u^{k+1}) = V(u^k) \cup \{s^k\}$. Meanwhile, we set $ \lam_{s^k}(u^{k+1}) := (1-\al_k) \lam_{s^k} (u^k) + \al_k $ and $\lam_v(u^{k+1}) = (1-\al_k) \lam_v (u^{k+1}) $ for $v\in V(u^k) \backslash \{s^k\}$. For an away step, we update $V(u^{k+1}) = V(u^{k}) \backslash \{v^k\} $ if $\al_k = \al_{\max}$; otherwise we set $V(u^{k+1}) = V(u^k)$. We set $ \lam_{v^k} (u^{k+1}) := (1+ \al_k) \lam_{v^k} (u^k) - \al_k $ and $ \lam_{v}(u^{k+1}) := (1+\al_k) \lam_v^{k} $ for $v\in V(u^k) \backslash \{v^k\}$.

\smallskip\smallskip

Below we present the analysis of  Algorithm \ref{Away-step CG-G (normal form)} for 
Problem~\eqref{problem1-eq3}.

\smallskip

\noindent \textbf{Step 2. Reduction in optimality gap via a Frank-Wolfe step:} We establish a reduction in optimality gap via the Frank-Wolfe step. This is an adaption of Proposition 9 in \cite{gutman2019condition} to the case of a function $\bar f(\cdot, w)$ with a dynamic $w$. To this end, we define $F: \R^{r} \rightarrow \R $ as $F(w) = \rminf_{u \in S} \bar f(u,w)$.

\begin{proposition}\label{FW-contraction}
	Consider Algorithm \ref{Away-step CG-G (normal form)} with initial solution $(u^0, w^0)$.  Define $\gamma := \mu \psi(S)^2 / (4\bar C_{f,x^0}^S) $ and $\rho:= \min \lt\{ 1/2, \ga\rt\}$.
	Then among iterations $1,\ldots,k$, there are at least $k/2$ of them satisfying
	$$
	\bar f(u^{k+1}, w^{k+1}) -  f^* \le (1-\rho) \lt( \bar f(u^k, w^{k+1}) -  f^* \rt) + \rho ( F(w^{k+1}) -  f^*) \ .
	$$
\end{proposition}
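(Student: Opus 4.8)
The plan is to observe that, once step (1) has produced $w^{k+1}$, steps (2)--(4) of Algorithm \ref{Away-step CG-G (normal form)} are exactly one iteration of the \emph{classical} away-step Frank--Wolfe method applied to the fixed objective $g_k(u) := \bar f(u, w^{k+1})$ over the polytope $S$, initialized at $u^k$ with active set $V(u^k)$ and weights $\lam(u^k)$. Since $F(w^{k+1}) = \min_{u\in S} g_k(u)$, subtracting $f^*$ from both sides of the claim and rearranging shows it is \emph{equivalent} to the single-step contraction
$$g_k(u^{k+1}) - F(w^{k+1}) \le (1-\rho)\,\big(g_k(u^k) - F(w^{k+1})\big).$$
Thus the whole statement reduces to importing the per-iteration guarantee of away-step FW for a strongly convex objective on a polytope (Proposition 9 of \cite{gutman2019condition}). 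The only genuinely new feature is that the objective $g_k$ and its minimum value $F(w^{k+1})$ change from iteration to iteration; but because the contraction is phrased relative to $F(w^{k+1})$, this causes no difficulty.

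Before invoking that result I would check the three regularity ingredients it needs, all restricted to the slice $w = w^{k+1}$. First, $g_k$ is $\mu$-strongly convex on $S$: restricting the jointly $\mu$-strongly convex $\bar f$ (Assumption \ref{awaystep-ass}(4)) to an affine slice preserves the modulus. Second, the curvature encountered by the algorithm is bounded by $\bar C_{f,x^0}^S$: taking $x = (u^k, w^{k+1})$, $y = (u^{k+1}, w^{k+1})$ and $\al \in [-1,1]\setminus\{0\}$ in the definition of $\bar C_{f,x^0}^S$ captures precisely the two candidate moves $d^k = s^k - u^k$ and $d^k = u^k - v^k$, the extended range $[-1,1]$ accounting for the away directions. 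Third, the definition of $\bar C_{f,x^0}^S$ requires the base point to lie in $X^0$; I would verify by induction that $\bar f(u^{k+1}, w^{k+1}) \le \bar f(u^k, w^{k+1}) \le \bar f(u^k, w^k)$, where the first inequality is from the line search in step (3) and the second is the descent lemma (Lemma \ref{lemma-zero}) applied to the $w$-gradient step under $\eta \le 1/L$, so that every iterate stays in the level set and $\bar C_{f,x^0}^S$ is simultaneously valid for all $g_k$.

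With these in place, the classical good-step analysis applies on any \emph{good step}---a Frank--Wolfe step or a non-maximal (non-drop) away step. There the strong Wolfe gap $\max\{\la \na_u \bar f(u^k,w^{k+1}), u^k - s^k\ra,\, \la \na_u \bar f(u^k,w^{k+1}), v^k - u^k\ra\}$ is bounded below in terms of the optimality gap and the facial distance $\psi(S)$ (this is where $\psi(S)$ enters), and combining this bound with $\mu$-strong convexity and the curvature constant $\bar C_{f,x^0}^S$ yields the single-step contraction above with $\rho = \min\{1/2,\ga\}$ and $\ga = \mu\psi(S)^2/(4\bar C_{f,x^0}^S)$. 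Adding $f^*$ back recovers the displayed inequality of the proposition verbatim.

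Finally, to obtain the count of ``at least $k/2$'' good steps, I would invoke the standard drop-step bookkeeping of \cite{lacoste2015global}: active vertices are created only on Frank--Wolfe steps and destroyed on drop steps, and since the active set starts as a singleton and never becomes empty, the number of drop steps among iterations $1,\dots,k$ cannot exceed the number of good steps, forcing at least $k/2$ good steps. The main obstacle is the third paragraph: one must confirm that the Gutman--Pena good-step estimate transfers unchanged to the moving-$w$ setting---namely that the geometric constant $\psi(S)$ (which depends only on $S$) and the uniform curvature bound $\bar C_{f,x^0}^S$ (a supremum over the entire level set, hence valid for every $g_k$ at once) together certify the same contraction factor $\rho$ for each fixed-slice AFW step, despite $g_k$ and $F(w^{k+1})$ varying across iterations.
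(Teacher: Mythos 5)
Your reduction to a fixed-slice classical away-step analysis is exactly how the paper proceeds: its entire proof is carried out at the frozen value $w = w^{k+1}$, with the facial distance $\psi(S)$ entering through a Wolfe-gap lower bound (Lemma \ref{square-bound-lemma-CGG}), the curvature constant $\bar C_{f,x^0}^S$ supplying the per-step upper bound, and the drop-step counting from \cite{lacoste2015global} giving the $k/2$ factor, so your approach is essentially the same. The only caveat is the one you flag yourself: the paper states that the needed single-step estimate does not appear in \cite{gutman2019condition} in citable form (it replaces relative strong convexity by a lower bound derived via a lifting to the simplex), so the ``import'' step cannot be a black-box citation and still requires reproducing that argument (the paper's Lemmas \ref{lemma: tdf-strong-convex}--\ref{square-bound-lemma-CGG}).
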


The proof of Proposition \ref{FW-contraction} is inspired by~\cite{gutman2019condition}---it turns out to be a bit technical and we relegate the proof to Appendix \ref{sec:appendix-prop}.

\smallskip

\noindent
\textbf{Step 3. Piecing together the Frank-Wolfe and gradient descent steps:} Here we establish the linear convergence rate of uAFW by combining the contraction of the gradient descent step along the unbounded space $T$, and the contraction of the Frank-Wolfe step along the bounded set $S$ (presented by Proposition \ref{FW-contraction}).

We first present a few technical lemmas that will be used in the proof. 
Recall that we defined $F: \R^{r} \rightarrow \R $ as $F(w) = \rminf_{u \in S} \bar f(u,w)$. Given $w\in \R^r$, we denote $ u_w^* \in \argmin_{u\in S} \bar f(u,w)$. In addition, let $w^*\in \argmin_{w\in \R^r} F(w)$ and $F^*=f^*= \min_{w\in \R^r} F(w)$.

\begin{lemma}\label{lemma:diff-under-inf} 
	Under Assumption \ref{awaystep-ass}, we have
	
	(1) $F$ is convex and differentiable over $\R^r$, with gradient $\na F(w) = \na_w \bar f(u^*_{w},w)$ .

	(2) $F$ is $\mu$-strongly convex on $\R^r$ with respect to the $\ell_{2}$-norm $ \| \cdot\|_2\  $.
	
	(3) For any $ w\in \R^r $, it holds 
	$F(w) - F^* \le \|\na F(w) \|_2^2/(2\mu) \ .$

\end{lemma}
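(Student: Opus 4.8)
The plan is to prove the three parts of Lemma~\ref{lemma:diff-under-inf} about the partial-minimization function $F(w) = \inf_{u\in S} \bar f(u,w)$, treating them in order since each builds on the previous one. The central tool throughout will be the strong convexity and smoothness of $\bar f$ from Assumption~\ref{awaystep-ass}, combined with the fact that for each fixed $w$ the minimizer $u_w^*$ exists and is unique (uniqueness follows from strong convexity of $\bar f(\cdot,w)$ on the bounded set $S$).

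\textbf{Part (1).} First I would establish convexity of $F$: this is the standard fact that partial minimization of a jointly convex function over one block of variables yields a convex function. Concretely, for $w_1, w_2$ and $\lambda\in[0,1]$, taking the convex combination $\lambda u_{w_1}^* + (1-\lambda) u_{w_2}^*$ as a feasible point for the minimization defining $F(\lambda w_1 + (1-\lambda)w_2)$ and using joint convexity of $\bar f$ gives the inequality. For differentiability with $\na F(w) = \na_w \bar f(u_w^*, w)$, the key is Danskin-type reasoning: since the inner minimizer $u_w^*$ is unique, $F$ is differentiable and its gradient is obtained by differentiating $\bar f(u_w^*, w)$ through the $w$-slot only, because the first-order optimality condition for $u_w^*$ (namely $\la \na_u \bar f(u_w^*,w), u - u_w^*\ra \ge 0$ for all $u\in S$) kills the contribution from the variation of $u_w^*$. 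I would argue this cleanly by bounding $F(w') - F(w) - \la \na_w\bar f(u_w^*,w), w'-w\ra$ from both sides: the upper bound uses $u_w^*$ as a suboptimal point for $F(w')$, and the lower bound uses $u_{w'}^*$ together with the optimality inequality at $w$.

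\textbf{Parts (2) and (3).} For $\mu$-strong convexity of $F$, I would fix $w_1,w_2$ with respective minimizers $u_1^* = u_{w_1}^*$ and $u_2^* = u_{w_2}^*$, and apply the joint $\mu$-strong convexity of $\bar f$ (Assumption~\ref{awaystep-ass}(4)) at the two points $(u_1^*,w_1)$ and $(u_2^*,w_2)$. The displacement $\|(u_1^*,w_1) - (u_2^*,w_2)\|_2^2 \ge \|w_1 - w_2\|_2^2$ dominates the $w$-part, while on the left side the terms $\bar f(u_i^*,w_i) = F(w_i)$ and the gradient inner products collapse to the $F$-gradient expression from Part~(1), using that the $u$-component of the gradient contributes nonnegatively against $u_2^* - u_1^*$ by optimality. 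This yields $F(w_2) - F(w_1) - \la\na F(w_1), w_2 - w_1\ra \ge (\mu/2)\|w_2-w_1\|_2^2$. Part~(3) is then the standard consequence of $\mu$-strong convexity: minimizing both sides of the strong convexity inequality (with roles arranged so the left side is $F^* - F(w)$) over the displacement gives $F(w) - F^* \le \|\na F(w)\|_2^2/(2\mu)$, exactly the quadratic-growth-to-gradient bound.

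\textbf{Main obstacle.} The technical crux is Part~(1), specifically the differentiability claim and the envelope formula $\na F(w) = \na_w\bar f(u_w^*,w)$: I must argue carefully that the inner minimizer $u_w^*$ varies continuously (or at least that its variation does not spoil the gradient), which rests on uniqueness of $u_w^*$ from strong convexity. The cleanest route avoids explicit continuity of $w\mapsto u_w^*$ and instead sandwiches the first-order remainder of $F$ directly using the optimality inequalities at $w$ and $w'$ together with smoothness of $\bar f$; this is where the $L$-smoothness hypothesis (Assumption~\ref{awaystep-ass}(3)) enters to control cross terms. Parts~(2) and~(3) are then routine once Part~(1) furnishes the gradient formula.
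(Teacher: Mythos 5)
Your proposal is correct and follows essentially the same route as the paper: convexity of $F$ by partial minimization, the gradient formula via Danskin's theorem using uniqueness of $u_w^*$, strong convexity of $F$ by applying the joint $\mu$-strong convexity of $\bar f$ at the two minimizer points and using the first-order optimality conditions to discard the $\na_u$ cross terms, and Part (3) as the standard gradient bound for strongly convex functions. The only cosmetic difference is in Part (2), where the paper works with the gradient-monotonicity characterization $\la \na F(w^2)-\na F(w^1), w^2-w^1\ra \ge \mu\|w^2-w^1\|_2^2$ (needing optimality at both minimizers) while you use the function-value form of strong convexity directly (needing optimality at only one); both are valid.
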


\begin{proof}
	\underline{Part (1)} The convexity of $F$ can be found in a standard reference on convex analysis, see e.g. \cite{rockafellar1970convex}. The differentiability property of $F$ is known as Danskin's theorem (for example, see~\cite{bonnans1998optimization} for a proof). 
	
	\smallskip
	
	\noindent \underline{Part (2)} 
	Note that a function $g: \R^m \rightarrow \R$ is $\mu$-strongly convex with respect to $\|\cdot\|_2$ if and only if 
	\begin{equation}\label{strong-convex-equivalence}
		\la \na g(x) - \na g(y) , x-y \ra \ge \mu \|x-y\|_2^2 
	\end{equation}
	for all $x,y\in \R^m$ (See e.g. Theorem 2.1.9 of 
	\cite{Nesterov-intro-book}). 
	For any $w^1,w^2\in \R^r$ we have
	\begin{eqnarray}
		&& \la  \na F(w^2) -  \na F(w^1) , w^2 - w^1 \ra \nonumber\\
		&\mathop{=}\limits^{(i)}& 
		\lt\la \na_w \bar f(u^*_{w^2},w^2) - \na_w \bar f(u^*_{w^1},w^1), w^2 - w^1 \rt\ra \nonumber\\
		&=&
		\lt\la   \na \bar f(u^*_{w^2},w^2) - \na \bar f(u^*_{w^1},w^1),  (u^*_{w^2},w^2) - (u^*_{w^1},w^1)  \rt\ra \nonumber\\
		&&
		-
		\lt\la   \na_u \bar f (u^*_{w^2},w^2) -  \na_u \bar f (u^*_{w^1},w^1) ,  u^*_{w^2} - u^*_{w^1} \rt\ra \nonumber \\
		&\mathop{\ge}\limits^{(ii)}&
		\mu \lt\|  (u^*_{w^2},w^2) - (u^*_{w^1},w^1)  \rt\|_2^2
		-
		\lt\la   \na_u \bar f (u^*_{w^2},w^2) -  \na_u \bar f (u^*_{w^1},w^1) ,  u^*_{w^2} - u^*_{w^1} \rt\ra \nonumber\\
		&\ge&
		\mu\| w^2 - w^1 \|_2^2 -
		\lt\la   \na_u \bar f (u^*_{w^2},w^2) -  \na_u \bar f (u^*_{w^1},w^1) ,  u^*_{w^2} - u^*_{w^1} \rt\ra \nonumber \ .
	\end{eqnarray}
	where inequality $(i)$ follows from the conclusion in part (1), and inequality $(ii)$ follows from the strong-convexity of $\bar f$ and the equivalent condition \eqref{strong-convex-equivalence}.
	Meanwhile, it follows from $u^*_{w^1} \in \argmin_{u\in S} \{\bar f(u,w^1)\}$ and $ u^*_{w^2} \in \argmin_{u\in S} \{\bar f(u,w^2)\} $ that
	$$
	\lt\la    \na_u \bar f(u^*_{w^1}, w^1 ), u^*_{w^2} - u^*_{w^1}  \rt\ra \ge 0, \quad
	\lt\la    \na_u \bar f(u^*_{w^2}, w^2 ), u^*_{w^1} - u^*_{w^2}  \rt\ra \ge 0 \ .
	$$
	Therefore, we have 
	$
	\la  \na F(w^2) -  \na F(w^1) , w^2 - w^1 \ra \ge
	\mu \| w^2 - w^1 \|_2^2 \ ,
	$
	which shows $F$ is $\mu$-strongly convex with respect to $\|\cdot\|_2$. 
	
	\smallskip
	
	\noindent	\underline{Part (3)} Since $F$ is $\mu$-strongly convex over $\R^r$, this final claim follows from Theorem 2.1.10 of \cite{Nesterov-intro-book}. \end{proof}

\begin{lemma}\label{lemma:smooth-inequality}
	[Theorem 2.1.5 of 
	\cite{Nesterov-intro-book}]
	For any $u^1, u^2 \in S$ and $w^1, w^2\in \R^r$, we have
	\begin{equation}
		\begin{aligned}
			\bar f(u^2, w^2) \ge & ~ \bar f(u^1, w^1) + \lt\la \na \bar f(u^1,w^1), (u^2-u^1, w^2-w^1) \rt\ra \\
			&  + \frac{1}{2L} \| \na \bar f(u^2, w^2) - \na \bar f(u^1, w^1) \|_2^2 \ .
		\end{aligned}
		\nonumber
	\end{equation}
\end{lemma}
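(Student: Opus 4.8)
The stated inequality is precisely the co-coercivity (lower interpolation) bound for a convex function whose gradient is $L$-Lipschitz, which is the content of the cited Theorem 2.1.5 of \cite{Nesterov-intro-book}. The plan is therefore to reduce the statement to that theorem applied to $\bar f$, while being careful that $\bar f$ is only assumed $L$-smooth on $\bar S \oplus T$ (Assumption \ref{awaystep-ass}(3)) rather than on all of $\R^n$. Writing $z^1 := (u^1,w^1)$ and $z^2 := (u^2, w^2)$, both points lie in $S \times \R^r \subseteq \bar S \times \R^r$, the axis-aligned image of $\bar S \oplus T$, and $\bar f$ is convex there because $f$ is convex and $\bar f$ is obtained from $f$ by the orthogonal change of variables of Step~1. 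So the two ingredients needed---convexity and the $L$-smooth quadratic upper bound---are both available on the segment joining $z^1$ and $z^2$.

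First I would introduce the auxiliary function $\phi(z) := \bar f(z) - \la \na \bar f(z^1), z \ra$, which is convex, shares the $L$-smoothness of $\bar f$, and satisfies $\na \phi(z^1) = 0$; hence $z^1$ is a global minimizer of $\phi$ over $\bar S \times \R^r$ (stationarity of the full gradient plus convexity, with no interiority needed even when $u^1$ is a vertex). Next I would apply the descent lemma---the quadratic upper bound implied by $L$-smoothness---to $\phi$ with base point $z^2$ and target point $z^\star := z^2 - \frac{1}{L}\na \phi(z^2)$, giving $\phi(z^\star) \le \phi(z^2) - \frac{1}{2L}\|\na\phi(z^2)\|_2^2$. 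Combining this with $\phi(z^1) \le \phi(z^\star)$ yields $\phi(z^1) \le \phi(z^2) - \frac{1}{2L}\|\na \bar f(z^2) - \na\bar f(z^1)\|_2^2$. Unfolding the definition of $\phi$, using $\na\phi(z^2) = \na\bar f(z^2) - \na\bar f(z^1)$, and noting that the linear terms collapse into $\la \na\bar f(z^1), z^2 - z^1\ra$, then reproduces exactly the claimed inequality with $z^2 - z^1 = (u^2 - u^1, w^2 - w^1)$.

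The one genuinely nonroutine point---and the step I expect to be the main obstacle---is justifying that the gradient-step point $z^\star$, which is where the quadratic upper bound is evaluated, lies in the smoothness domain $\bar S \times \R^r$. The $w$-block of $z^\star$ causes no trouble since the $w$-coordinates are unconstrained, but the $u$-block $u^2 - \frac{1}{L}(\na_u\bar f(z^2) - \na_u\bar f(z^1))$ must be checked to remain in $\bar S$; one might hope this is exactly the role played by enlarging $S$ to $\bar S = S + (S-S)$, though because the $w$-displacement can be large the containment is not automatic from $u^1,u^2 \in S$ alone. If it cannot be read off directly, the clean fallback is to invoke an extension of $\bar f$ to a convex, globally $L$-smooth function on $\R^n$ (such extensions exist for convex functions with Lipschitz gradient on a convex set) and then apply Theorem 2.1.5 of \cite{Nesterov-intro-book} verbatim; since $z^1, z^2 \in \bar S \times \R^r$, the resulting inequality is unaffected by the extension. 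I would structure the write-up so that smoothness is invoked only along the segment between $z^2$ and $z^\star$, thereby isolating this containment check as the sole technical obligation.
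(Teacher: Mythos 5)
Your main argument is exactly the canonical proof of the cited result, and the paper in fact offers no proof at all for Lemma \ref{lemma:smooth-inequality} --- it is invoked verbatim as Theorem 2.1.5 of \cite{Nesterov-intro-book}, which is stated there for functions that are convex and $L$-smooth on all of $\R^n$. So your auxiliary-function argument ($\phi(z) = \bar f(z) - \la \na \bar f(z^1), z\ra$, global minimality at $z^1$, descent lemma at the gradient step from $z^2$) is correct whenever global $L$-smoothness is available, and your instinct that the restricted smoothness in Assumption \ref{awaystep-ass}(3) creates a genuine obligation is sound: the gradient-step point $z^\star$ need not lie in $\bar S \times \R^r$, and passing from $S$ to $\bar S = S + (S-S)$ does not help, since the step direction $\na \bar f(z^2) - \na \bar f(z^1)$ has no reason to lie in the span of $S - S$ (take $S$ lower-dimensional, or even a single point, to see the failure). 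However, your proposed fallback does not close this gap: by the Azagra--Mudarra extension theorem, a convex function with $L$-Lipschitz gradient on a convex set admits a convex, globally $L$-smooth extension \emph{if and only if} the co-coercivity inequality you are trying to prove already holds on that set, so invoking the extension is circular; extensions with a degraded constant exist but would only yield the inequality with a worse factor than $1/(2L)$.

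There is a clean repair that works under the literal Assumption \ref{awaystep-ass}(3) and suffices for the paper. Note that $\phi(z^1) \le \phi(z)$ holds for \emph{all} $z \in \R^n$, since $f$ (hence $\bar f$, hence $\phi$) is convex and differentiable on all of $\R^n$ and $\na\phi(z^1)=0$; only the descent-lemma step needs smoothness. So take the gradient step in the $w$-block only: set $z^\star := z^2 - \tfrac{1}{L}\bigl(0, \na_w \phi(z^2)\bigr)$. Then the segment $[z^2, z^\star]$ stays inside $S \times \R^r \subseteq \bar S \times \R^r$, the quadratic upper bound applies, and unfolding gives
$$
\bar f(u^2,w^2) \ge \bar f(u^1,w^1) + \la \na \bar f(u^1,w^1), (u^2-u^1, w^2-w^1)\ra + \frac{1}{2L}\| \na_w \bar f(u^2,w^2) - \na_w \bar f(u^1,w^1)\|_2^2,
$$
i.e., the lemma with the full gradient difference replaced by its $w$-block. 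Inspecting the only place the lemma is used --- inequality $(iii)$ in Case $ii$ of the proof of Proposition \ref{prop:combining} --- shows that exactly this $w$-block version is what is needed there, so this weaker statement is provable from the stated assumptions and leaves all downstream results intact. In short: your core proof matches the (cited) standard argument, your containment worry is a real defect shared by the paper's literal citation, your extension fallback fails for the reason above, and restricting the gradient step to the unconstrained block is the correct way out.
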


The next proposition combines the contraction of objective values in the gradient descent and Frank-Wolfe steps:
	\begin{proposition}\label{prop:combining}
		Suppose in iteration $k$ ($k\ge 1$)
		the following condition holds:
		\begin{equation}\label{contraction-condition}
			\bar f(u^{k+1}, w^{k+1}) - f^* \le (1-\rho) \lt( \bar f(u^k, w^{k+1}) - f^* \rt) + \rho (F(w^{k+1}) - f^*). 
		\end{equation}
		for some $\rho\in (0,1)$. 
		Then 
		\begin{eqnarray}\label{contraction-result}
			\bar f(u^{k+1},w^{k+2})  - f^* \le (1-\frac{\mu\eta}{4} \rho) \lt( \bar f(u^{k},w^{k+1})  - f^* \rt) \ .
		\end{eqnarray}
	\end{proposition}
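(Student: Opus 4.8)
The plan is to read the transition from iteration $k$ to $k+1$ as a Frank–Wolfe step along $S$ (which by hypothesis \eqref{contraction-condition} pulls $\bar f(u^{k+1},w^{k+1})$ toward the fiber value $F(w^{k+1})$), followed by one smooth gradient-descent step $w^{k+2}=w^{k+1}-\eta\,\na_w\bar f(u^{k+1},w^{k+1})$ along $T$. Write $g:=\na_w\bar f(u^{k+1},w^{k+1})$ and $\bar g:=\na F(w^{k+1})=\na_w\bar f(u^*_{w^{k+1}},w^{k+1})$ (the last equality by Lemma \ref{lemma:diff-under-inf}(1)), and abbreviate $A:=\bar f(u^k,w^{k+1})-f^*$, $B:=\bar f(u^{k+1},w^{k+1})-f^*$, $C:=F(w^{k+1})-f^*$, so that $0\le C\le B\le A$ and \eqref{contraction-condition} reads $B\le(1-\rho)A+\rho C$. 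Since $\bar f$ is $L$-smooth over $\bar S\oplus T$ and $\eta\le 1/L$, the descent lemma applied to $w\mapsto\bar f(u^{k+1},w)$ gives both the monotonicity $\bar f(u^{k+1},w^{k+2})\le\bar f(u^{k+1},w^{k+1})$ and the sharper estimate $\bar f(u^{k+1},w^{k+2})-f^*\le B-\tfrac{\eta}{2}\|g\|_2^2$. The goal \eqref{contraction-result} then becomes a scalar inequality among $A,B,C$ and $\|g\|_2^2$, which I would close with a two-case argument driven by the size of $C$ relative to $A$.

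I would first dispose of the case $C\le A\,(1-\tfrac{\mu\eta}{4})$, where no gradient information is needed: monotonicity of the gradient step gives $\bar f(u^{k+1},w^{k+2})-f^*\le B$, and plugging $B\le(1-\rho)A+\rho C$ together with the case assumption yields exactly $(1-\tfrac{\mu\eta}{4}\rho)A$. The substance is the complementary case $C>A\,(1-\tfrac{\mu\eta}{4})$: here $A-C<\tfrac{\mu\eta}{4}A$, so $C$ is within a small fraction of $A$, and \eqref{contraction-condition} forces the fiber gap $B-C\le(1-\rho)(A-C)$ to be correspondingly small. In this regime I would actually use the descent term $-\tfrac{\eta}{2}\|g\|_2^2$, for which I need a lower bound on $\|g\|_2^2$ proportional to $C$.

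Producing that lower bound is the main obstacle, because $g$ is the partial gradient of $\bar f$ at $u^{k+1}$, not the true gradient $\bar g$ of $F$; the two differ, and a naive bound $\|g-\bar g\|_2\le L\|u^{k+1}-u^*_{w^{k+1}}\|_2$ followed by strong convexity in $u$ loses a factor of the condition number and is too weak to close with the constant $\tfrac14$. The right tool is the co-coercivity estimate of Lemma \ref{lemma:smooth-inequality}: applying it to the pair $(u^*_{w^{k+1}},w^{k+1})$ and $(u^{k+1},w^{k+1})$ and discarding the first-order term (nonnegative by optimality of $u^*_{w^{k+1}}$ over $S$) gives $\|g-\bar g\|_2^2\le 2L(B-C)$ directly, with no condition-number penalty. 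Combining this with $\|\bar g\|_2^2\ge 2\mu\,C$ from Lemma \ref{lemma:diff-under-inf}(3) and the elementary inequality $\|g\|_2^2\ge\tfrac12\|\bar g\|_2^2-\|g-\bar g\|_2^2$ yields $\|g\|_2^2\ge \mu C-2L(B-C)$, and since in this case $B-C$ is small (bounded by $\tfrac{\mu\eta}{4}A\le\tfrac{\mu}{4L}A$) the error $2L(B-C)$ is dominated by the signal $\mu C$. Substituting this lower bound into $\bar f(u^{k+1},w^{k+2})-f^*\le B-\tfrac{\eta}{2}\|g\|_2^2$ and using $B\le(1-\rho)A+\rho C$, $\eta L\le 1$, $\rho\le\tfrac12$, and $C>\tfrac34A$ reduces everything to tracking constants, which come out to give precisely the factor $1-\tfrac{\mu\eta}{4}\rho$.
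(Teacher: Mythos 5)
Your proof is correct and follows essentially the same route as the paper's: the same case split on whether $F(w^{k+1})-f^*$ exceeds $(1-\tfrac{\mu\eta}{4})(\bar f(u^k,w^{k+1})-f^*)$, the same use of Lemma \ref{lemma:smooth-inequality} to get $\|g-\bar g\|_2^2\le 2L(B-C)$, and the same strong-convexity bound on $\|\bar g\|_2$; the only cosmetic difference is that you combine these via $\|g\|_2^2\ge\tfrac12\|\bar g\|_2^2-\|g-\bar g\|_2^2$ where the paper uses the plain triangle inequality on norms, and both yield the same bound $\|g\|_2^2\ge\tfrac{\mu}{4}(\bar f(u^k,w^{k+1})-f^*)$. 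Note that your final step, like the paper's, implicitly requires $\rho\le\tfrac12$ (as holds in the application since $\rho=\min\{1/2,\gamma\}$), which you correctly flag.
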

\begin{proof}
	Let $\tau := {\mu\eta}/{4}$. Then $0< \tau \le1/4$. We split the analysis into two cases. 
	
	\smallskip
	
	\noindent
	{\bf (Case $i$)} Here we consider the case when 
	\begin{equation}\label{assumption-a}
		F(w^{k+1}) - f^* \le (1-\tau) \lt[ 	\bar f(u^k,w^{k+1})  - f^* \rt] \ .
	\end{equation}
	Then we have
	\begin{eqnarray*}
		\bar f(u^{k+1},w^{k+2})  - f^* &\le&
		\bar f(u^{k+1},w^{k+1})  - f^* \\
		&\le&
		(1-\rho) \lt( \bar f(u^k, w^{k+1}) - f^* \rt) + \rho (F(w^{k+1}) - f^*) \\
		&\le&
		(1-\rho) \lt[ \bar f(u^k, w^{k+1}) - f^* \rt] +\rho(1-\tau) \lt[ 	\bar f(u^k,w^{k+1})  - f^* \rt] \\
		&=&
		(1-\rho \tau) \lt[ \bar 	f(u^k,w^{k+1})  - f^* \rt]  \ ,
	\end{eqnarray*}
	where the first inequality is from the monotonity of the iterations, the second inequality is from the contraction condition \eqref{contraction-condition}, and the third  inequality is from assumption~\eqref{assumption-a}.
	
	\smallskip
	
	\noindent
	{\bf (Case $ii$)} Here we consider the case when 
	\begin{eqnarray}\label{ass-case2}
		F(w^{k+1}) - f^* > (1-\tau) \lt[ 	\bar f(u^k,w^{k+1})  - f^* \rt]	
	\end{eqnarray}
	which is equivalent to
	\begin{equation}\label{assumption-b}
		\bar f(u^k,w^{k+1}) - F(w^{k+1}) < \tau \lt[ 	\bar f(u^k,w^{k+1})  - f^* \rt] \ .
	\end{equation}
	Define $M_k :=  \bar f(u^k,w^{k+1})  - f^* $. Then, it holds that
	\begin{eqnarray}
		\tau M_k &\mathop{>}\limits^{(i)}&   \bar f(u^k,w^{k+1}) - F(w^{k+1})  \nonumber\\
		&\mathop{\ge}\limits^{(ii)}&
		\bar f(u^{k+1},w^{k+1}) - F(w^{k+1})  \nonumber\\
		&\mathop{\ge}\limits^{(iii)}&
		\lt\la \na_u \bar f(u^*_{w^{k+1}},w^{k+1}) , u^{k+1} - u^*_{w^{k+1}}\rt\ra \nonumber\\
		&&
		+(1/(2L)) \lt\|  \na_w \bar f(u^{k+1},w^{k+1}) - \na_w    \bar f(u^*_{w^{k+1}},w^{k+1})   \rt\|^2_2 \nonumber\\
		&\mathop{\ge}\limits^{(iv)}&
		({1}/{(2L)}) \lt\|  \na_w \bar f(u^{k+1},w^{k+1}) - \na_w    \bar f(u^*_{w^{k+1}},w^{k+1})   \rt\|^2_2 \nonumber\\
		&=&
		({1}/{(2L)}) \lt\|  \na_w \bar f(u^{k+1},w^{k+1}) - \na F(w^{k+1})   \rt\|^2_2 \ , \nonumber
	\end{eqnarray}
	where inequality $(i)$ is from \eqref{assumption-b}, $(ii)$ is from monotonicity of the iterations, $(iii)$~is from Lemma \ref{lemma:smooth-inequality}, and $(iv)$~is because of the optimality of $u^*_{w^{k+1}}$. The final equality is from Lemma \ref{lemma:diff-under-inf} (1). As a result, we have 
	$$
	\|  \na_w \bar f(u^{k+1},w^{k+1}) - \na F(w^{k+1})  \|_2 \le 
	\sqrt{2L\tau M_k} \ ,
	$$
	and hence
	\begin{eqnarray}
		\|\na_w \bar f(u^{k+1},w^{k+1})\|_2
		&\ge&
		\|\na F(w^{k+1})\|_2 -  \|\na F(w^{k+1})-\na_w \bar f(u^{k+1},w^{k+1}) \|_2 \nonumber\\
		&\ge &
		\|\na F(w^{k+1})\|_2   -	\sqrt{2L\tau M_k} \nonumber\\
		& \mathop{\ge}\limits^{(i)} &
		\sqrt{2\mu (F(w^{k+1}) - f^*)} -	\sqrt{2L\tau M_k}   \nonumber\\
		&\mathop{\ge}\limits^{(ii)}&
		\sqrt{2\mu (1-\tau ) M_k} - 	\sqrt{2L\tau M_k} \nonumber\\
		&=&
		\lt(   \sqrt{2\mu(1-\tau)} - \sqrt{2L\tau}  \rt) \sqrt{ M_k} \ , \nonumber
	\end{eqnarray}
	where $(i)$ uses Lemma~\ref{lemma:diff-under-inf} (3), and $(ii)$ is from inequality~\eqref{ass-case2}. 
	As $ \tau = {\mu\eta}/{4} $, we have 
	$$
	\sqrt{2\mu(1-\tau)} - \sqrt{2L\tau} \ge
	\sqrt{2\mu (3/4)} - \sqrt{2L\eta ({\mu}/{4})} \ge
	\lt( \sqrt{ 3/2} -  \sqrt{ 1/2}\rt) \sqrt{\mu} > 
	\sqrt{\mu}/2\ ,
	$$
	and thus 
	\begin{eqnarray}\label{ineq8}
		\|\na_w \bar f(u^{k+1},w^{k+1})\|_2 \ge (1/2) \sqrt{\mu(\bar f(u^k,w^{k+1}) - f^* )}  \ .
	\end{eqnarray}

	On the other hand, 
	by the gradient step in the unbounded subspace, we have
	$$
	\bar f(u^{k+1},w^{k+2}) \le
	\bar f(u^{k+1},w^{k+1}) - ({\eta}/{2}) \|\na_w \bar f(u^{k+1},w^{k+1})\|_2^2 \ .
	$$
	Therefore,
	$$
	\bar f(u^{k+1},w^{k+1}) - \bar f(u^{k+1},w^{k+2})
	\ge
	\frac{\eta}{2} \|\na_w \bar f(u^{k+1},w^{k+1})\|_2^2
	\ge
	\frac{\eta\mu}{8} (\bar f(u^k,w^{k+1}) - f^* ) \ ,
	$$
	where the last inequality is from \eqref{ineq8}.
	{\color{black}
		Noticing the algorithm is descent, it holds
		\begin{eqnarray}
			\bar f(u^{k},w^{k+1}) - \bar f(u^{k+1},w^{k+2}) &\ge& \bar f(u^{k+1},w^{k+1}) - \bar f(u^{k+1},w^{k+2}) \nonumber\\
			&\ge&
			({\mu \eta}/8) (\bar f(u^k,w^{k+1}) - f^* ). \nonumber
		\end{eqnarray}
		The above leads to
		$$
		\bar f(u^{k+1},w^{k+2}) - f^* \le
		(1-\frac{\mu\eta}{8}) (\bar f(u^k,w^{k+1}) - f^* ) \le 	\lt(   1- \rho \frac{\mu \eta}{4}   \rt) (\bar f(u^k,w^{k+1}) - f^* ) \ ,
		$$
		where the last inequality is because $\rho = \min\{1/2, \ga\} \le 1/2$. 
	}
\end{proof}

Now we are ready to prove Theorem \ref{CGG-linear-conv} by combining Propositions \ref{FW-contraction} and~\ref{prop:combining}:

\medskip

\noindent
\textbf{Proof of Theorem \ref{CGG-linear-conv}:} In view of Proposition \ref{FW-contraction}, among the first $k$ iterations, at least $k/2$ of them satisfy condition \eqref{contraction-condition}.
Proposition~\ref{prop:combining} states that when~\eqref{contraction-condition} holds we also have~\eqref{contraction-result}, hence \eqref{contraction-result} holds for at least $k/2$ of the first $k$ iterations.
Meanwhile, notice that uAFW is a descent algorithm, therefore we have
$$
\bar f(u^k,w^{k+1}) - f^* \le \lt(1- (\rho{\mu \eta}/{4}) \rt)^{k/2} (\bar f(u^0,w^1) - f^*) \ .
$$
We complete the proof of Theorem \ref{CGG-linear-conv} by substituting $\rho=\min \{1/2, \sigma/\bar C_{f,x^0}^S\}$ in Proposition \ref{FW-contraction}. \qed

\section{Applications}\label{section: applications}

We discuss how to efficiently apply uFW (Algorithm \ref{CG-G method}) and uAFW (Algorithm \ref{CG-G with away steps}) to solve the two motivating applications mentioned in Section~\ref{sec:intro}.
Section~\ref{subsection: A Class of Constrained Supervised Learning Problem} discusses other applications of our framework.

\subsection{The $\ell_1$ trend filtering problem (of order $r$)}\label{trend-filtering-subsection}

The $\ell_1$ trend filtering problem~\eqref{intro:generalized-lasso} of order $r$ is an instance of the following optimization problem:
\begin{equation}\label{trend-filtering}
	\min_{x\in \R^n} ~ f(x) ~~~\stt ~~~ \ \|D_n^{(r)} x\|_1 \le \delta \ ,
\end{equation}
where $f$ is a smooth convex function; $D^{(r)}_n$ denotes the $r$-order discrete derivative operator, 
that is, $D^{(1)}_n$ is a matrix in $\R^{(n-1)\times n}$ with $(i,i)$-entry being $1$ and $(i, i+1)$-entry being $-1$ (for $i\in [n-1]$) and all other entries being $0$. For $r\ge 2$, it is defined recursively by
$
D^{(r+1)}_n = D^{(1)}_{n-r} \cdot D^{(r)}_n$.

Popular instances of $\ell_1$ trend filtering consider small values of $r$. $\ell_1$ trend filtering with $r=1$ is also called the fused lasso~\cite{fused-lasso}, where the constraint $\|D^{(1)}_n x\|_1 \le \delta$ tends to induce piecewise constant solutions in $x$. $\ell_1$ trend filtering with $r=2$ and $r=3$ favor piecewise linear solutions and piecewise quadratic solutions, respectively.

Trend filtering problems have been extensively studied in the literature. There have been many approaches for solving it, for example, interior point method \cite{Boyd-trend-filter}, ADMM type methods \cite{ramdas2016fast}, and pathwise algorithms \cite{gen-lasso}. There is also an efficient dynamic programming based method for the case when $r=1$~\cite{johnson2013dynamic}. Most of the above approaches work only for a specific form of the objective function $f$. In the following, we show how to utilize uFW and uAFW to solve this problem with a general convex smooth objective $f$.  

\smallskip

\noindent \textbf{Writing \eqref{trend-filtering} in the form of \eqref{problem1}}:
We can write~\eqref{trend-filtering} in the form of~\eqref{problem1} with
$$
T = \ker(D^{(r)}_n), \quad S = 
\Big\{  x\in \R^n ~\big|~ \|D^{(r)}_n x\|_1 \le \delta , \ x \in \big(\ker(D^{(r)}_n) \big)^\perp \Big\}\ .
$$

\subsubsection{Computing projections onto $T$ and $T^\perp$}
The following lemma gives
a characterization of $T = \ker(D^{(r)}_n)$.
\begin{lemma}\label{natrual-basis}
	Let $U \in \R^{n\times n}$ be an upper triangular matrix with all its upper triangular entries being $1$. Then 
	$$T=\ker(D^{(r)}_n) = \spn \{\bsone_n, U \bsone_n, U^2 \bsone_n, \dots, U^{r-1}\bsone_n\} \ .$$
\end{lemma}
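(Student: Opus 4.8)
The plan is to identify $\ker(D^{(r)}_n)$ with the space of samplings of univariate polynomials of degree at most $r-1$, and then to exhibit $\{\bsone_n, U\bsone_n, \ldots, U^{r-1}\bsone_n\}$ as a basis of that space. Let $\Phi$ denote the sampling map sending a polynomial $p$ to $(p(1),\ldots,p(n))^\top \in \R^n$, and write $\mathcal{P}_{r-1}$ for the space of polynomials of degree at most $r-1$ (so $\dim \mathcal{P}_{r-1} = r$). The three ingredients I would assemble are: (i) $\Phi(\mathcal{P}_{r-1}) \subseteq \ker(D^{(r)}_n)$; (ii) each $U^k\bsone_n$ lies in $\Phi(\mathcal{P}_{r-1})$ and corresponds to a polynomial of degree exactly $k$; and (iii) $\dim\ker(D^{(r)}_n) = r$. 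Together these force the $r$ listed vectors to be a basis of the kernel.

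For (i) I would use that a single first-difference operator lowers polynomial degree by one: if $v_i = p(i)$ then $(D^{(1)}_m v)_i = p(i)-p(i+1)$, which is the sampling of $x \mapsto p(x)-p(x+1)$, a polynomial of degree $\deg(p)-1$. Writing $D^{(r)}_n$ as a composition of $r$ first-difference operators of shrinking sizes (via the recursion $D^{(r+1)}_n = D^{(1)}_{n-r}D^{(r)}_n$) then shows $D^{(r)}_n$ annihilates any sampling of a polynomial of degree $\le r-1$. For (ii) a direct computation with the hockey-stick identity gives $(U^k\bsone_n)_i = \binom{n-i+k}{k}$, a degree-$k$ polynomial in $i$; since these polynomials have pairwise distinct degrees they are linearly independent, and because $\Phi$ is injective on $\mathcal{P}_{r-1}$ whenever $n\ge r$ (a nonzero polynomial of degree $\le r-1 < n$ cannot vanish at $n$ points), the corresponding vectors are linearly independent as well and span the $r$-dimensional space $\Phi(\mathcal{P}_{r-1})$.

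The step needing the most care — and the main obstacle — is the dimension count (iii), i.e. showing $D^{(r)}_n$ has full row rank $n-r$. Since $D^{(r)}_n$ has only $n-r$ rows, (i)--(ii) already give $\dim\ker \ge r$, so everything reduces to proving the rows are independent. I would do this either from the explicit form of the rows — by Pascal's rule the recursion yields that row $i$ of $D^{(r)}_n$ equals $\sum_{j=0}^r (-1)^j\binom{r}{j} e_{i+j}^\top$, whose leftmost nonzero entry sits in column $i$ with value $1$, so the leading entries occupy distinct columns — or, more slickly, from the identity $D^{(r)}_n U = [\,D^{(r-1)}_{n-1} \mid 0\,]$, proved by an easy induction on $r$ with base case $D^{(1)}_n U = [\,I_{n-1}\mid 0\,]$. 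Since $\det U = 1$, this gives $\mathrm{rank}(D^{(r)}_n) = \mathrm{rank}(D^{(r-1)}_{n-1}) = \cdots = \mathrm{rank}(D^{(1)}_{n-r+1}) = n-r$.

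With $\dim\ker(D^{(r)}_n) = r$ established, the $r$-dimensional subspace $\Phi(\mathcal{P}_{r-1})$ contained in the kernel must equal the kernel, and since $\{U^k\bsone_n\}_{k=0}^{r-1}$ is a basis of $\Phi(\mathcal{P}_{r-1})$ the lemma follows. I expect the polynomial-degree bookkeeping in (i)--(ii) to be routine, while the cleanest route to the rank statement in (iii) is the only genuinely delicate choice.
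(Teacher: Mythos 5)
Your proof is correct, and it reaches the same three milestones as the paper's proof (the $r$ listed vectors lie in $\ker(D^{(r)}_n)$, they are linearly independent, and $\dim\ker(D^{(r)}_n)=r$), but by a genuinely different mechanism for the first two. The paper's entire argument is driven by the single matrix identity $D_{n}^{(i)} U^{i} = [I_{n-i},\boldsymbol{0}_{(n-i) \times i}]$ (verified by induction): containment follows by writing $D_n^{(r)}=D_{n-i}^{(r-i)}D_n^{(i)}$ and noting that each difference operator annihilates the all-ones vector; independence follows by hitting a vanishing linear combination $\sum_i \al_i U^i\bsone_n=0$ with $D_n^{(r-1)}$, then $D_n^{(r-2)}$, and so on, peeling off the coefficients one at a time; and the same identity at $i=r$ yields full row rank, hence $\dim\ker=r$. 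You instead interpret the kernel via the sampling map on polynomials of degree at most $r-1$ and identify $U^k\bsone_n$ with a sampled polynomial of degree exactly $k$, which makes containment and independence conceptually transparent (discrete differencing lowers polynomial degree; distinct degrees plus injectivity of sampling at $n>r$ points give independence), at the cost of the hockey-stick computation. Your second route to the rank statement, $D_n^{(r)}U=[\,D_{n-1}^{(r-1)} \mid 0\,]$ with $U$ unimodular, is essentially the un-iterated form of the paper's key identity, so the two proofs converge there. One small remark in your favor: the paper asserts $\dim(\ker(D^{(r)}_n))=r$ without explicit justification (it is implicit in their identity at $i=r$), so your care in isolating and proving the rank statement makes your write-up, if anything, more complete than the source.
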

\begin{proof}
	It is straightforward to verify the following identity (e.g., by induction)
	\begin{equation}\label{HL}
		D_{n}^{(i)} U^{i} = [I_{n-i},\boldsymbol{0}_{(n-i) \times i}], \ \ \ \forall 1\le i \le r.
	\end{equation}
	Hence for all $1\le i \le r-1$, it holds $D_n^{(r)} U^i \bsone_n   = 0$, which implies 
	$$ \spn \{\bsone_n, U \bsone_n, U^2 \bsone_n, \dots, U^{r-1}\bsone_n\} \subseteq  \ker(D^{(r)}_n) \ .$$
	On the other hand, assume that $\sum_{i=0}^{r-1} \al_i U^i \bsone_n=0$ for some $\al_0, \dots, \al_{r-1}\in\R$. Multiplying $D_n^{(r-1)}$ in both sides and in view of $D_n^{(i+1)} U^i \bsone_n = 0$, we have $\al_{r-1} = 0$. Similarly, we can prove $\al_{r-2} = \cdots = \al_{1} = 0$; and therefore $\{\bsone_n, U \bsone_n, U^2 \bsone_n, \dots, U^{r-1}\bsone_n \}$ are linearly independent. Note also that $\dim(\ker(D^{(r)}_n)) = r$, thereby completing the proof of the lemma.
\end{proof}

Let  $A:= [\bsone_n, U \bsone_n, \cdots, U^{r-1} \bsone_n] \in \R^{n\times r}$. To compute the projection operator $\cP_T$, we perform a QR decomposition of $A=QR$ where $Q$ is a matrix with orthogonal columns and the same size of $A$, and $R$ is an upper triangular square matrix. Note that the columns of $Q$ form an orthogonal basis of subspace $T$, and the projections onto $T$ and $T^{\perp}$ are given by 
$\cP_{T}x = QQ^\T x$ and $\cP_{T}^{\perp} x = x - \cP_{T}x.$

\subsubsection{Solving the linear programming subproblem}

The linear oracle in uFW (Algorithm \ref{CG-G method}) 
has the following form
\begin{equation}\label{sub1}
	\min_{x}~ \langle c,x \rangle ~~~ \stt ~~ \|D^{(r)}_n x\|_1 \le \delta , \ x \in (\text{ker}(D^{(r)}_n) )^{\perp} \ ,
\end{equation}
where $c=\nabla f(y^k)$ in the $k$-th iteration. 
As we discuss below, \eqref{sub1} admits a closed-form solution. To this end, 
note that~\eqref{sub1} can be written as
\begin{equation}\label{sub2}
	\min_{x} ~\langle c,x \rangle ~~~ \stt ~~ \|D^{(r)}_nx\|_1 \le \delta , \ Q^\T x = 0 \ ,
\end{equation}
where $Q\in \R^{n\times r}$ is computed by the QR decomposition as stated above.

We apply a variable transformation via $x = U^r y$. 
Now define $\bar c := (U^r)^\T c = [\bar c_1; \bar c_2]
$, where 
$\bar c_1 \in \R^{n-r}$ is the first $n-r$ components of $\bar c$ and $\bar c_2 \in \R^r$ is the last $r$ components of $\bar c$. 
Similarly, define $y = [z; w ]
$ where $z  \in \R^{n-r}$ is the first $n-r$ components of $y$ and $w \in \R^r$ is the last $r$ components of $y$. We let $B_1\in \R^{r \times (n-r)}$ and $B_2 \in \R^{r\times r}$ be such that $Q^\T U^r = [B_1,B_2]$. Then problem \eqref{sub2} can be converted to
\begin{equation*}
	\min_{z,w} \ \ \la \bar c_1 , z \ra + \la \bar c_2 , w \ra \quad
	\stt ~~ \|z\|_1 \le \delta , \ B_1 z + B_2 w = 0\ .
\end{equation*}
From the equality constraints, we have $w = -B_2^{-1} B_1 z$ 
and the problem reduces to
\begin{equation}\label{sub5}
	\min_{z} \ \la \bar c_1 - B_1^\T (B_2^{-1})^{\T} \bar c_2 , z \ra \quad \stt ~~
	\|z\|_1 \le \delta \ .
\end{equation}
Indeed, \eqref{sub5} has a closed-form solution.
Denote $\wtd c :=  \bar c_1 - B_1^\T (B_2^{-1})^{\T} \bar c_2$, and let $j=\argmax_i |\wtd c_i|$ be its largest component in absolute value, then the solution of \eqref{sub5} is 
$
z = -\text{sign}(\wtd c_j) \delta e_j
$, that is, if $\wtd c_j\ge 0 $, $z = -\delta e_j$; otherwise $z = \delta e_j$. 
Reversing the above process, we obtain the solution of problem \eqref{sub1}.

For uAFW, we need another linear oracle to compute  $v^k$ in step (2) of Algorithm \ref{CG-G with away steps}. This can be computed in a manner similar to that outlined above. Note that the vertex set $V(x^k)$ is a subset of vertices of $S$. 
As above, this problem can also be transformed to a similar form as \eqref{sub5} with a fewer number of variables.

The cost of computing the linear oracle above is $O(nr)$. In addition, there is a one-time cost of $O(nr^2)$ for computing the QR decomposition of $A$ and $B_1^\T (B_2^{-1})^{\T}$.

\subsection{Matrix Completion with Generalized Nuclear Norm Constraints}\label{subsection:generalized-nuclear-norm}
The nuclear norm is often used as a convex surrogate for the rank of a matrix and arises in the matrix completion problem \cite{candes2009exact,Mazumder2010spectral,Freund-inface}. 
Here we consider a family of matrix problems with generalized nuclear norm constraints, and the goal is to solve 
problem \eqref{intro:generalized-nuclear-norm}
where, $P\in \R^{k\times m}$, $Q\in \R^{n\times l}$ are given matrices
that encode prior information about the problem under consideration\footnote{We assume that there exists a finite optimal solution to \eqref{intro:generalized-nuclear-norm}.}.

In spite of its wide applicability~\cite{angst2011generalized,srebro2010collaborative,Fithian2018fexible}, solving~\eqref{intro:generalized-nuclear-norm} is computationally expensive  compared to the case when $P=I$ and $Q=I$, which is amenable to the usual Frank-Wolfe method~\cite{jaggi2013revisiting,Freund-inface}.
Below we show that our proposed framework suggests a useful approach for~\eqref{intro:generalized-nuclear-norm}---in particular, we need to compute the largest singular value/vector pair at every iteration, which can be performed efficiently via iterative methods having a low-computational cost per iteration~\cite{Golub1996matrix,Freund-inface}. 

\smallskip

\noindent \textbf{Writing \eqref{intro:generalized-nuclear-norm} in the form of \eqref{problem1}}:
Define the linear operator $\varphi_{P,Q} : X \mapsto PXQ$, then the feasible set of problem \eqref{intro:generalized-nuclear-norm} becomes $\mathcal{F} := \{X\in \R^{m\times n} ~|~ \|\varphi_{P,Q}(X)\|_* \le \delta\}$. Thus, we can convert \eqref{intro:generalized-nuclear-norm} to \eqref{problem1} with 
$$T= \ker(\varphi_{P,Q}) ~~ \text{and} ~~S=
\{X\in \R^{m\times n}~|~ X \in  \ker(\varphi_{P,Q})^{\perp}, \|PXQ\|_* \le \delta\}\ .$$

\subsubsection{Projections onto $T$ and $T^\perp$}

Recall that $\mc{P}_T$ and $\cP_{T}^\perp$ denote the projections onto spaces $T$ and $T^{\perp}$ respectively.
Note that the matrix representation of
$\varphi_{P,Q}$ is $Q^\T \otimes P$ (Kronecker product). 
Notice
$$T^\perp = \ker(Q^\T \otimes P)^\perp = \ran (Q\otimes P^\T)\ ,$$
thus the matrix representation of $\mathcal{P}_{T^\perp}$ can be written as
$$
(Q \otimes P^\T) (Q \otimes P^\T)^+ = 
(Q \otimes P^\T) (Q^+ \otimes P^{+\T}) =
(QQ^+) \otimes (P^\T P^{+\T}) = 
(QQ^+) \otimes (P^+P) \ ,
$$
where $P^{+} $ and $Q^+$ denote the Moore-Penrose inverses of $P$ and $Q$ (respectively). 
Therefore, we have $\mathcal{P}_{T}^\perp(X) = P^+P X QQ^+$ and $\mathcal{P}_{T}(X) =  X - P^+P X QQ^+ $.

\subsubsection{Solving the linear subproblem}

We show that the linear subproblem in the uFW framework for problem \eqref{intro:generalized-nuclear-norm} can be solved via computing the leading singular vector of a matrix.  Note that in step (2) of the uFW algorithm, we need to solve a problem of the following form
\begin{equation}\label{nu-sub}
	\min_{X} ~~\la C,X \ra \quad \stt~~   X \in  T^{\perp},\  \|PXQ\|_* \le \delta \ ,
\end{equation}
where $C$ is a given matrix in $\R^{m\times n}$.
Lemma~\ref{nu-subproblem-opt} shows how to solve~\eqref{nu-sub}. 

\smallskip

\begin{lemma}\label{nu-subproblem-opt}
	Let $\bar C := (P^+)^\T C (Q^+)^\T$. 
	Suppose the SVD of $\bar C$ is $\bar C = U D V^\T$, where $ U\in \R^{m\times m}$ and $V \in \R^{n\times n}$ are orthogonal matrices, 
	and $D$ is in $\R^{m\times n}$ with $D_{ii} = \sigma_i$ for $i=1,...,r$ ($r$ is the rank of $\bar C$) and all other entries being $0$, satisfying $\sigma_1 \ge \cdots \ge \sigma_r >0$.
	Let $u_1$ be the first column of $U$, $v_1$ be the first column of $V$. Then, the rank-one matrix $X^* := - \delta P^+ u_1 v_1^\T Q^+$ is an optimal solution to problem \eqref{nu-sub}.
\end{lemma}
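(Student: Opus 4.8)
The plan is to reduce \eqref{nu-sub} to a standard linear minimization over a nuclear-norm ball by pushing the variable through the map $\varphi_{P,Q}:X\mapsto PXQ$, and then to invoke the duality between the nuclear norm and the spectral norm. The algebraic tools are the defining identities of the Moore--Penrose inverse, $P^+PP^+=P^+$, $Q^+QQ^+=Q^+$, the symmetry of $PP^+,P^+P,QQ^+,Q^+Q$ (so that they are the orthogonal projectors onto $\ran(P),\ran(P^\T),\ran(Q),\ran(Q^\T)$ respectively), the range identity $\ran(M^+)=\ran(M^\T)$, and the projection formula $\cP_T^\perp X=P^+PXQQ^+$ derived earlier in this subsection.

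First I would record an objective identity: for every $X$ one has $\la\bar C,PXQ\ra=\la C,\cP_T^\perp X\ra$. This is a one-line trace computation, since both sides equal $\mathrm{tr}(QQ^+C^\T P^+PX)$ after using $\bar C^\T=Q^+C^\T P^+$, the projection formula, and the cyclic property of the trace. In particular $\la C,X\ra=\la\bar C,PXQ\ra$ whenever $X\in T^\perp$. Next I would establish the two range properties that drive the whole argument: $u_1\in\ran(P)$ and $v_1\in\ran(Q^\T)$. Indeed $u_1=\bar Cv_1/\sigma_1\in\ran(\bar C)$ and $v_1=\bar C^\T u_1/\sigma_1\in\ran(\bar C^\T)$ (using $\sigma_1>0$), while $\bar C=(P^+)^\T C(Q^+)^\T$ forces $\ran(\bar C)\subseteq\ran((P^+)^\T)=\ran(P)$ and, writing $\bar C^\T=Q^+C^\T P^+$, $\ran(\bar C^\T)\subseteq\ran(Q^+)=\ran(Q^\T)$; here I apply the identity $\ran(M^+)=\ran(M^\T)$. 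Consequently $PP^+u_1=u_1$ and $Q^+Qv_1=v_1$. I expect this to be the crux of the proof: it is exactly what makes the pseudoinverse projections act trivially on the proposed solution, and without it neither the feasibility computation nor the objective value at $X^*$ comes out clean.

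With these facts in hand the remainder is bookkeeping. Feasibility of $X^*=-\delta P^+u_1v_1^\T Q^+$ splits into two checks: $X^*\in T^\perp$ follows from $\cP_T^\perp X^*=P^+PX^*QQ^+=-\delta(P^+PP^+)u_1v_1^\T(Q^+QQ^+)=X^*$; and $PX^*Q=-\delta(PP^+u_1)(v_1^\T Q^+Q)=-\delta u_1v_1^\T$ by the range properties, so $\|PX^*Q\|_*=\delta$. Evaluating the objective via the identity above, $\la C,X^*\ra=\la\bar C,PX^*Q\ra=-\delta\la\bar C,u_1v_1^\T\ra=-\delta\sigma_1$. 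Finally, for any feasible $X$ (so $X\in T^\perp$ and $\|PXQ\|_*\le\delta$), the objective identity together with the nuclear/spectral duality $|\la\bar C,Y\ra|\le\sigma_1(\bar C)\,\|Y\|_*$ gives
$$
\la C,X\ra=\la\bar C,PXQ\ra\ge-\sigma_1(\bar C)\,\|PXQ\|_*\ge-\delta\sigma_1=\la C,X^*\ra .
$$
Hence $X^*$ is feasible and attains the minimum, which is the assertion of the lemma.
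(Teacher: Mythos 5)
Your proposal is correct and follows essentially the same route as the paper: reduce the objective to $\la \bar C, PXQ\ra$ for feasible $X$ and invoke nuclear/spectral duality for the lower bound, with a direct feasibility check for $X^*$. The only (minor) refinement is your observation that $u_1\in\ran(P)$ and $v_1\in\ran(Q^\T)$, which yields $PX^*Q=-\delta u_1v_1^\T$ exactly and hence $\|PX^*Q\|_*=\delta$, whereas the paper only bounds this norm by $\delta$ using the nonexpansiveness of the projectors $PP^+$ and $Q^+Q$ and evaluates $\la C,X^*\ra$ by a direct computation instead.
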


\begin{proof}
	First, we show that $X^*$ satisfies the constraints. Note that
	$$
	T^\perp = \ker(Q^\T \otimes P )^\perp = \ran[(Q^\T \otimes P)^+] = \ran [(Q^+)^\T \otimes P^+] \ ,
	$$
	which means $Y \in T^\perp$ if and only if $\exists Z\in \R^{m\times n}$ such that $Y = P^+ Z Q^+$. Therefore, we have $X^* \in T^\perp$. In addition, we have
	\begin{eqnarray}
		\|PX^* Q\|_* = \dt \| PP^+ u_1 v_1^\T Q^+Q \|_* =\dt
		| v_1^\T Q^+Q PP^+ u_1 |\ , \nonumber
	\end{eqnarray}
	and hence
	$$
	\|PX^* Q\|_* \le \dt
	\|Q^+ Q v_1\|_2 \|PP^+ u_1\|_2 \le \dt
	\|v_1\|_2 \|u_1\|_2 = \dt \ ,
	$$
	where the second inequality is because $PP^+$ and $Q^+Q$ are projection matrices.
	
	Now we verify that $X^*$ is an optimal solution. First, note that 
	$$
	\la C , X^* \ra = -\delta \la C, P^+ u_1 v_1^\T Q^+ \ra =
	-\dt u_1^\T (P^+)^\T C (Q^+)^\T v_1 
	= -\dt u_1^\T \bar C v_1 =
	-\dt \sigma_1 \ .
	$$
	Secondly, for any $X$ satisfying $X\in T^\perp$ and $\|PXQ\|_* \le \dt$, we have $X = \cP_{T}^\perp X = P^+PXQQ^+$, and 
	$$
	\la X,C \ra =
	\la P^+PXQQ^+, C \ra =
	\la PXQ, (P^+)^\T C (Q^+)^\T \ra =
	\la PXQ ,\bar C \ra \ . 
	$$
	Therefore, 
	$$
	\la X,C \ra  \ge
	-\sigma_1 \|PXQ\|_* \ge
	-\sigma_1 \dt \ ,
	$$
	where the first inequality is because the spectral norm and the nuclear norm are dual of each other. This finishes the proof.
\end{proof}

\begin{remark}
	In several applications~\cite{Fithian2018fexible}, the matrices $P$ and $Q$ are projection 
	matrices (i.e., $P^+ = P$ and $Q = Q^+$).
	In such cases~\eqref{nu-sub} can be computed more efficiently.
	Using the same notations as the statement of Lemma \ref{nu-subproblem-opt},
	let $\bar C = P CQ$ and $u_1$ and $v_1$ be the leading (left and right) singular vectors of $\bar C$. Then an optimal solution of \eqref{nu-sub} is given by $X^* = -\dt u_1 v_1^\T$. 
\end{remark}

Note that $u_1$ and $v_1$ can be computed by iterative procedures \cite{Golub1996matrix} for large problems. When $P$, $Q$ and $C$ are sparse or low-rank, we can further explore these structures via the matrix-vector multiplication steps within the iterative procedure. This makes our algorithm computationally efficient.

\subsection{Other Applications}\label{subsection: A Class of Constrained Supervised Learning Problem}

Our presented algorithms (uFW and uAFW) can be generalized to solve the following family of learning problems:
\begin{equation}\label{gen-learn-model-cons}
	\min_{x} ~~~ \sum_{i=1}^N h \lt(b^i,  a^i, x \rt) ~~\stt ~~ \varphi(H x) \le \delta \ ,
\end{equation}
where $a^i\in\R^n$ is the feature vector and $b^i\in \R$ is the corresponding response of the $i$-th sample, $x\in \R^n$ is the model parameter to be optimized, $h$ is a given smooth convex loss function, $H$ is a given matrix 
and $\varphi$ is a nonnegative function. When $\varphi$ is a norm and $H$ does not have full row rank, the constraint set in \eqref{gen-learn-model-cons} is unbounded. The constraint set can be written in the form $T\oplus S$ with $T= \ker(H)$ and 
$S= \{ x\in \R^n ~|~ \varphi(H x) \le \delta, \ x \in (\ker(H))^\perp \}$, hence our algorithms uFW/uAFW can be applied. Concrete examples of this framework include total variation denoising~\cite{rudin1992nonlinear}, group fused lasso~\cite{alaiz2013group} and convex clustering~\cite{pelckmans2005convex}.

\section{Numerical Experiments}\label{section: experiments}
We present numerical experiments with uFW and uAFW for Problems~\eqref{intro:generalized-lasso} and~\eqref{intro:generalized-nuclear-norm}. Our code is available at

\href{https://github.com/wanghaoyue123/Frank-Wolfe-with-unbounded-constraints}{https://github.com/wanghaoyue123/Frank-Wolfe-with-unbounded-constraints}.

\subsection{$\ell_1$ trend filtering}

We first consider the $\ell_1$ trend filtering problem. 

\smallskip

\noindent
{\bf Data generation.} We generate a Gaussian ensemble $A= [a_1, a_2 , ..., a_n]^\T \in \R^{N\times n}$ with 
iid $N(0,1)$ entries, and
the noise is $\ep_i\sim N(0,\sigma^2)$ with variance $\sigma^2 >0$ for $i\in [N]$. 
The underlying model coefficient $x^*\in\R^n$ is generated as a piecewise constant vector when $r=1$ and a piecewise linear vector when $r=2$. 
More precisely, for $r = 1$, $x^*$ is piecewise constant with $5$ pieces, each of equal length.
The value of each piece is generated from the uniform distribution on $[-1/2,1/2]$ and then normalized such that $\| D^{(1)}_n x^*\|_1 = 1$. 
For $r=2$, $x^*$ is piecewise linear with $5$ pieces. These $5$ pieces have equal lengths, and the slope of each piece is generated from the uniform distribution on $[-1/2,1/2]$ and then normalized such that $\| D^{(2)}_n x^*\|_1 = 1$.
The response is then obtained from the following linear model: $b_i =  a^T_i x^* + \ep_i,~i\in [N]$.
To estimate $x^*$, we consider the following problem:
\begin{eqnarray}\label{eq:numerical-trend}
	\min_{x} ~\| b- A x \|_2^2  ~~~~ {\rm s.t.}  ~~ \| D^{(r)}_n x \|_1 \le \dt \ ,
\end{eqnarray}
where the bound $\dt$ is taken as $\|D^{(r)}_n x^*\|_1$.
	We define the \textit{Signal-noise-ratio (SNR)} of the problem as $ SNR := \| Ax^*\|^2/(n\sigma^2)$.

\smallskip

\noindent {\bf{Computational environment}.}	Our algorithms are written in Python 3.7.4, and we compare with MOSEK~\cite{andersen2000mosek} and ADMM-based solver SCS~\cite{scs,o2016conic} (SCS for short) by calling the solvers through CVXPY~\cite{diamond2016cvxpy}.
Computations were performed on MIT Sloan's engaging cluster with 4 CPUs and 8GB RAM per CPU.
The reported results are averaged over three independent trials.

\smallskip

\noindent 
{\bf{Performance of uFW and uAFW}.} 
For the sequence $\{x^k\}$ produced by our algorithms, let $f_k$ be the lowest objective value computed in the first $k$ iterations, and $G_k, H_k$ be the values defined in \eqref{def: Gk and Hk}. 
	We let the (relative) \textit{$G_k$-gap} be $G_k/{\max\{1,|f_k|\}}$ and (relative) \textit{$H^2_k$-gap} be $H_k^2/{\max\{1,|f_k|\}}$---they measure algorithm progress and can be used to design termination criteria for our algorithms.
	Additionally, we measure (relative) \textit{optimality gap} defined as ${(f(x^k) - f^*)}/{\max\{1,|f^*|\}}$, where $f^*$ is the optimal objective value computed by MOSEK. The relative measurement adjusts for the scale of the problem, and is commonly used in first-order solvers, such as SCS.

	Figure \ref{figure: comparison of different methods} presents the optimality gap, $G_k$-gap and $H_k^2$-gap 
	of uFW and uAFW versus the number of iterations. We consider~\eqref{eq:numerical-trend} for $r\in \{1, 2\}$. The data is generated as above with $N = 1000$, $n = 500$ and $SNR = 1$.
We compare the following methods:

\smallskip

$\bullet$ \textit{uFW (simple)}:  Algorithm \ref{CG-G method} with simple step-size rule \eqref{simple step-size rule}.

$\bullet$ \textit{uFW (linesearch)}: Algorithm \ref{CG-G method} with line-search rule \eqref{line-search step-size rule}.

$\bullet$ \textit{uAFW (linesearch)}: Algorithm \ref{CG-G with away steps} with line-search rule \eqref{line-search step-size rule}.

\smallskip

The corresponding gradient descent step-size $\eta$ is chosen as $1/\|A\|^2$ where $\|A\|$ is the operator norm of the data matrix $A$.

The left panel ($r=1$) in Figure \ref{figure: comparison of different methods} suggests that the away step variant: uAFW converges linearly to an optimal solution for the $\ell_1$ trend filtering problem with $r=1$, and it can reach very high accuracy. In contrast, uFW algorithms appear to converge at a sub-linear rate. Both these observations are consistent with our theoretical framework.  
The advantage of uAFW over uFW is less significant
in the right panel ($r=2$) in Figure \ref{figure: comparison of different methods}. We believe that this is due to the problem being very ill-conditioned for the case $r=2$---uAFW converges linearly, but with an unfavorable parameter for linear convergence that makes it progress slowly.   
Figure \ref{figure: comparison of different methods} also shows that uFW with line search step-size has a larger optimality gap compared to uFW with simple step-size. This is perhaps because the problem is ill-conditioned, and exact line-search may lead to conservative step-size. 
	However, the $G_k$-gap and $H_k^2$-gap of these methods are comparable. 
	Finally, note that in this example, $H_k^2$-gap is much smaller than $G_k$-gap and optimality gap. This is perhaps because the dimension of $T$ is low and the corresponding gradient steps in $T$ converge faster.

\smallskip

\noindent {\bf{Comparison with Benchmarks.}}
Table \ref{table: large instance r1}
compares uFW with two state-of-the-art conic optimization solvers:  MOSEK and SCS, for solving the $\ell_1$ trend filtering problem \eqref{eq:numerical-trend} with $r=1$ and $r=2$.
	We also compare with the accelerated projection free method for general unbounded constraint set~\cite{gonccalves2020projection} (denoted by APFA).

The data is generated by the procedure discussed above with $SNR=1$. 
	We use uFW with the simple step-size rule \eqref{simple step-size rule}, 
	and
	terminate uFW at iteration $k$ if $G_k/\max\{|f_k|,1\} <  10^{-4}$ and $H_k^2 /\max\{|f_k|,1\} < 10^{-4}$. 
	We run MOSEK and SCS with their default settings.
	For reference, Table~\ref{table: large instance r1} also reports the (relative) \textit{optimality gap}, denoted as uFW gap. 
	We set the termination tolerance of SCS as $10^{-3}$ and allow a maximum runtime of 2 hours.
		For APFA, we use Mosek to solve the linear oracles and the parameters are set following Section 4 of \cite{gonccalves2020projection}. Let $\hat f$ be the objective value computed by uFW upon termination, and $\wtd f_k$ be the objective value computed in iteration $k$ of APFA. 
		We terminate APFA at iteration $k$ if $(\wtd f_k - \hat f)/\max\{1,|\hat f|\} < 10^{-2}$, with a runtime limit of 2 hours.

	For each value of $r \in \{1, 2\}$, Table~\ref{table: large instance r1} presents results for three groups of problem sizes when $N\ll n$, $N\approx n$ and $N\gg n$. As we can see, the running time of uFW is significantly better than MOSEK and SCS for all the examples (one or two orders of magnitude better in most of the examples). 
	As the problem sizes grow, the runtimes of uFW appear to be stable. 
	MOSEK and SCS on the other hand, may fail to produce a reasonable solution (within the allocated maximum time-limit of 2hrs and/or memory). 
	We observe that when $G_k/\max\{|f_k|,1\} <  10^{-4}$ and $H_k^2 /\max\{|f_k|,1\} < 10^{-4}$, 
	the optimality gap of uFW is typically around $10^{-5} \sim 10^{-6}$. 
	Note that uFW takes longer to solve the problem with $r=2$ than the case $r=1$. This is because the problem with $r=2$ has a larger condition number.
	 It can be seen that the runtime of APFA is much longer than the other algorithms even for finding a low-accuracy solution. This is probably due to expensive linear optimization oracles\footnote{We need to minimize a linear function over the intersection of $T\oplus S$ and a Euclidean ball---as we are not aware of a simple (closed-form) solution to this, we solve this with Mosek.}.

	Finally, we note that MOSEK, SCS and uFW/uAFW are three different types of algorithms, each using different termination criteria. MOSEK is an interior-point solver, which naturally produces a high accuracy solution. SCS (based on ADMM) is a primal-dual first-order method, which may produce infeasible solutions, thus potentially negative primal-dual gaps (See Section 3.5 of \cite{o2016conic} for a detailed discussion on the termination criteria of SCS). uFW and uAFW are primal algorithms with feasible iterates, and the relative $(H_{k},G_{k})$-gaps provide a natural measure of solution quality. The runtime improvements in Table \ref{table: large instance r1} appear to suggest the advantage of uFW over competing methods.

\begin{figure}[h]
	\centering
	\scalebox{0.96}{\begin{tabular}{cc}
			$r=1$ & $r=2$\\
			\includegraphics[width=0.48\textwidth,trim={0 1.1cm 0 1cm},clip]{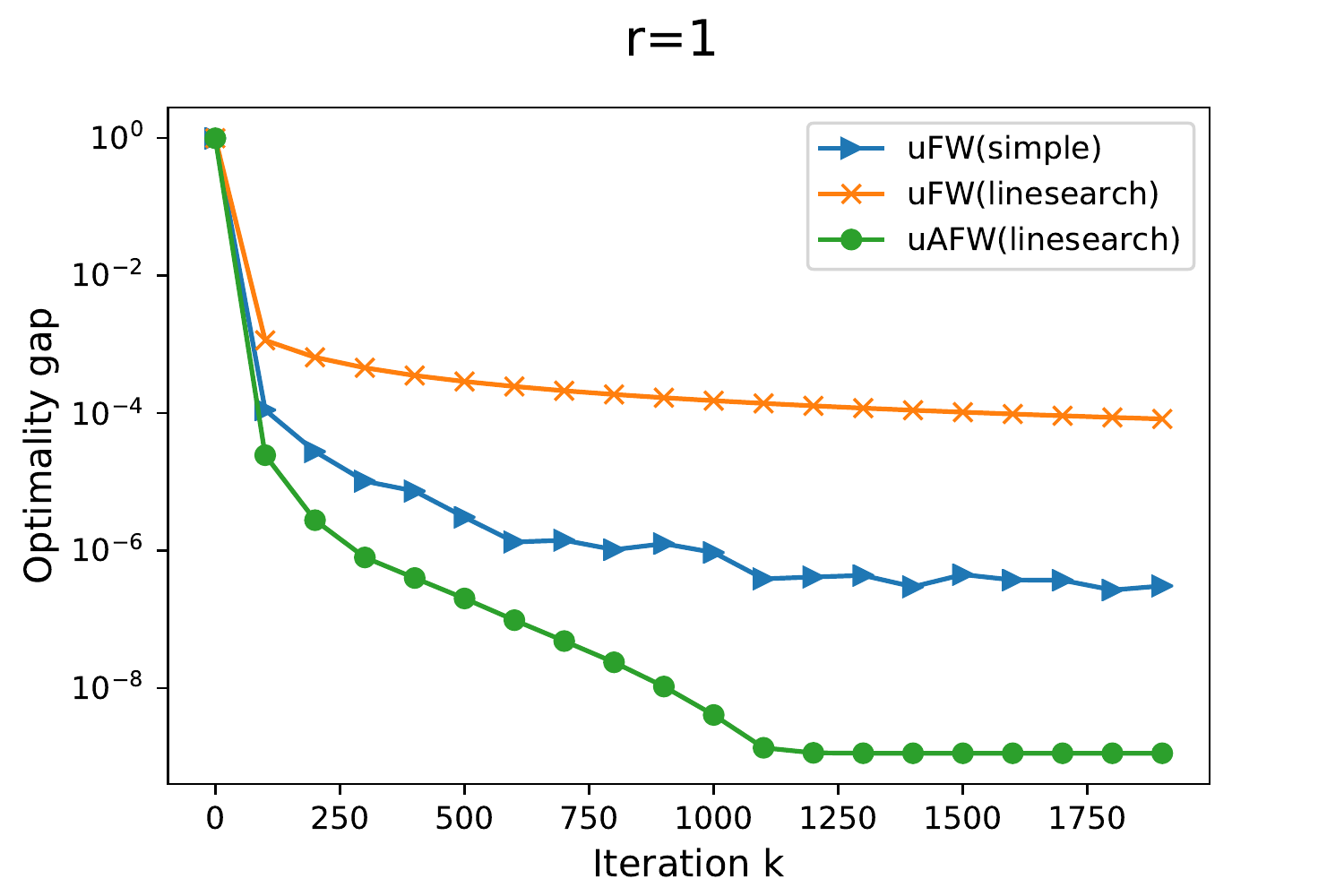} &
			\includegraphics[width=0.48\textwidth,trim={0 1.1cm 0 1cm},clip]{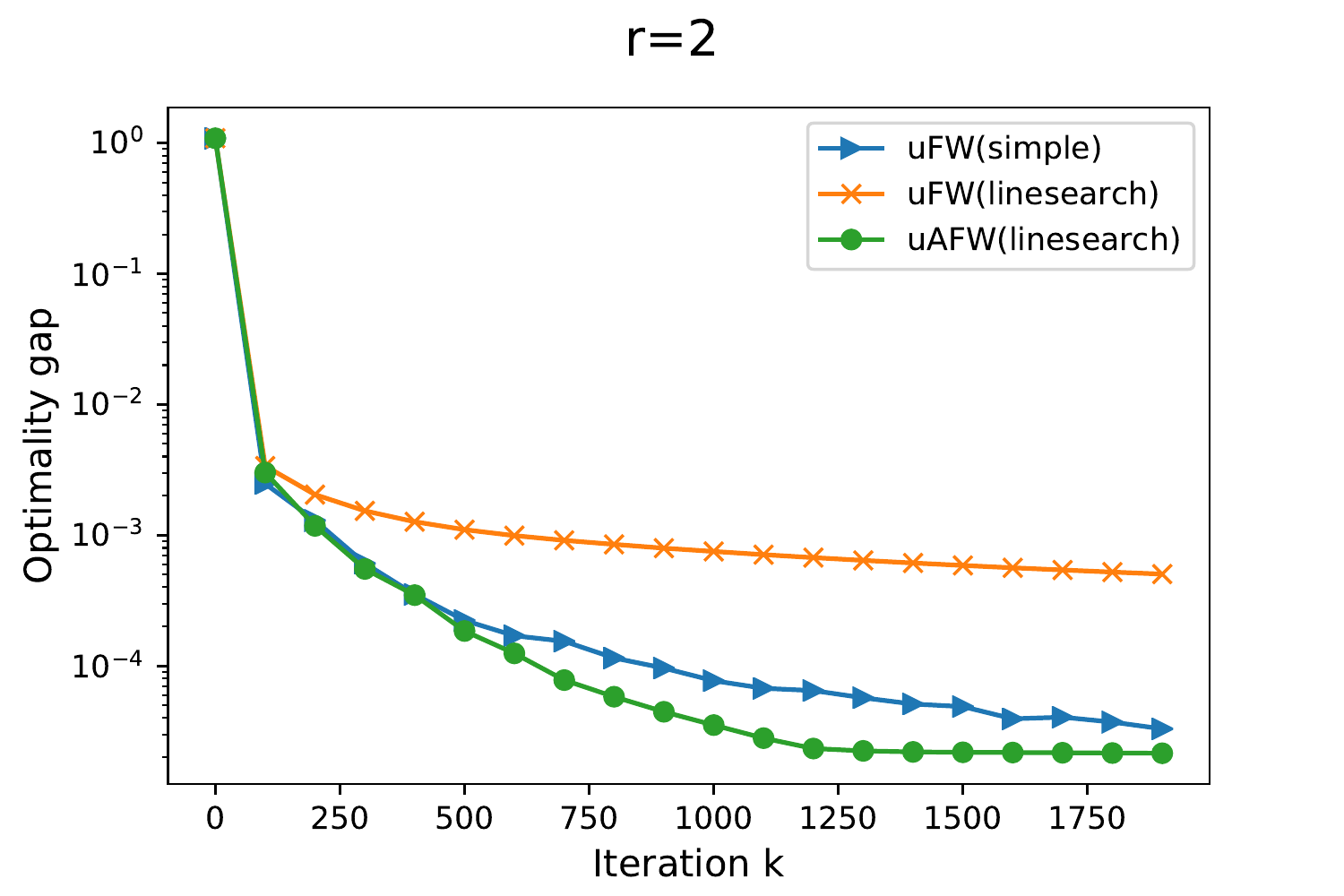} \\
			\includegraphics[width=0.48\textwidth,trim={0 1.1cm 0 1cm},clip]{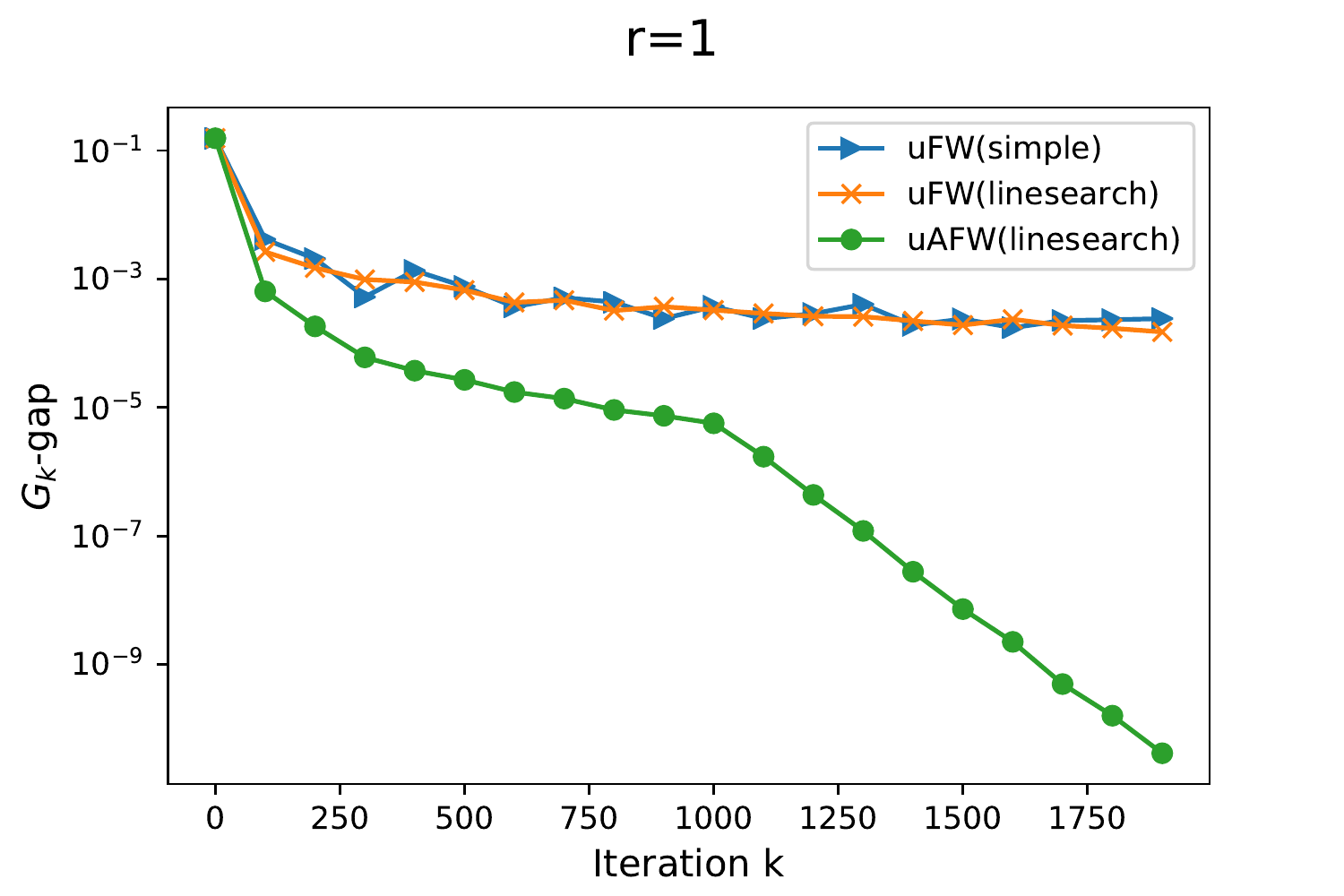} &
			\includegraphics[width=0.48\textwidth,trim={0 1.1cm 0 1cm},clip]{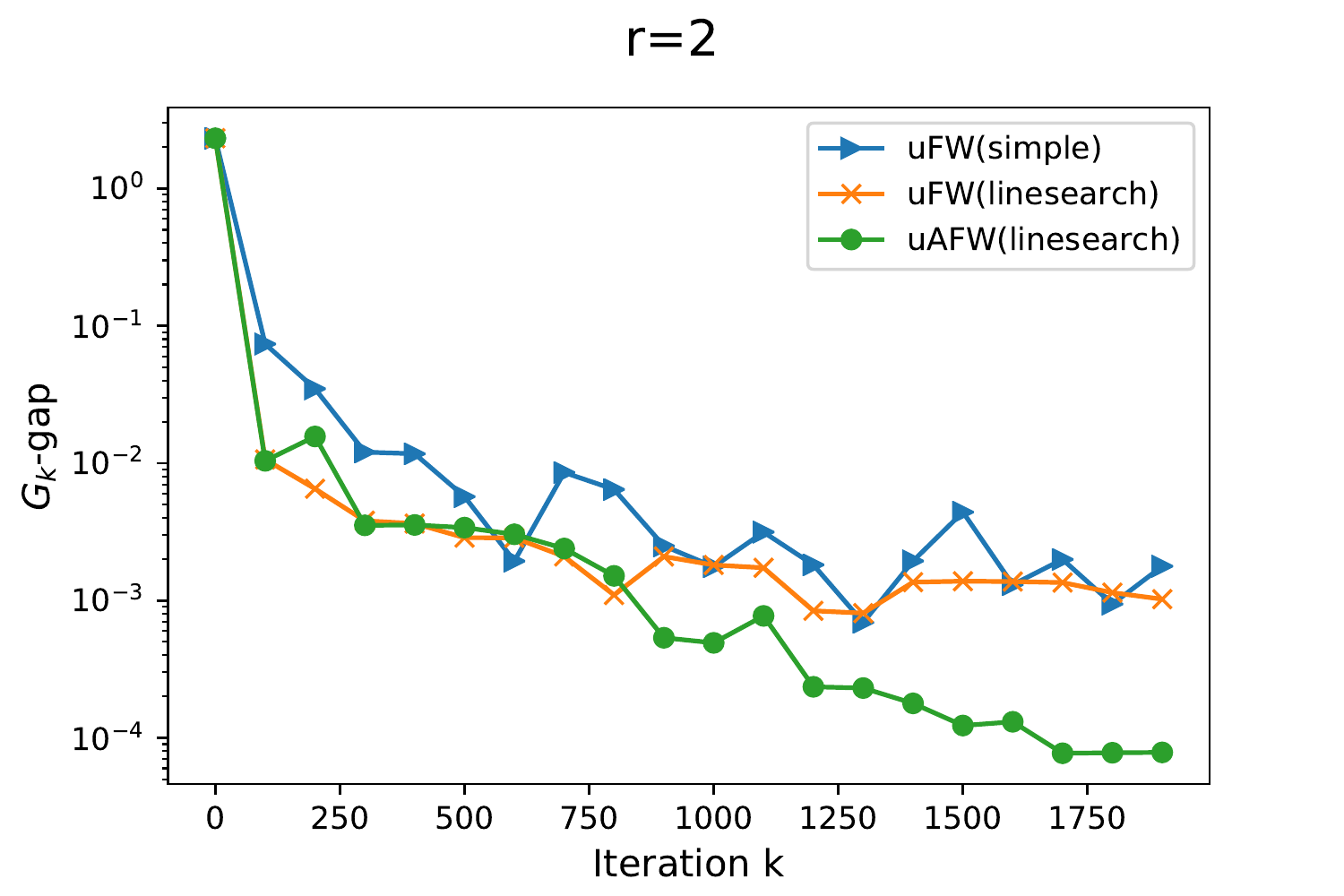} \\
			\includegraphics[width=0.48\textwidth,trim={0 0cm 0 1cm},clip]{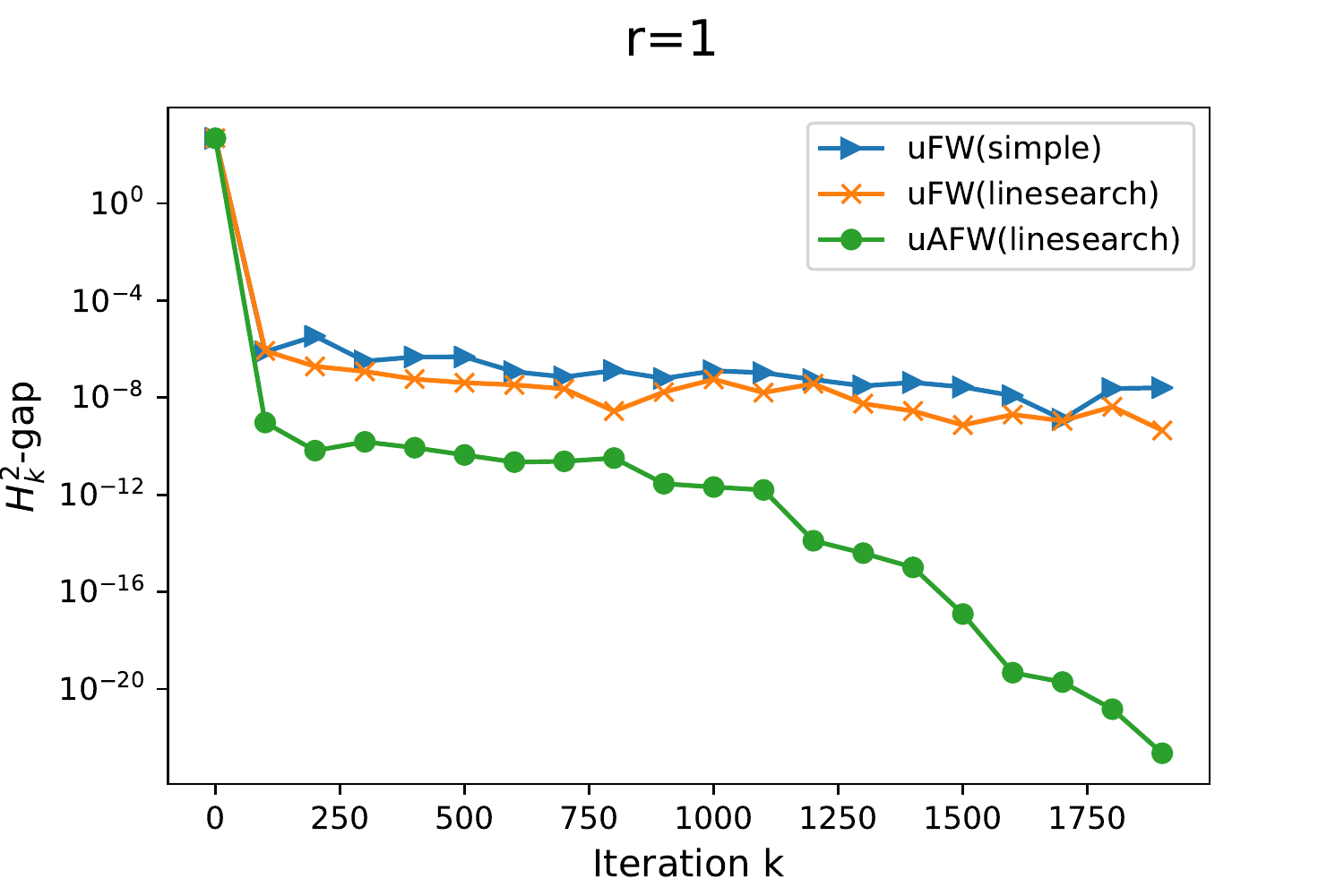} &
			\includegraphics[width=0.48\textwidth,trim={0 0cm 0 1cm},clip]{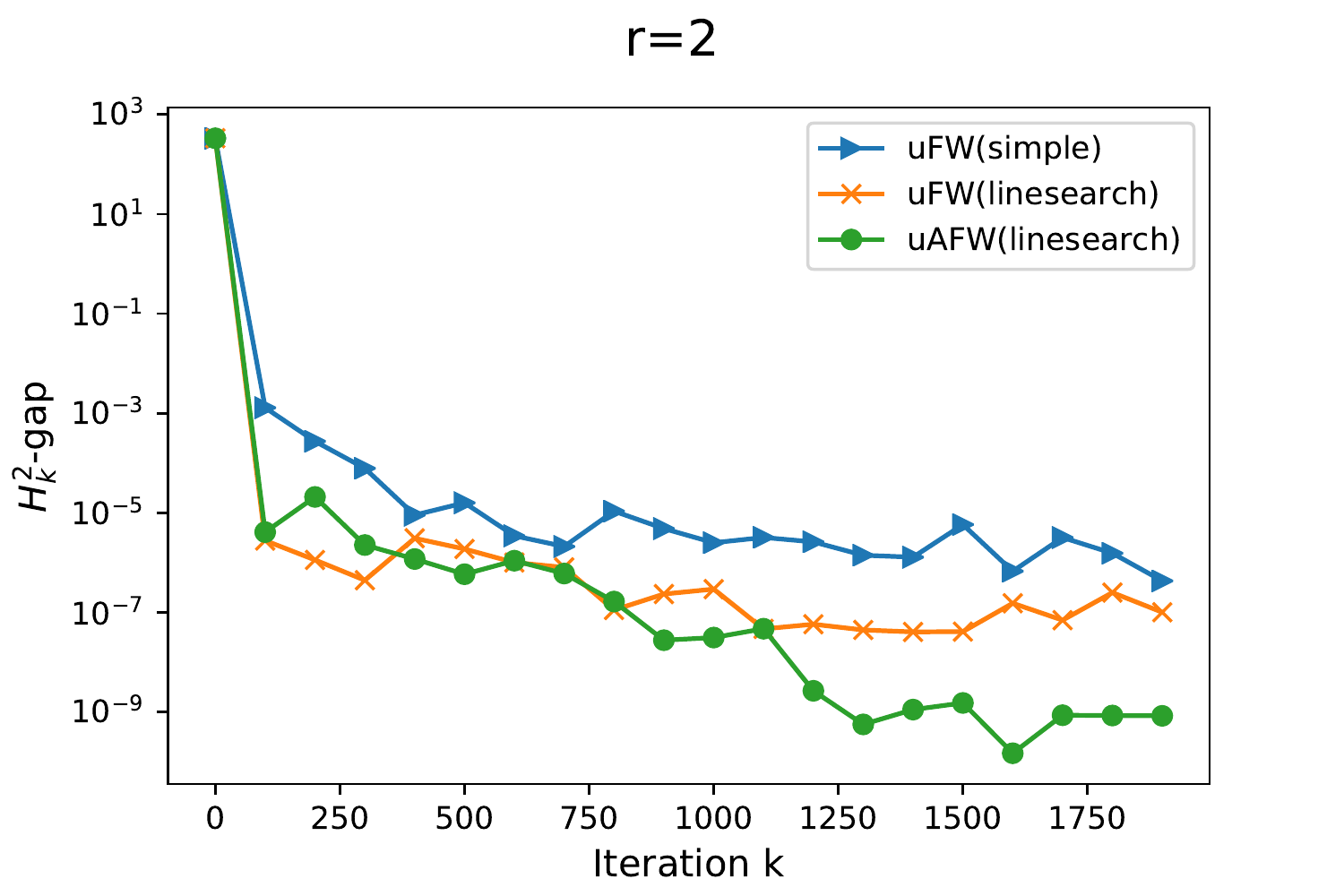} 
	\end{tabular}}
	\vspace{-1em}
	\caption{Plots showing the optimality gap, $G_k$-gap and $H^2_k$-gap obtained by uFW and uAFW versus the number of iterations (i.e., denoted by index $k$ in the algorithm descriptions) for solving the $\ell_1$ trend filtering problem \eqref{eq:numerical-trend} with $r=1$ and $r=2$, respectively.} 
	\label{figure: comparison of different methods}
\end{figure}

\begin{table}[h]
	\centering
	\caption{Table showing runtimes (in secs) for solving \eqref{eq:numerical-trend} by uFW, MOSEK and SCS with $r=1$ and $r=2$. For uFW we also display the optimality gap when $f^*$ is available. 
			The symbol ``-'' for MOSEK means the allocated memory was not sufficient. The symbol ``*'' means the method did not terminate in $2$ hours or the solver reported an error. 
			The symbol ``x'' means that MOSEK reported an error during execution.
	}
	\label{table: large instance r1}
	\scalebox{0.69}{
		\begin{tabular}{|c|c|ccccc|ccccc}
				\hline
				\multirow{3}{*}{N} & \multirow{3}{*}{n} & \multicolumn{5}{c|}{r=1}                                                 & \multicolumn{5}{c|}{r=2}                                                                      \\ \cline{3-12} 
				&                    & uFW     & uFW      & MOSEK   & SCS     & APFA & uFW     & uFW      & MOSEK   & SCS     & \multicolumn{1}{c|}{APFA} \\
				&                    & time(s) & gap      & time(s) & time(s) & time(s)                         & time(s) & gap      & time(s) & time(s) & \multicolumn{1}{c|}{time(s)}                         \\ \hline
				5K                 & 500                & 0.27    & 3.25e-07 & 3.42    & 10.70   & 226.17                          & 1.68    & 3.02e-06 & 2.39    & 53.19   & \multicolumn{1}{c|}{221.08}                          \\
				200K               & 1K                 & 7.62    & 7.04e-07 & 541.82  & 3879.35 & *                               & 28.82   & 2.98e-06 & 327.76  & 4114.53 & \multicolumn{1}{c|}{*}                               \\
				400K               & 1K                 & 17.48   & -        & -       & *       & *                               & 56.50   & -        & -       & *       & \multicolumn{1}{c|}{*}                               \\
				300K               & 2K                 & 21.32   & -        & -       & *       & *                               & 45.69   & -        & -       & *       & \multicolumn{1}{c|}{*}                               \\ \hline
				2K                 & 2K                 & 0.37    & 4.66e-07 & 7.17    & 43.14   & 939.62                          & 0.94    & 6.20e-06 & 7.74    & 97.12   & \multicolumn{1}{c|}{*}                               \\
				10K                & 10K                & 2.03    & 6.03e-07 & 302.14  & 2526.14 & *                               & 10.24   & 5.83e-07 & 448.85  & 3252.30 & \multicolumn{1}{c|}{*}                               \\
				15K                & 15K                & 4.36    & -        & -       & *       & *                               & 22.39   & -        & -       & *       & \multicolumn{1}{c|}{*}                               \\
				20K                & 20K                & 7.51    & -        & -       & *       & *                               & 48.14   & -        & -       & *       & \multicolumn{1}{c|}{*}                               \\ \hline
				500                & 5K                 & 0.23    & 1.14e-06 & 4.47    & 111.60  & *                               & 2.97    & 1.79e-06 & 4.72    & 109.12  & \multicolumn{1}{c|}{*}                               \\
				1K                 & 100K               & 6.29    & 2.45e-06 & 262.80  & 4763.89 & *                               & 48.29   & -        & x       & 4816.81 & \multicolumn{1}{c|}{*}                               \\
				1K                 & 200K               & 6.44    & -        & -       & *       & *                               & 101.54  & -        & -       & *       & \multicolumn{1}{c|}{*}                               \\
				2K                 & 300K               & 25.41   & -        & -       & *       & *                               & 173.91  & -        & -       & *       & \multicolumn{1}{c|}{*}                               \\ \hline
		\end{tabular}
	}
\end{table}

\subsection{Matrix Completion with Side Information}

Here we present computational results for the 
generalized matrix completion problem~\eqref{intro:generalized-nuclear-norm}. 

\smallskip

\noindent {\bf Data generation.}
The matrix of interest $B$ is generated from the model
\begin{equation}\label{MC-model}
	B = P_1 Z^\T + UV^{\T} + \cE
\end{equation} 
where $B\in \R^{m\times n}$, $P_1\in \R^{m\times r_1}$, $Z\in \R^{r_1 \times n}$, $U \in \R^{m\times r}$, $V\in \R^{n\times r}$ and $\cE \in \R^{m\times n}$. 
Here $P_1$ corresponds to the known side information about the column space of $B$, with coefficients $Z$; the low-rank component that is not included in this side information is $UV^\T$, and $\cE$ is the noise term. See \cite{Fithian2018fexible,eftekhari2018weighted} for more background on this statistical model.
The matrices $U, V, Z$ are generated independently with entries (iid) from $\cN(0,1)$. The matrix $P_1$ has unit-norm orthogonal columns, which is generated by taking the orthogonal basis for the columns of an $m\times r_1$ random matrix with iid standard Gaussian entries. The entries of $\cE$ are iid $\cN(0,\sigma^2)$.

We study the setting where only a subset of  the entries of $B$ are observed. 
Let $\Omega$ be the indices corresponding to the observed entries that are uniformly distributed across the matrix coordinates.
For a matrix $A\in \R^{m\times n}$, we let $\mathcal{P}_{\Omega} (A)$ be a matrix with entries in $\Omega$ being the same as $A$ and entries in $\Omega^c$ being zeros.
We estimate the signal ``$P_1 Z^{\T} + UV^{\T}$'' by solving the following problem (see \cite{Fithian2018fexible}):
\begin{equation}\label{MC-opt2}
	\min_{X}~ \|\mathcal{P}_{\Om} (X-B)\|_F^2~~~
	\stt~~~\|(I - P_1P_1^\T) X\|_* \le \delta \ ,
\end{equation}
which is a special case of~\eqref{intro:generalized-nuclear-norm} with $P = I_m - P_1 P_1^\T$, $Q= I_n$ and objective function $f(X) =\|\mathcal{P}_{\Om} (X-B)\|_F^2$ (here, $\| \cdot\|_F$ denotes the Frobenius norm).

\smallskip

\noindent
{\bf Computational environment.}
Our code is written in Matlab 2017, and we make use of Matlab built-in function \textit{svds} to compute the leading singular vector with a custom function for matrix-vector multiplication. We compare our proposal with SCS~\cite{o2016conic} (via CVX). 
Computations were performed on MIT Sloan's engaging cluster with 4 CPUs and 20GB RAM for each CPU. 
Our results are averaged over three independent experiments.

\smallskip

\noindent {\bf{Comparison with SCS.}} We compare the computation time of uFW and SCS on instances with $r=r_1=5$ and different values of $m$ and $n$, generated from the procedure stated above. 
We define the following three parameters for the experimental setting:

\noindent	$\bullet$ \textit{Signal-to-noise Ratio} (SNR): the ratio of signal variance vs noise variance, i.e., 
$$\text{SNR}:= \text{var}((P_1Z^\T + UV^\T)_{i,j}) / \text{var}(\cE_{i,j}) \ . $$ 

\noindent	$\bullet$
\textit{Non-zero ratio} (nnzr): the percentage of observed entries, i.e., $\text{nnzr} :=|\Omega | / (mn) $.

\noindent	$\bullet$
\textit{Relative diameter}~($\dt'$): the relative value of the parameter $\dt$ in \eqref{MC-opt2} with respect to the corresponding value of signal. That is, 
$$
\dt' := \dt / \| (I_m - P_1 P_1^\T ) UV^\T \|_* \ .
$$
We use uFW with the simple step size rule \eqref{simple step-size rule}. Let $f_k$ be the best objective value obtained in the first $k$ iterations of uFW. 
		We terminate 
	uFW when $G_k/\max\{|f_k|,1\} \le 3\times 10^{-3}$ and $H_k^2/\max\{|f_k|,1\} \le 3\times 10^{-3}$ and we set the SCS tolerance to be $3\times 10^{-3}$. For instances when SCS is able to output a solution, 
	we set $f^*$ as the objective value of the solution obtained by running SCS with a stricter tolerance $10^{-5}$, and define the (relative) \textit{optimality gap} of uFW as 
	$(f_k-f^*)/\max\{1,|f^*|\}$. 
	We also define the optimality gap of SCS as 
	$(\hat f-f^*)/\max\{1,|f^*|\}$, where $\hat f$ is the objective value for the SCS solution with tolerance $3\times 10^{-3}$. 
Note that SCS is an ADMM based solver, so the solution it obtains is not strictly feasible. To this end, we define the value SCS feasibility (\textit{SCS feas.}, in short) to be
$$
\text{SCS feas.}=({ \|  (I_m - P_1 P_1^\T )  \tilde{X} \|_* - \dt })/{\dt},
$$ 
where $\tilde{X}$ is the the solution obtained upon its termination. 

 Table \ref{table: MC} presents the comparison of uFW and SCS on runtime and solution accuracy. 
	As presented in Table \ref{table: MC}, the computational time of uFW is significantly less than that of SCS. For problems with a smaller $\dt'$ $(\approx 0.5)$, uFW is around $30 \sim 80$ times faster than SCS. 
	For problems with larger $\dt' (\approx 1)$, even though uFW gets slower, it is still about $3\sim 5$ times faster than SCS in runtime. 
	Moreover, upon termination, 
	uFW typically finds a solution with (relative) optimality gap less than $3\times 10^{-4}$, which appears to be more accurate than the solution found by SCS with tolerance $3\times 10^{-3}$ (even though these two solutions are not strictly comparable) -- the latter typically has a \textit{SCS feas.} larger than $3\times10^{-3}$.
Table \ref{table: MC} shows that the ``SCS feas." value is positive (i.e., the solution is not feasible) for all 
instances. This is the reason why some of the SCS gaps reported in Table \ref{table: MC} are negative.

\begin{table}[]
	\centering
	\caption{Comparison of uFW and SCS for solving \eqref{MC-opt2} with $r = r_1 = 5$. Here, ``gap'' refers to relative optimality gap as defined in the text. Note that SCS time is the runtime of SCS with tolerance $10^{-3}$.}
	\label{table: MC}
	\scalebox{0.83}{
		\begin{tabular}{|c|c|ccccc|}
				\hline
				& delta & uFW time & uFW gap & SCS time & SCS gap & SCS feas. \\ \hline 
				\multirow{3}{*}{\begin{tabular}[c]{@{}c@{}}m=700, n=700,\\ nnzr=0.3\end{tabular}}  
				& 0.5   &   18.18   &   9.63e-05  &  694.01    &  7.00e-05   &   4.71e-03    \\ 
				& 0.8   &   90.14   &   5.20e-05  &  785.85    &  6.99e-05   &   4.20e-03    \\ 
				& 1.0   &   194.47   &   1.01e-04  &  793.35    &  1.86e-04   &   5.34e-03     \\ \hline 
				\multirow{3}{*}{\begin{tabular}[c]{@{}c@{}}m=1000, n=500,\\ nnzr=0.3\end{tabular}}  
				& 0.5   &   13.33   &   7.81e-05  &  952.35    &  2.62e-04   &   1.68e-02    \\ 
				& 0.8   &   73.34   &   4.68e-05  &  1172.53    &  1.29e-04   &   6.20e-03    \\ 
				& 1.0   &   180.20   &   8.48e-05  &  1261.50    &  -2.21e-04   &   2.37e-03     \\ \hline 
				\multirow{3}{*}{\begin{tabular}[c]{@{}c@{}}m=300, n=3000,\\ nnzr=0.2\end{tabular}}  
				& 0.5   &   14.38   &   1.56e-04  &  1309.38    &  -6.96e-04   &   3.30e-02    \\ 
				& 0.8   &   243.56   &   2.56e-04  &  1912.09    &  8.20e-04   &   8.97e-03    \\ 
				& 1.0   &   299.84   &   1.88e-04  &  1735.94    &  2.81e-03   &   1.25e-02     \\ \hline 
		\end{tabular}
	}
\end{table}

Table \ref{table:MC-large-instances} presents more instances with larger values of $m$ and $n$ on which SCS fails to output a solution (for a tolerance of $3\times 10^{-3}$, as above). The termination rule for uFW is the same as mentioned above. Due to its mild per-iteration cost, uFW is able to solve these problems approximately within minutes to hours.

\begin{table}[h]
	\centering
	\caption{Running times of uFW for large instances on which SCS would not run.}
	\label{table:MC-large-instances}
	\begin{tabular}{|c|c|c|c|c|c|}
			\hline
			& $\delta$ & uFW time &                                                                                       & $\delta$ & uFW time \\ \hline 
			\multirow{3}{*}{\begin{tabular}[c]{@{}c@{}}m=1000, n=1000, \\ nnzr=0.2\end{tabular}} &    0.5      &   20.64       & \multirow{3}{*}{\begin{tabular}[c]{@{}c@{}}m=3000, n=300, \\ nnzr=0.2\end{tabular}}   &    0.5      &    16.91      \\ 
			&    0.8      &   100.75       &                                                                                       &    0.8      &    122.60      \\ 
			&     1.0     &   245.68       &                                                                                       &     1.0     &    399.60      \\ \hline 
			\multirow{3}{*}{\begin{tabular}[c]{@{}c@{}}m=3000, n=3000, \\ nnzr=0.1\end{tabular}} &    0.5      &   64.68       & \multirow{3}{*}{\begin{tabular}[c]{@{}c@{}}m=7000, n=7000, \\ nnzr=0.05\end{tabular}} &    0.5      &    212.86      \\ 
			&      0.8    &   609.70       &                                                                                       &    0.8      &    3520.89      \\ 
			&      1.0    &   1842.15       &                                                                                       &    1.0      &    7498.59      \\ \hline 
	\end{tabular}
\end{table}

\section{Acknowledgements} The authors would like to thank the AE and the Reviewers for their comments 	that led to improvements in the manuscript.

\appendix

\section{Proof of Proposition \ref{FW-contraction}}\label{sec:appendix-prop} The proof of Proposition \ref{FW-contraction} here closely follows the proof of Proposition 9 in \cite{gutman2019condition} with two major differences: (i)~we apply the analysis to the function $\bar f(\cdot , w)$ for a dynamically changing $w$; and (ii)~instead of the relative strong convexity to the constraint, we utilize a lower bound. Since these results do not appear in~\cite{gutman2019condition}, we present the full proof here.

We first introduce some new notations and establish three auxiliary lemmas (Lemmas~\ref{lemma: tdf-strong-convex}--\ref{square-bound-lemma-CGG}). 
Let $N$ be the number of vertices of $S$, and $\Dt_N$ be the standard simplex in $\R^N$, that is, $\Dt_N = \{x\in \R^N: x_i\ge0~\forall i\in [N],~ \boldsymbol{1}_N^\T x = 1\}$.
First, we introduce an auxiliary function $\wtd f$. Let $ B_1 \in \R^{(n-r)\times N}$ be the matrix whose columns are vertices of $S$, and let
$
B := \bigl[\begin{smallmatrix}
	B_1 & 0 \\ 0 & I_{r}
\end{smallmatrix} \bigr]\in \R^{n\times (N+r)}.
$
Define $\wtd f: \R^{N+r} \rightarrow \R \cup \{\infty\}$ such that $\wtd f:= \bar f \circ B$. Note that $\dom ( \wtd f)\supseteq \Dt_{N} \times \R^r$ and
recall the definition of $\psi(S)$ appearing in~\eqref{facial-distance-defn}.

Before analyzing the Frank-Wolfe steps using the function $\wtd f$, we need a result regarding the strong-convexity of $\wtd f$ in certain directions. For $\wtd u^0 \in \Dt_N$, we define 
$$
Z_{B_1, \Dt_N} (\wtd u^0) := \{  \wtd u \in \Dt_N | ~ B_1 \wtd u = B_1 \wtd u^0   \} \ .
$$
\begin{lemma}\label{lemma: tdf-strong-convex}
	Suppose $f$ satisfies Assumption \ref{awaystep-ass} and $\wtd f$ is as defined above. Define $ \wtd \mu =\mu \psi(S)^2 / 4 $. 
	Then
	for any 
	$(\wtd u^1, w), (\wtd u^2, w) \in \Dt_N \times \R^r$
	satisfying 
	\begin{eqnarray}\label{condition-u1-u2}
		\| \wtd u^2 - \wtd u^1 \|_1 = \inf_{\wtd u \in Z_{B_1, \Dt_N}(\wtd u^2)} \| \wtd u - \wtd u^1 \|_1  \ ,
	\end{eqnarray}
	it holds
	\begin{eqnarray}
		\wtd f(\wtd u^2 , w) \ge 
		\wtd f(\wtd u^1 , w) + \la
		\na_{\wtd u} \wtd f(\wtd u^1,w ), \wtd u^2 - \wtd u^1
		\ra + 
		({\wtd \mu}/{2})  \| \wtd u^2 - \wtd u^1 \|_1^2\ .\nonumber
	\end{eqnarray} 
\end{lemma}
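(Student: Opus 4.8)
The plan is to lift the strong convexity of $\bar f$ on $S$ through the linear parametrization $u = B_1\wtd u$ and then pay for the change of norm ($\|\cdot\|_2$ on $S$ against $\|\cdot\|_1$ on $\Delta_N$) using the facial distance $\psi(S)$. First I would record the chain rule $\na_{\wtd u}\wtd f(\wtd u, w) = B_1^\T \na_u \bar f(B_1\wtd u, w)$. Writing $u^i := B_1 \wtd u^i$, which lie in $S$ because $\wtd u^i \in \Delta_N$ and the columns of $B_1$ are the vertices of $S$, this gives $\wtd f(\wtd u^i,w) = \bar f(u^i,w)$ and $\la \na_{\wtd u}\wtd f(\wtd u^1,w), \wtd u^2 - \wtd u^1\ra = \la \na_u \bar f(u^1,w), u^2 - u^1\ra$. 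Since $\bar f$ is jointly $\mu$-strongly convex on $S\oplus T$ by Assumption~\ref{awaystep-ass}(4), its slice $\bar f(\cdot,w)$ is $\mu$-strongly convex in $u$ over $S$ with respect to $\|\cdot\|_2$; applied at $u^1,u^2\in S$ this yields
\[
\wtd f(\wtd u^2,w) \ge \wtd f(\wtd u^1,w) + \la \na_{\wtd u}\wtd f(\wtd u^1,w), \wtd u^2-\wtd u^1\ra + \tfrac{\mu}{2}\|u^2-u^1\|_2^2 .
\]
Since $\wtd\mu = \mu\psi(S)^2/4$, the lemma reduces to the purely geometric inequality $\|B_1(\wtd u^2-\wtd u^1)\|_2 \ge \tfrac{\psi(S)}{2}\|\wtd u^2-\wtd u^1\|_1$.

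Next I would set up this inequality. Put $h := \wtd u^2-\wtd u^1$ and split $h = h^+ - h^-$ into its nonnegative and nonpositive parts, with disjoint supports $S^+,S^-$. As $\bsone_N^\T h = 0$, we have $s := \bsone_N^\T h^+ = \bsone_N^\T h^-$ and $\|h\|_1 = 2s$; the bound is trivial when $s=0$, so assume $s>0$. Setting $p := B_1 h^+/s$ and $q := B_1 h^-/s$, these are convex combinations of the vertices indexed by $S^+$ and $S^-$ respectively, so $p,q\in S$ and $B_1 h = s(p-q)$. The inequality then amounts to $\|p-q\|_2 \ge \psi(S)$, and by the definition~\eqref{facial-distance-defn} of facial distance this follows once I exhibit a proper nonempty face $F$ of $S$ with $\{v_i : i\in S^+\}\subseteq F$ (so $p\in F$) and $\{v_i : i\in S^-\}\cap F = \emptyset$ (so $q\in\conv(V(S)\setminus F)$).

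Producing this separating face is the main obstacle, and it is exactly where the minimality hypothesis~\eqref{condition-u1-u2} enters. I would take $F$ to be the minimal face of $S$ containing the added vertices $\{v_i : i\in S^+\}$ and claim $S^-\cap F = \emptyset$. To see this, suppose some depleted vertex $v_j$, $j\in S^-$, lay in $F$; then I expect to produce a feasible perturbation direction $d$ (with $B_1 d = 0$, $\bsone_N^\T d = 0$, and $\wtd u^2 + t d \in \Delta_N$ for small $t>0$) that moves mass back onto coordinate $j$ and strictly decreases $\|\wtd u^2 + t d - \wtd u^1\|_1$, contradicting that $\wtd u^2$ attains the infimum in~\eqref{condition-u1-u2}. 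This is a first-order optimality argument for the $\ell_1$-projection of $\wtd u^1$ onto the representation polytope $\{\wtd u\in\Delta_N : B_1\wtd u = u^2\}$, and constructing $d$ together with verifying the strict decrease is the technical heart; it adapts the facial-distance estimate of~\cite{pena2019polytope} (as used in~\cite{gutman2019condition}) to the lifted function $\wtd f$. With the claim in hand, $F$ is proper and nonempty, since it contains the added vertices but none of the removed vertices, which are genuine vertices of $S$; hence $p\in F$ and $q\in\conv(V(S)\setminus F)$, and facial distance yields $\|p-q\|_2\ge\mathrm{dist}(F,\conv(V(S)\setminus F))\ge\psi(S)$. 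Therefore $\|B_1 h\|_2 = s\|p-q\|_2 \ge \psi(S)\,s = \tfrac{\psi(S)}{2}\|h\|_1$, which closes the reduction and completes the proof.
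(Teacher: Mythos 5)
Your proposal is correct in its overall structure but takes a genuinely different route from the paper's. The paper disposes of this lemma essentially by citation: it fixes $w$, observes $\wtd f_w = \bar f_w \circ B_1$, invokes Theorem 1 of \cite{gutman2019condition} to lower-bound the strong-convexity modulus of $\wtd f_w$ (under the minimality condition \eqref{condition-u1-u2}) by $\inf \mu\|B_1(\wtd u - \wtd v)\|_2^2\,/\,\|Z_{B_1,\Dt_N}(\wtd v)-\wtd u\|_1^2$, and then quotes Proposition 1 of \cite{gutman2018condition} to identify that infimum with $\mu\psi(S)^2/4$. You instead prove the underlying geometric inequality $\|B_1(\wtd u^2-\wtd u^1)\|_2 \ge \tfrac{\psi(S)}{2}\|\wtd u^2-\wtd u^1\|_1$ from scratch, which is more self-contained and makes visible exactly where \eqref{condition-u1-u2} is used. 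Your reduction is sound: slicing the joint $\mu$-strong convexity of Assumption \ref{awaystep-ass}(4) at fixed $w$, the chain rule through $B_1$, the decomposition $h=h^+-h^-$ with $p=B_1h^+/s$, $q=B_1h^-/s$, and the observation that a nonempty proper face $F$ containing $\{v_i: i\in S^+\}$ but no $v_j$, $j\in S^-$, yields $\|p-q\|_2\ge\psi(S)$ via \eqref{facial-distance-defn}. (The degenerate case $B_1h=0$ is indeed excluded by \eqref{condition-u1-u2}, which then forces $h=0$.)

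The one step you explicitly defer---constructing the perturbation direction $d$---is the crux, and it does go through; here is the missing piece. Let $F$ be the minimal face of $S$ containing $p$; since the weights $h_i^+/s$ are strictly positive on $S^+$, $F$ is also the minimal face containing $\{v_i:i\in S^+\}$ and $p\in\ri(F)$. If $v_j\in F$ for some $j\in S^-$, then for small $\epsilon>0$ the point $p':=(1+\epsilon)p-\epsilon v_j$ lies in $F\subseteq S$; writing $p'=\sum_k\mu_k v_k$ with $\mu\in\Dt_N$ gives $\wtd p':=\tfrac{1}{1+\epsilon}\mu+\tfrac{\epsilon}{1+\epsilon}e_j\in\Dt_N$ with $B_1\wtd p'=p$ and $\wtd p'_j\ge\tfrac{\epsilon}{1+\epsilon}$. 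Set $d:=\wtd p'-h^+/s$. Then $B_1d=0$, $\bsone_N^\T d=0$, and $d_i<0$ only possibly for $i\in S^+$, where $\wtd u^2_i\ge h_i>0$; hence $\wtd u^2+tsd\in Z_{B_1,\Dt_N}(\wtd u^2)$ for small $t>0$. The one-sided derivative of $t\mapsto\|h+tsd\|_1$ at $t=0^+$ is $s\bigl(\sum_{i\in S^+}d_i-\sum_{i\in S^-}d_i+\sum_{i:h_i=0}|d_i|\bigr)$, and since $d_i\ge0$ off $S^+$ this equals $s\bigl(\sum_i d_i-2\sum_{i\in S^-}d_i\bigr)=-2s\sum_{i\in S^-}d_i\le-2s\epsilon/(1+\epsilon)<0$, contradicting \eqref{condition-u1-u2}. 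This validates your separating face and completes the argument; what your approach buys over the paper's is a transparent, citation-free proof, at the cost of redoing the facial-distance estimate that \cite{gutman2018condition} already packages.
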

\begin{proof}
	Let $\wtd \mu'$ be the largest possible value such that
	\begin{eqnarray}\label{tdf-strong-convex-eq}
		\wtd f(\wtd u^2 , w) \ge 
		\wtd f(\wtd u^1 , w) + \la
		\na_{\wtd u} \wtd f(\wtd u^1,w ), \wtd u^2 - \wtd u^1
		\ra + 
		(\wtd \mu'/2) \| \wtd u^2 - \wtd u^1 \|_1^2
	\end{eqnarray}
	holds for all $\wtd u^1$ and $\wtd u^2$ satisfying \eqref{condition-u1-u2}.
	For $\wtd u , \wtd v \in \Dt_N$,
	we introduce the notation 
	\begin{eqnarray}
		\| Z_{B_1, \Dt_N} (\wtd v) - \wtd u \|_1 := \inf_{ \wtd v^1 \in Z_{B_1, \Dt_N} (\wtd v) } \| \wtd v^1 - \wtd u\|_1 \ .
		\nonumber
	\end{eqnarray}
	Fix $w\in \R^r$ and consider the functions $ \wtd f_w(\cdot) = \wtd f (\cdot , w) : \Dt_N \rightarrow \R $ and $  \bar f_w(\cdot) = \bar  f (\cdot , w) : S \rightarrow \R $, then it is immediate that $\wtd f_w = \bar f_w \circ B_1$, and \eqref{tdf-strong-convex-eq} is equivalent to
	\begin{eqnarray}
		\wtd f_w (\wtd u^2 ) \ge \wtd f_w (\wtd u^1 ) + \la  \na \wtd f_w(\wtd u^1), \wtd u^2 - \wtd u^1  \ra+ 
		({\wtd \mu'}/{2}) \| \wtd u^2 - \wtd u^1 \|_1^2 \ .
		\nonumber
	\end{eqnarray}
	It then follows from Theorem 1 of \cite{gutman2019condition}
	that
	\begin{eqnarray}\label{ineq20}
		\wtd \mu' \ge \inf_{\mbox{$\scriptsize{
					\begin{array}{c}
						\wtd u, \wtd v \in \Dt_N, \\
						\wtd u\notin Z_{B_1, \Dt_N} (\wtd v)
					\end{array}
				} $} }
		\frac{\mu \| B_1(\wtd u - \wtd v )\|_2^2}{\| Z_{B_1,\Dt_N}(\wtd v) - \wtd u \|_1^2} \ .
	\end{eqnarray}
	Furthermore, 
	by Proposition 1 of \cite{gutman2018condition}, the right hand side of \eqref{ineq20} equals $\mu \psi(S)^2 / 4 = \wtd \mu$. \end{proof}

	\begin{lemma}\label{lemma: simplex vector decompose}
		For any $a,b \in \Dt_N$, there exist $p,q\in \Dt_N$ such that 
		\begin{eqnarray}
			a-b = {\| a-b\|_{1}} (p-q) /2\quad {\rm{and}} \quad {\rm{supp}}(p) \subseteq {\rm{supp}}(a) \ ,
			\nonumber
		\end{eqnarray}
		where ${\rm{supp}}(a)$ and ${\rm{supp}}(p)$ denote the indices of nonzero coordinates of $a$ and $p$ respectively. 
	\end{lemma}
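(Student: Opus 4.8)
The plan is to work directly with the difference vector $d := a-b$ and exploit the fact that, since $a,b\in \Dt_N$, we have $\bsone_N^\T d = \bsone_N^\T a - \bsone_N^\T b = 0$; that is, the coordinates of $d$ sum to zero. I would then split $d$ into its positive and negative parts, $d^+$ and $d^-$, defined coordinatewise by $d_i^+ = \max\{d_i,0\}$ and $d_i^- = \max\{-d_i,0\}$, so that $d = d^+ - d^-$ and $\|d\|_1 = \bsone_N^\T d^+ + \bsone_N^\T d^-$.

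Next I would observe that the zero-sum property forces the two parts to carry equal mass: $\bsone_N^\T d^+ = \bsone_N^\T d^-$, and since their sum equals $\|d\|_1$, each is $m := \|d\|_1/2$. If $m=0$ then $a=b$ and I can simply take $p=q=a$, which trivially lies in $\Dt_N$ and satisfies the support condition; otherwise I set $p := d^+/m$ and $q := d^-/m$. A direct check shows $p,q\in\Dt_N$: both are nonnegative, and by construction $\bsone_N^\T p = \bsone_N^\T q = m/m = 1$.

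It then remains to verify the two claimed identities. For the decomposition, $p-q = (d^+-d^-)/m = d/m$, so $\tfrac{\|a-b\|_1}{2}(p-q) = m\cdot d/m = d = a-b$, as required. For the support condition, note that $\mathrm{supp}(p) = \{i : d_i>0\} = \{i : a_i > b_i\}$, and whenever $a_i > b_i \ge 0$ we necessarily have $a_i > 0$; hence $\mathrm{supp}(p)\subseteq\mathrm{supp}(a)$.

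There is essentially no serious obstacle here, as the result is elementary. The only points requiring mild care are handling the degenerate case $a=b$ separately (to avoid dividing by $m=0$) and noticing that the desired support inclusion is an automatic consequence of the nonnegativity of $b$: positive entries of the difference $a-b$ can occur only at coordinates where $a$ itself is positive.
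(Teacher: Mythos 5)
Your proof is correct and follows essentially the same route as the paper's: both take $p$ and $q$ to be the positive and negative parts of $a-b$ normalized by $\|a-b\|_1/2$, and use the zero-sum property of $a-b$ to show both land in $\Dt_N$. Your explicit verification of the support inclusion (via $a_i > b_i \ge 0 \Rightarrow a_i > 0$) is a small detail the paper leaves implicit, but the argument is the same.
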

	\begin{proof}
		For any vector $x\in \R^n$, let $x^+$ be the vector in $\R^n$ with $x_i^+= \max \lt\{ x_i,0 \rt\}$ and $x^- =  x^+ - x$.
		Assume $a\neq b$ (otherwise the conclusion is trivial). 
		Let 
		$$p:= 2  (a-b)^+ / \|a-b\|_1 ~~\text{and}~~q:= 2 (a-b)^- / \|a-b\|_1. $$
		Then we have
		$
		a-b = ({\| a-b\|_{1}}/{2}) (p-q)
		$~ and~ $\text{supp}(p) \subseteq \text{supp}(a)$. Note that
		\begin{eqnarray}
			1_N^\T p - 1_N^\T q = \frac{2}{\|a-b\|_1} (1^\T_N a- 1_N^\T b) = 0,~~~~~
			1_N^\T p + 1_N^\T q = \frac{2}{\|a-b\|_1} \|a-b\|_1 = 2 \ .
			\nonumber
		\end{eqnarray}
		As a result, it holds $1_N^\T p = 1_N^\T q =1$, hence $p,q \in \Dt_N$. 
	\end{proof}

	Lemma~\ref{square-bound-lemma-CGG} provides a key inequality that allows us to establish a contraction in optimality gap.
	Recall that we have defined
	$F: \R^{r} \rightarrow \R $ as $F(w) = \rminf_{u \in S} \bar f(u,w) $. 
	
	\smallskip
	
	\begin{lemma}\label{square-bound-lemma-CGG}
		Let $\{(u^k,w^k,d^k)\}_{k\ge0}$ be the iterates generated by Algorithm~\ref{Away-step CG-G (normal form)}. Then
		\begin{equation*}
			\la \na_u \bar f(u^k,w^{k+1}) , d^k \ra^2 \ge 2 \wtd \mu (\bar f(u^k,w^{k+1}) - F(w^{k+1}))\ .
		\end{equation*}
	\end{lemma}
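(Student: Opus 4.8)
The plan is to lift the Frank-Wolfe/away-step analysis to the probability simplex $\Dt_N$ through the auxiliary function $\wtd f$ and then exploit its directional strong convexity established in Lemma~\ref{lemma: tdf-strong-convex}. Write $g := \na_u \bar f(u^k,w^{k+1})$ for brevity, and let $\wtd u^k\in\Dt_N$ denote the barycentric weights of the current iterate, so $u^k = B_1 \wtd u^k$ and ${\rm supp}(\wtd u^k) = V(u^k)$. Since $F(w^{k+1}) = \min_{\wtd u\in\Dt_N}\wtd f(\wtd u,w^{k+1})$, I would first fix any simplex representation of a minimizer $u^*_{w^{k+1}}$ and then replace it by the representation $\wtd u^2$ closest to $\wtd u^k$ within $Z_{B_1,\Dt_N}(\cdot)$; every element of that set maps to $u^*_{w^{k+1}}$ under $B_1$, so $\wtd f(\wtd u^2,w^{k+1}) = F(w^{k+1})$ and the closeness condition~\eqref{condition-u1-u2} holds with $\wtd u^1 = \wtd u^k$. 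Applying Lemma~\ref{lemma: tdf-strong-convex} and rearranging then yields, with $\delta := \wtd u^2 - \wtd u^k$,
\[
\bar f(u^k,w^{k+1}) - F(w^{k+1}) \le -\la \na_{\wtd u}\wtd f(\wtd u^k,w^{k+1}), \delta\ra - \tfrac{\wtd\mu}{2}\|\delta\|_1^2 .
\]

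Next I would control the linear term via the decomposition in Lemma~\ref{lemma: simplex vector decompose}. Taking $a = \wtd u^k$ and $b = \wtd u^2$ gives $\wtd u^k - \wtd u^2 = \tfrac{\|\delta\|_1}{2}(p-q)$ with $p,q\in\Dt_N$ and ${\rm supp}(p)\subseteq V(u^k)$, hence $\delta = \tfrac{\|\delta\|_1}{2}(q-p)$. The $i$-th coordinate of $\na_{\wtd u}\wtd f(\wtd u^k,w^{k+1})$ equals $\la g, v_i\ra$ for the $i$-th vertex $v_i$ of $S$, so minimizing the linear functional over the simplex at a vertex gives $\la \na_{\wtd u}\wtd f, q\ra \ge \min_{s\in S}\la g,s\ra = \la g,s^k\ra$, while the support restriction gives $\la \na_{\wtd u}\wtd f, p\ra \le \max_{v\in V(u^k)}\la g,v\ra = \la g,v^k\ra$. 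Combining these,
\[
-\la \na_{\wtd u}\wtd f(\wtd u^k,w^{k+1}), \delta\ra \le \tfrac{\|\delta\|_1}{2}\big(\la g,v^k\ra - \la g,s^k\ra\big).
\]

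The crucial link to the chosen direction $d^k$ comes from Step (3) of Algorithm~\ref{Away-step CG-G (normal form)}: in either the Frank-Wolfe or the away case one has $-\la g, d^k\ra = \max\{\la g, u^k - s^k\ra,\ \la g, v^k - u^k\ra\}$, so the sum of the two nonnegative gaps obeys $\la g,v^k\ra - \la g,s^k\ra = \la g, u^k - s^k\ra + \la g, v^k - u^k\ra \le -2\la g, d^k\ra$. Substituting this into the two displays above gives $\bar f(u^k,w^{k+1}) - F(w^{k+1}) \le -\|\delta\|_1\la g,d^k\ra - \tfrac{\wtd\mu}{2}\|\delta\|_1^2$, and maximizing the right-hand side over the scalar $\|\delta\|_1\ge 0$ (a concave quadratic, optimized at $\|\delta\|_1 = -\la g,d^k\ra/\wtd\mu$) produces the upper bound $\la g,d^k\ra^2/(2\wtd\mu)$, which is exactly the claimed inequality.

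I expect the main obstacle to be the bookkeeping in the middle step rather than any hard estimate: one must align the two pieces of the simplex decomposition with the correct vertices—ensuring that $p$, whose support lies in $V(u^k)$, is matched against the away vertex $v^k$ and $q$ against the Frank-Wolfe vertex $s^k$—and verify that the chosen representation $\wtd u^2$ genuinely satisfies the closeness condition~\eqref{condition-u1-u2} so that Lemma~\ref{lemma: tdf-strong-convex} applies. Once these alignments are in place, the remainder is a one-dimensional quadratic optimization.
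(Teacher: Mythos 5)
Your proof is correct and follows essentially the same route as the paper: lifting to the simplex $\Dt_N$, invoking Lemma~\ref{lemma: tdf-strong-convex} with the $\ell_1$-closest representation of the minimizer, using Lemma~\ref{lemma: simplex vector decompose} to split the difference into a part supported on $V(u^k)$ (matched to $v^k$) and a part in $\Dt_N$ (matched to $s^k$), and bounding the sum of the two gaps by twice $-\la g,d^k\ra$. The only cosmetic difference is that you finish by maximizing the concave quadratic in $\|\delta\|_1$ rather than dividing by $\beta$ and applying AM--GM as the paper does, which is the same computation (and handles $\beta=0$ without a separate case).
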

	
	\begin{proof}
		Denote $u^*_{w^{k+1}} \in \argmin_{u\in S} \bar f(u,w^{k+1})$. Let $\{\wtd u^k\}_{k\ge0}$ be a sequence in $\Dt_{N}$ such that $ u^k =  B_1 \wtd u^k  $, and $\wtd u^*_{w^{k+1}} $ be a point in $\R^N$ such that 
		$$\wtd u^*_{w^{k+1}} \in \argmin\nolimits_{\td u \in \Dt_N} 
		\lt\{  \| \wtd u - \wtd u^k \|_1  \ \ | \   B_1 \wtd u =
		u^*_{w^{k+1}}\rt\} .$$
		Recall that $V(u^k)$ is the subset of vertices of $S$ corresponding to the support of $u^k$. 
		Let $\wtd V(u^k)$ be the subset of vertices of $\Dt_N$ corresponding to $V(u^k)$. 
		Then we know that $\wtd u^k = \sum_{\wtd v \in \wtd V(u^k)} \lam_{\wtd v} \wtd v$ where $\lam_{\wtd v} >0$. (We say that $\wtd u^k$ is supported on $\wtd V(u^k)$).
		By Lemma \ref{lemma: simplex vector decompose} with $a = \wtd u^k$ and $b= \wtd u^*_{w^{k+1}}$, there exist $\wtd p, \wtd q \in \Dt_N$ such that 
		\begin{eqnarray}\label{ineq3}
			\wtd u^k - \wtd u^*_{w^{k+1}} = ({\beta}/{2}) (\wtd p - \wtd q ) \ ,
		\end{eqnarray}
		where $\beta := \| \wtd u^k - \wtd u^*_{w^{k+1}} \|_1$, 
		and the support of $\wtd p$ is a subset of $\wtd V(u^k )$. 
		
		As a result,
		if we let $p =  B_1 \wtd p$ and $q =  B_1 \wtd q$, then  we have $p\in \text{conv}(V(u^k))$. In addition, since $1_N^\T \wtd q =1$ and the columns of $B_1$ correspond to the vertices of $S$, we have $q\in S$. Multiplying both sides of equality \eqref{ineq3} by $B_1$, we have
		\begin{equation}\label{eqn: tmp}
			u^k - u^*_{w^{k+1}} = ({\beta}/{2}) (p-q).
		\end{equation}
		With these results at hand, we get
		\begin{equation}\label{ineq4}
			\begin{aligned}
				&~~~~\la \na_u \bar f(u^k, w^{k+1}), u^k - u_{w^{k+1}}^* \ra\\
				&\mathop{=}\limits^{(i)}
				({\beta}/{2})	\la \na_u \bar f(u^k, w^{k+1}), p-q \ra \\
				&\mathop{\le}\limits^{(ii)}
				({\beta}/{2}) \Big(  \max_{u \in \text{conv}(V(u^k))} \la \na_u \bar f(u^k, w^{k+1}), u \ra - \min_{u\in S} \la \na_u \bar f(u^k, w^{k+1}), u \ra   \Big) \\
				&\mathop{=}\limits^{(iii)}
				({\beta}/{2}) \la \na_u \bar f(u^k, w^{k+1}) , v^k - s^k \ra 
			\end{aligned}
		\end{equation}
		where $(i)$ is from \eqref{eqn: tmp};
		$(ii)$ is from the fact $p\in {\rm conv }(V(u^k))$ and $q\in S$; $(iii)$ is from the definitions of $s^k$ and $v^k$ in Algorithm \ref{Away-step CG-G (normal form)}. 
		From the definition of $d^k$ in Algorithm~\ref{Away-step CG-G (normal form)}, we have 
		\begin{eqnarray}
			\la \na_u \bar f(u^k, w^{k+1}), d^k \ra &\le& (1/2)[\la\na_u \bar f(u^k, w^{k+1}) , s^k - u^k  \ra + \la\na_u \bar f(u^k, w^{k+1}) ,  u^k - v^k  \ra] \nonumber\\
			&=& (1/2)\la  \na_u \bar f(u^k, w^{k+1}) , s^k - v^k  \ra \ . \nonumber
		\end{eqnarray}
		Combining with \eqref{ineq4}, we have
		\begin{eqnarray}
			\la \na_u \bar f(u^k, w^{k+1}) , u^k - u^*_{w^{k+1}} \ra \le
			\frac{\beta}{2} \la \na_u \bar f(u^k, w^{k+1}) , v^k -s^k \ra
			\le - \beta \la \na_u \bar f(u^k, w^{k+1}), d^k \ra \ . \nonumber
		\end{eqnarray}
		On the other hand, from the strong convexity of $\wtd f$ in the first block (Lemma \ref{lemma: tdf-strong-convex}), we have 
		\begin{eqnarray}\label{ineq6}
			F(w^{k+1}) = \wtd f(\wtd u^*_{w^{k+1}} , w^{k+1}) &\ge& \wtd f(\wtd u^k, w^{k+1}) + \la \na_{\wtd u} \wtd f(\wtd u^k, w^{k+1}) , \wtd u^*_{w^{k+1}} - \wtd u^k \ra + \wtd \mu \beta^2/2 \nonumber\\
			& \mathop{=}\limits^{(i)}& 
			\bar f(u^k,w^{k+1} ) + \la \na_u \bar f(u^k, w^{k+1}), u^*_{w^{k+1}} - u^k \ra +  \wtd \mu \beta^2/2 \nonumber\\ &\mathop{\ge}& 
			\bar f(u^k,w^{k+1} )+ \beta  \la \na_u \bar f(u^k, w^{k+1}), d^k \ra +  \wtd \mu \beta^2/2  \ ,
		\end{eqnarray}
		where $(i)$ is because $\wtd f(\wtd u^k, w^{k+1}) = \bar f(B_1 \wtd u^k , w^{k+1}) = \bar f(u^k, w^{k+1})$ and
		\begin{align*}
			&\na_{\wtd u} \wtd f(\wtd u^k, w^{k+1}) = \na_{\wtd u} \bar f(B_1 \wtd u^k , w^{k+1}) = B_1^\T \na_{u} \bar f(u^k, w^{k+1}) \nonumber\\
			\Longrightarrow~~~&
			\la \na_{\wtd u} \wtd f(\wtd u^k, w^{k+1}) , \wtd u^*_{w^{k+1}} - \wtd u^k \ra = 	\la \na_u \bar f(u^k, w^{k+1}), B_1 \wtd u^*_{w^{k+1}} - B_1 \wtd u^k \ra \\
			& ~~~~~~~~~~~~~~= \la \na_u \bar f(u^k, w^{k+1}),  u^*_{w^{k+1}} -  u^k \ra \ .
		\end{align*}		
		Therefore, from \eqref{ineq6}, one has
		\begin{eqnarray}
			-	\la \na_u \bar f(u^k, w^{k+1}), d^k \ra &\ge& 
			(1/\beta) (\bar f(u^k, w^{k+1}) - F(w^{k+1})) + {\wtd \mu \beta}/{2}  \nonumber\\
			&\ge& 
			\sqrt{ 2\wtd \mu (\bar f(u^k, w^{k+1}) - F(w^{k+1}))  } \nonumber 
		\end{eqnarray}
		where the last step is by Cauchy-Schwarz inequality. 
	\end{proof}

Now we are ready to prove Proposition \ref{FW-contraction}:

\textbf{Proof of Proposition \ref{FW-contraction}}:
We prove Proposition \ref{FW-contraction} by discussing two different cases. 

\smallskip

\noindent
\textbf{(Case a)} 
This case includes two subcases: 

(Case a.1) $\al_k < \al_{\max}   $;

(Case a.2) $\al_k = \al_{\max}$, and iteration $k$ takes a FW step. 

When either (Case a.1) or (Case a.2) happens, we have $|V(u^{k+1}) | \le |V(u^k)|+1$. Furthermore, we claim that 
\begin{equation}\label{case-a-claim}
	\bar f(u^{k+1},w^{k+1}) \le  \min_{\al \in [0,1]} \lt\{ \bar f(
	u^k + \al d^k , w^{k+1}) \rt\}. 
\end{equation}
Indeed, when (Case a.2) happens, then $\al_{\max}=1$, so by the updating rule, \eqref{case-a-claim} holds true. When (Case a.1) happens, we have 
\begin{equation}
	\bar f(u^{k+1},w^{k+1}) = \min_{\al \in [0,\al_{\max}]} \lt\{ \bar f(
	u^k + \al d^k , w^{k+1}) \rt\} = \min_{\al \in [0,\infty)} \lt\{ \bar f(
	u^k + \al d^k , w^{k+1}) \rt\} \nonumber
\end{equation}
where, the first equality is from the updating rule of the algorithm, and the second equality is because $\al_k < \al_{\max}$ and $f$ is convex. 
As a result, claim \eqref{case-a-claim} holds true. 

From \eqref{case-a-claim} and the definition of $\bar C^S_{f,x^0}$ we have 
\begin{eqnarray}\label{ineq-case--a1}
	&& \bar f(u^{k+1},w^{k+1}) \nonumber\\
	&\le&
	\min_{\al \in [0,1]} \Big\{ \bar f(u^k,w^{k+1}) + \al \la \na_u \bar f(u^k,w^{k+1}) , d^k \ra + \bar C_{f,x^0}^S\al^2/2
	\Big\} \ .
\end{eqnarray}
Let $\bar \al_k$ be the optimal solution in \eqref{ineq-case--a1}. 
If $\bar \al_k <1$, then \eqref{ineq-case--a1} implies 
\begin{equation}\label{ineq-case--a2}
	\begin{aligned}
		\bar f(u^{k+1},w^{k+1}) &\le \bar f(u^k,w^{k+1})  - {\la \na_u \bar f(u^k,w^{k+1}) ,d^k \ra^2}/({2 \bar C_{f,x^0}^S}) \\
		&\le
		\bar f(u^k,w^{k+1})  - {\wtd \mu} \lt(\bar f(u^k,w^{k+1})  - F(w^{k+1})\rt) / \bar C_{f,x^0}^S
	\end{aligned}
\end{equation}
where the second inequality is by Lemma \ref{square-bound-lemma-CGG}. 
Recall that $\ga = \mu \psi(S)^2/(4\bar C^S_{f,x^0}) = \wtd \mu/\bar C^S_{f,x^0}$. 
As a result, we have
\begin{equation}\label{case-a--conclusion1}
	\bar f(u^{k+1},w^{k+1})  - f^* \le \lt( 1- \ga\rt) \lt( \bar f(u^k,w^{k+1})  - f^* \rt) + 
	\ga \lt(F(w^{k+1}) - f^*\rt).
\end{equation}

If $\bar \al_k =1$, then by taking derivative w.r.t $\al$ in \eqref{ineq-case--a1}, we know that
$$
- \la \na_u \bar f(u^k,w^{k+1}), d^k \ra \ge \bar C_{f,x^0}^S.
$$
Combining this with \eqref{ineq-case--a1} again we have 
\begin{equation}\label{eq2}
	\begin{aligned}
		\bar f(u^{k+1},w^{k+1}) &\le \bar f(u^k,w^{k+1}) + \min_{\al \in [0,1]} \lt\{  (\al - \al^2 /2) \la \na_u \bar f(u^k,w^{k+1}),d^k  \ra \rt\}  \\ 
		& =
		\bar f(u^k,w^{k+1}) + (1/2) \la \na_u \bar f(u^k,w^{k+1}),d^k  \ra \ ,
	\end{aligned}
\end{equation}
where the last equality is because $\la \na_u \bar f(u^k,w^{k+1}),d^k  \ra \le 0 $. 
On the other hand, 
\begin{equation}\label{ineq7}
	\begin{aligned}
		\la \na_u \bar f(u^k,w^{k+1}) , u^*_{w^{k+1}} - u^k \ra 
		&\mathop{\ge}\limits^{(i)} \la \na_u \bar f(u^k,w^{k+1})  ,s^k - u^k  \ra  \\
		& \mathop{\ge}\limits^{(ii)}
		\la \na_u \bar f(u^k,w^{k+1}) ,d^k  \ra
	\end{aligned}
\end{equation}
where $(i)$ is from the definition of $s^k $, and $(ii)$ is from the definition of $d^k$ in Algorithm~\ref{Away-step CG-G (normal form)}. 
Combining \eqref{ineq7} and \eqref{eq2}, we have
\begin{equation}
	\begin{aligned}
		\bar f(u^{k+1},w^{k+1} ) &\le 
		\bar f(u^k,w^{k+1})+ (1/2) \la \na_u \bar f(u^k,w^{k+1}), u^*_{w^{k+1}} - u^k \ra \\
		& \le 
		\bar f(u^k,w^{k+1})+  (1/2) \lt(F(w^{k+1}) - \bar f(u^k,w^{k+1})\rt) , 
	\end{aligned}
	\nonumber
\end{equation}
where the last inequality is from the convexity of $f$. As a result, we arrive at
\begin{equation}\label{case-a--conclusion2}
	\bar f(u^{k+1},w^{k+1}) - f^* \le 
	(1/2) \lt[ \bar f(u^k,w^{k+1}) - f^*  \rt] + 
	(1/2) \lt[F(w^{k+1})- f^*  \rt] \ .
\end{equation}
Recall that $\rho = \min\{\ga, 1/2\}$ and $F(w^{k+1}) \le \bar f(u^{k+1},w^{k+1})$, so combining \eqref{case-a--conclusion1} and \eqref{case-a--conclusion2} we have 
\begin{equation}\label{case-a--conclusion3}
	\bar f(u^{k+1},w^{k+1})  - f^* \le \lt( 1- \rho\rt) \lt( \bar f(u^k,w^{k+1})  - f^* \rt) + 
	\rho \lt(F(w^{k+1}) - f^*\rt) \ .
\end{equation}

\noindent
\textbf{(Case b):} $\al_k = \al_{\max}$ and iteration $k$ takes an away step. In this case, we have $ |V(u^{k+1})| \le |V(u^k)| -1 $. Since $|V(u^0)| = 1$ and $|V(u^k)| \ge 1$ for $k\ge1$, it follows that for each iteration when (Case b) occurs, there must have been at least one earlier iteration index at which  (Case a) occured. Hence in the first $k$ iterations, (Case b) occurs at most $k/2$ times.

Combining the discussions in (Case a) and (Case b), we reach the conclusion.

	\section{Proof of Proposition \ref{prop: stop-criterion-conv2}}\label{appendix sec: proof of prop-stop-criterion-conv2}
	First, by \eqref{combine2} (this inequality also holds for iterations generated by Algorithm \ref{CG-G with away steps}), we know 
	\begin{equation}
		\| \cP_T \na f(x^k) \|_2 \le \sqrt{(2/\eta) (f(x^k) - f(y^k))} \le 
		\sqrt{(2/\eta) (f(x^k) - f^* )}. 
	\end{equation}
	Combining the above with \eqref{ineq--new7} (and recalling, $\wtd H_k = \| \cP_{T} \na f(x^k) \|_2$), we have 
	\begin{equation}
		H_k \le (1+ \eta L^T_{f,x^0})\| \cP_T \na f(x^k)\|_2 \le (1+ \eta L^T_{f,x^0}) \sqrt{(2/\eta) (f(x^k) - f^* )}. 
	\end{equation}
	This completes the proof of   Part~(1). 
	
	To prove Part~(2), 
	similar to the proof of Theorem \ref{CGG-linear-conv}, we reduce the arguments to the axis-aligned case (Algorithm \ref{Away-step CG-G (normal form)}). Note that in this case, $x^k = (u^k, w^k)$ and $y^k = (u^k, w^{k+1})$. In the proof of Proposition \ref{FW-contraction}, we show that in the first $k$ iterations, there are at least $k/2$ iterations such that (Case a) happens. If (Case a) happens with $\bar \al_k<1$, then we have
	\begin{equation}\label{Gk-bound1}
		G_k \le - \la  \na f(y^k) , d^k \ra \le \sqrt{ 2\bar C_{f,x^0}^S (f(y^k) - f(x^{k+1})) } 
		\le \sqrt{ 2\bar C_{f,x^0}^S (f(y^k) - f^*) } 
	\end{equation}
	where the first inequality is by the definition of $d^k$, and the second is by the first inequality in \eqref{ineq-case--a2}. If (Case a) happens with $\bar \al_k = 1$, then we have 
	\begin{equation}\label{Gk-bound2}
		G_k \le - \la  \na f(y^k) , d^k \ra \le 2(f(y^k) - f(w^{k+1})) \le 2(f(y^k) - f^*) 
	\end{equation}
	where the second inequality is by \eqref{eq2}. Combining \eqref{Gk-bound1} and \eqref{Gk-bound2}, the proof of Part~(2) is complete.

	\bibliographystyle{plain}
\bibliography{mybib_FW_unbounded_cons2}

\begin{thebibliography}{10}

\bibitem{alaiz2013group}
C.~M. Ala{\'\i}z, A.~Barbero, and J.~Dorronsoro.
\newblock Group fused lasso.
\newblock In {\em International Conference on Artificial Neural Networks},
  pages 66--73. Springer, 2013.

\bibitem{andersen2000mosek}
E.~D. Andersen and K.~D. Andersen.
\newblock The mosek interior point optimizer for linear programming: an
  implementation of the homogeneous algorithm.
\newblock In {\em High performance optimization}, pages 197--232. Springer,
  2000.

\bibitem{angst2011generalized}
R.~Angst, C.~Zach, and M.~Pollefeys.
\newblock The generalized trace-norm and its application to
  structure-from-motion problems.
\newblock In {\em 2011 International Conference on Computer Vision}, pages
  2502--2509. IEEE, 2011.

\bibitem{beck2017linearly}
A.~Beck and S.~Shtern.
\newblock Linearly convergent away-step conditional gradient for non-strongly
  convex functions.
\newblock {\em Mathematical Programming}, 164(1-2):1--27, 2017.

\bibitem{bomze2019first}
Immanuel~M Bomze, Francesco Rinaldi, and Samuel~Rota Bulo.
\newblock First-order methods for the impatient: Support identification in
  finite time with convergent frank--wolfe variants.
\newblock {\em SIAM Journal on Optimization}, 29(3):2211--2226, 2019.

\bibitem{bomze2020active}
Immanuel~M Bomze, Francesco Rinaldi, and Damiano Zeffiro.
\newblock Active set complexity of the away-step frank--wolfe algorithm.
\newblock {\em SIAM Journal on Optimization}, 30(3):2470--2500, 2020.

\bibitem{bonnans1998optimization}
J.~F. Bonnans and A.~Shapiro.
\newblock Optimization problems with perturbations: A guided tour.
\newblock {\em SIAM review}, 40(2):228--264, 1998.

\bibitem{candes2009exact}
E.~J. Cand{\`e}s and B.~Recht.
\newblock Exact matrix completion via convex optimization.
\newblock {\em Foundations of Computational mathematics}, 9(6):717, 2009.

\bibitem{chiang2015matrix}
K.~Chiang, C.~Hsieh, and I.~Dhillon.
\newblock Matrix completion with noisy side information.
\newblock In {\em Advances in Neural Information Processing Systems}, pages
  3447--3455, 2015.

\bibitem{clarkson2010coresets}
K.~L. Clarkson.
\newblock Coresets, sparse greedy approximation, and the frank-wolfe algorithm.
\newblock {\em ACM Transactions on Algorithms (TALG)}, 6(4):1--30, 2010.

\bibitem{Demyanov1970approximate}
V.~Demyanov and A.~Rubinov.
\newblock {\em Approximate methods in optimization problems}.

\bibitem{diamond2016cvxpy}
S.~Diamond and S.~Boyd.
\newblock Cvxpy: A python-embedded modeling language for convex optimization.
\newblock {\em Journal of Machine Learning Research}, 17(1):2909--2913, 2016.

\bibitem{dunn1979rates}
J.~C. Dunn.
\newblock Rates of convergence for conditional gradient algorithms near
  singular and nonsingular extremals.
\newblock {\em SIAM Journal on Control and Optimization}, 17(2):187--211, 1979.

\bibitem{dunn1978conditional}
J.~C. Dunn and S.~Harshbarger.
\newblock Conditional gradient algorithms with open loop step size rules.
\newblock {\em Journal of Mathematical Analysis and Applications},
  62(2):432--444, 1978.

\bibitem{eftekhari2018weighted}
A.~Eftekhari, D.~Yang, and M.~B. Wakin.
\newblock Weighted matrix completion and recovery with prior subspace
  information.
\newblock {\em IEEE Transactions on Information Theory}, 64(6):4044--4071,
  2018.

\bibitem{Fithian2018fexible}
W.~Fithian and R.~Mazumder.
\newblock Flexible low-rank statistical modeling with missing data and side
  information.
\newblock {\em Statistical Science}, 33 No.2:238--260.

\bibitem{FW-origin}
M.~Frank and P.~Wolfe.
\newblock An algorithm for quadratic programming.
\newblock {\em Naval Research Logistics Quarterly}, 1956.

\bibitem{Freund-2016-analysis}
R.~M. Freund and P.~Grigas.
\newblock New analysis and results for the frank-wolfe method.
\newblock {\em Mathematical Programming}, 155:199–230, 2016.

\bibitem{Freund-inface}
R.~M. Freund, P.~Grigas, and R.~Mazumder.
\newblock An extended frank-wolfe method with in-face directions, and its
  application to low-rank matrix completion.
\newblock {\em SIAM Journal on Optimization}, 27:319–346, 2013.

\bibitem{garber2016linearly}
Dan Garber and Elad Hazan.
\newblock A linearly convergent variant of the conditional gradient algorithm
  under strong convexity, with applications to online and stochastic
  optimization.
\newblock {\em SIAM Journal on Optimization}, 26(3):1493--1528, 2016.

\bibitem{Golub1996matrix}
G.~Golub and C.~Van~Loan.
\newblock {\em Matrix Computations (3rd Ed.)}.
\newblock Johns Hopkins University Press, Baltimore, MD, USA, 1996.

\bibitem{gonccalves2020projection}
Max~LN Gon{\c{c}}alves, Jefferson~G Melo, and Renato~DC Monteiro.
\newblock Projection-free accelerated method for convex optimization.
\newblock {\em Optimization Methods and Software}, pages 1--27, 2020.

\bibitem{guelat1986some}
J.~Gu{\'e}lat and P.~Marcotte.
\newblock Some comments on wolfe's ‘away step’.
\newblock {\em Mathematical Programming}, 35(1):110--119, 1986.

\bibitem{gutman2019condition}
D.~H. Gutman and J.~F. Pena.
\newblock The condition number of a function relative to a set.
\newblock {\em Mathematical Programming}, pages 1--40, 2020.

\bibitem{gutman2018condition}
David~H Gutman and Javier~F Pena.
\newblock The condition of a function relative to a polytope.
\newblock {\em arXiv preprint arXiv:1802.00271}, 2018.

\bibitem{harchaoui2015conditional}
Z.~Harchaoui, A.~Juditsky, and A.~Nemirovski.
\newblock Conditional gradient algorithms for norm-regularized smooth convex
  optimization.
\newblock {\em Mathematical Programming}, 152(1-2):75--112, 2015.

\bibitem{hazan2008sparse}
E.~Hazan.
\newblock Sparse approximate solutions to semidefinite programs.
\newblock In {\em Latin American symposium on theoretical informatics}, pages
  306--316. Springer, 2008.

\bibitem{jaggi2013revisiting}
M.~Jaggi.
\newblock Revisiting frank-wolfe: Projection-free sparse convex optimization.
\newblock In {\em Proceedings of the 30th international conference on machine
  learning}, pages 427--435, 2013.

\bibitem{johnson2013dynamic}
N.~Johnson.
\newblock A dynamic programming algorithm for the fused lasso and
  $l_0$-segmentation.
\newblock {\em Journal of Computational and Graphical Statistics},
  22(2):246--260, 2013.

\bibitem{Boyd-trend-filter}
S.~Kim, K.~Koh, S.~Boyd, and D.~Gorinevsky.
\newblock $l_1$ trend filtering.
\newblock {\em Siam Review}, 51. No. 2:339--360.

\bibitem{lacoste2015global}
S.~Lacoste-Julien and M.~Jaggi.
\newblock On the global linear convergence of frank-wolfe optimization
  variants.
\newblock In {\em Advances in neural information processing systems}, pages
  496--504, 2015.

\bibitem{lan2013complexity}
Guanghui Lan.
\newblock The complexity of large-scale convex programming under a linear
  optimization oracle.
\newblock {\em arXiv preprint arXiv:1309.5550}, 2013.

\bibitem{lan2020first}
Guanghui Lan.
\newblock {\em First-order and Stochastic Optimization Methods for Machine
  Learning}.
\newblock Springer, 2020.

\bibitem{lan2016conditional}
Guanghui Lan and Yi~Zhou.
\newblock Conditional gradient sliding for convex optimization.
\newblock {\em SIAM Journal on Optimization}, 26(2):1379--1409, 2016.

\bibitem{Mazumder2010spectral}
R.~Mazumder, T.~Hastie, and R.~Tibshirani.
\newblock Spectral regularization algorithms for learning large incomplete
  matrices.
\newblock {\em Journal of Machine Learning Research}, 11:2287--2322.

\bibitem{Nesterov-intro-book}
Y.~Nesterov.
\newblock {\em Introductory Lectures on Convex Optimization}.
\newblock Kluwer Academic Publishers, 2003.

\bibitem{scs}
B.~O'Donoghue, E.~Chu, N.~Parikh, and S.~Boyd.
\newblock {SCS}: Splitting conic solver, version 2.1.2.
\newblock \url{https://github.com/cvxgrp/scs}, November 2019.

\bibitem{o2016conic}
B.~O’Donoghue, E.~Chu, N.~Parikh, and S.~Boyd.
\newblock Conic optimization via operator splitting and homogeneous self-dual
  embedding.
\newblock {\em Journal of Optimization Theory and Applications},
  169(3):1042--1068, 2016.

\bibitem{Parikh2014proximal}
N.~Parikh, S.~Boyd, et~al.
\newblock Proximal algorithms.
\newblock {\em Foundations and Trends{\textregistered} in Optimization},
  1(3):127--239, 2014.

\bibitem{pedregosa2020linearly}
Fabian Pedregosa, Geoffrey Negiar, Armin Askari, and Martin Jaggi.
\newblock Linearly convergent frank-wolfe with backtracking line-search.
\newblock In {\em International Conference on Artificial Intelligence and
  Statistics}, pages 1--10. PMLR, 2020.

\bibitem{pelckmans2005convex}
K.~Pelckmans, J.~De~Brabanter, J.~Suykens, and B.~De~Moor.
\newblock Convex clustering shrinkage.
\newblock In {\em PASCAL Workshop on Statistics and Optimization of Clustering
  Workshop}, 2005.

\bibitem{pena2019polytope}
J.~Pena and D.~Rodriguez.
\newblock Polytope conditioning and linear convergence of the frank--wolfe
  algorithm.
\newblock {\em Mathematics of Operations Research}, 44(1):1--18, 2019.

\bibitem{pena2016neumann}
J.~Pena, D.~Rodr{\'\i}guez, and N.~Soheili.
\newblock On the von neumann and frank--wolfe algorithms with away steps.
\newblock {\em SIAM Journal on Optimization}, 26(1):499--512, 2016.

\bibitem{polyak1987introduction}
B.~T. Polyak.
\newblock Introduction to optimization.
\newblock {\em Optimization software, Inc., New York}, 1, 1987.

\bibitem{ramdas2016fast}
A.~Ramdas and R.~J. Tibshirani.
\newblock Fast and flexible admm algorithms for trend filtering.
\newblock {\em Journal of Computational and Graphical Statistics},
  25(3):839--858, 2016.

\bibitem{rockafellar1970convex}
R.~T. Rockafellar.
\newblock {\em Convex analysis}.
\newblock Princeton university press, 1970.

\bibitem{rudin1992nonlinear}
L.~I. Rudin, S.~Osher, and E.~Fatemi.
\newblock Nonlinear total variation based noise removal algorithms.
\newblock {\em Physica D: nonlinear phenomena}, 60(1-4):259--268, 1992.

\bibitem{srebro2010collaborative}
N.~Srebro and R.~R. Salakhutdinov.
\newblock Collaborative filtering in a non-uniform world: Learning with the
  weighted trace norm.
\newblock In {\em Advances in Neural Information Processing Systems}, pages
  2056--2064, 2010.

\bibitem{fused-lasso}
R.~Tibshirani, M.~Saunders, S.~Rosset, J.~Zhu, and K.~Knight.
\newblock Sparsity and smoothness via the fused lasso.
\newblock {\em Journal of the Royal Statistical Society: Series B}, 67:91--108.

\bibitem{gen-lasso}
R.~Tibshirani and J.~Taylor.
\newblock The solution path of the generalized lasso.
\newblock {\em Annals of Statistics}, 39(3):1335--1371.

\bibitem{Ryan-trend-filter}
R.~J. Tibshirani.
\newblock Adaptive piecewise polynomial estimation via trend filtering.
\newblock {\em Annals of Statistics}, 42(1):285--323.

\bibitem{wahlberg2012admm}
B.~Wahlberg, S.~Boyd, M.~Annergren, and Y.~Wang.
\newblock An admm algorithm for a class of total variation regularized
  estimation problems.
\newblock {\em IFAC Proceedings Volumes}, 45(16):83--88, 2012.

\bibitem{wolfe1970convergence}
P.~Wolfe.
\newblock Convergence theory in nonlinear programming.
\newblock {\em Integer and nonlinear programming}, pages 1--36, 1970.

\end{thebibliography}

\end{document}